\documentclass{amsart}
\usepackage[T1]{fontenc}
\usepackage[latin9]{inputenc}
\usepackage{amsmath,amsthm,amsfonts,amscd,amssymb,eucal,latexsym,mathrsfs}
\usepackage{stmaryrd}
\usepackage{enumerate}
\usepackage{hyperref}
\usepackage[all]{xy}

\usepackage{tikz,tikz-cd}
\usetikzlibrary{shapes.geometric}
\usepackage{tqft}

\usetikzlibrary{positioning}
\usepackage{float}
\usepackage{MnSymbol}
\usetikzlibrary{matrix}

\makeatletter
\renewcommand\part{\@startsection {part}{1}{\z@}%
                                   {-3.5ex \@plus -1ex \@minus -.2ex}%
                                   {2.3ex \@plus.2ex}%
                                   {\newpage\centering\normalfont\bfseries}}
\makeatother

\theoremstyle{plain}
\newtheorem{theorem}{Theorem}
\newtheorem*{theorem*}{Theorem}

\newtheorem{lemma}[theorem]{Lemma}

\newtheorem{proposition}[theorem]{Proposition}

\theoremstyle{definition}
\newtheorem{remark}[theorem]{Remark}

\newtheorem{definition}[theorem]{Definition}

\newcommand{\acts}{\curvearrowright}
\newcommand{\G}{\Gamma}

\newcommand{\IR}{\mathbb{R}}

\newcommand{\IZ}{\mathbb{Z}}

\newcommand{\ZI}{\mathbb{Z}}

\newcommand{\SI}{\mathbb{S}}

\DeclareMathOperator{\ran}{\mathrm{ran}}

\DeclareMathOperator{\del}{\partial}

\DeclareMathOperator{\Aut}{\mathrm{Aut}}

\DeclareMathOperator{\impl}{\Rightarrow}
\DeclareMathOperator{\inj}{\hookrightarrow}

\DeclareMathOperator{\surj}{\twoheadrightarrow}

\DeclareMathOperator{\normal}{\triangleleft}

\newcommand{\IP}{\mathbb{P}}

\newcommand{\GL}{\mathrm{GL}}

\DeclareMathOperator{\Stab}{\mathrm{Stab}}

\DeclareMathOperator{\Inn}{Inn}

\newcommand{\matr}[4]{\left(\begin{array}{cc} #1 & #2 \\ #3 & #4\end{array}\right)}

\newcommand{\sC}{\mathcal{C}}



\newcommand{\MF}{\mathbf{Flats}}
\newcommand{\MP}{\mathbf{Puzzles}}

\title[$\Aut(F_2\mathbf)$ puzzles]{$\mathbf{Aut(F}_2\mathbf)$ puzzles}

\author{Sylvain Barr\'e}
\author{Mikael Pichot}

\begin{document}

\begin{abstract}
This paper studies $\Aut(F_2)$ and groups closely related to it  from a geometric perspective.

The discussion is centered around geometric structures called ``ring puzzles'' and ``ring complexes'', and groups which can be seen as ``randomizations'' of the group $\Aut(F_2)$, in the sense of \cite{random}, and can be investigated using these structures. 
\end{abstract}

\maketitle

In a recent paper \cite{random} we discussed a class of ``random groups of higher rank''. These random groups come in various models, including in particular a ``density model'' (as in the classical construction of Gromov), and they are associated with ``deterministic data'' that are used to seed the construction.

 The deterministic data in \cite{random} was associated with nonarchimedean lattices, which were chosen in particular for their relations to rank interpolation, and the question of whether
\[
\IR^2\inj X\stackrel{\text{?}}\impl \ZI^2\inj \G
\] 
whenever $\G\acts X$ properly, $X$ is nonpositively curved and $X/\G$ compact. The so-called ``flat closing conjecture'' asserts that this has a positive answer. 

The present paper uses deterministic data associated with $\Aut(F_2)$, which produces random groups which ``resemble $\Aut(F_2)$'' in a geometrical sense.

We prove that: 

\begin{theorem}\label{T - randomization autf2}
The random group in the density models associated with $\Aut(F_2)$ satisfies the flat closing conjecture. 
\end{theorem}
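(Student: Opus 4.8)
The plan is to notice first that the statement ``$\G$ satisfies the flat closing conjecture'' is precisely the implication $\IR^2\inj X\impl\IZ^2\inj\G$, quantified over all proper cocompact actions of $\G$ on a nonpositively curved $X$; hence it is enough to prove that the random group $\G=\G_\omega$ contains a copy of $\IZ^2$, since then every instance of that implication holds because its conclusion does. The seed is already present in the deterministic data: $\Aut(F_2)$ contains $\IZ^2$, for instance $\langle\varphi,\iota_b\rangle$ with $\varphi\colon a\mapsto ab,\ b\mapsto b$ and $\iota_b\colon x\mapsto bxb^{-1}$ the inner automorphism by $b$. Indeed $\varphi$ fixes $b$, so $\varphi$ and $\iota_b$ commute; $\varphi$ has infinite order; and no nontrivial power of $\varphi$ is inner, since a power $\varphi^n$ fixing $b$ would be conjugation by an element of $\langle b\rangle$, which cannot carry $a$ to $ab^n$ unless $n=0$. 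So the whole task is to push this $\IZ^2$ (or a commensurable one) through the randomization.

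\textbf{Survival of the flat under randomization.} Recall from the construction --- the ring puzzles and ring complexes, in the spirit of \cite{random} --- that $\G_\omega$ is obtained from the $\Aut(F_2)$ data by imposing random relators at a large scale $N$, and that, for density $d$ below the threshold built into the construction, $\G_\omega$ acts freely, properly and cocompactly on a simply connected $\mathrm{CAT}(0)$ ring complex $X_\omega$. Carry the $\IZ^2$ above through the deterministic data and realise it as the stabiliser of a $2$-flat $P$ in the pre-randomization ring complex $X$; being an isometrically embedded plane, $P$ is convex, and $\IZ^2$ acts cocompactly on it by the Flat Torus Theorem. The decisive point is then an Ollivier-type isoperimetric/genericity estimate in the density model: a random relator is a long random word, and the probability that it shares a subword of length $\geq k$ with the convex subcomplex $P$ decays exponentially in $k$, so a union-bound / Borel--Cantelli argument shows that for a generic $\omega$ no random relator meets $P$ in a long piece. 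Consequently the relators neither distort nor fill into $P$: it descends to a convex flat in $X_\omega$, the natural homomorphism $\IZ^2\to\G_\omega$ is injective, and so $\IZ^2\inj\G_\omega$, which gives the flat closing conjecture for $\G_\omega$.

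\textbf{The full picture, and the main obstacle.} To see that this verification is genuine and not vacuous, one can classify \emph{every} $2$-flat of $X_\omega$ by puzzle combinatorics: such a flat is a Euclidean plane tiled by the finitely many ``flat ring puzzles'' subject to the finitely many local adjacency rules imposed by the links, i.e. a plane tiling over a finite alphabet with finite local constraints. From the $\Aut(F_2)$ seed one checks that the \emph{periodic} such tilings are exactly those carrying the $\IZ^2$ subgroups found above, and the same isoperimetric estimate rules out all the others in a generic $\G_\omega$ --- a hypothetical aperiodic or non-algebraic flat would carry of order $N^2$ pieces along a boundary of order $N$ pieces, hence be consistent with $\omega$ only with exponentially small probability. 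Thus $X_\omega$ is a $\mathrm{CAT}(0)$ ring complex with isolated, periodic, virtually-$\IZ^2$ flats, so $\G_\omega$ is hyperbolic relative to the flat stabilisers; in particular $\IZ^2\inj\G_\omega$, which proves Theorem~\ref{T - randomization autf2}. The main obstacle is the probabilistic core shared by the last two steps: because $\IZ^2$ is not hyperbolic, one cannot fall back on the usual hyperbolic small-cancellation and quasiconvexity machinery, and it is exactly the ring-puzzle structure that makes the convexity of $P$ and the ``$N^2$-interior versus $N$-boundary'' dichotomy combinatorially usable; the delicate part is to calibrate the density threshold so that the estimate simultaneously preserves every algebraic flat and destroys every non-algebraic one.
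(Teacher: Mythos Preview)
Your reduction in the first paragraph is logically sound: if $\IZ^2\hookrightarrow\Gamma$ unconditionally, the flat closing implication is trivially satisfied. The gap is that you cannot prove $\IZ^2\hookrightarrow\Gamma$ for \emph{arbitrary} deterministic data $(\Gamma_0,\{\Gamma_p\},X_0)$ and arbitrary density, which is exactly what the theorem claims. Your ``survival of the flat'' step is precisely the small tori property (Definition~\ref{D - small tori property}): for the randomly chosen chambers to miss a fixed torus with high probability one needs the image of that torus in $X_0/\Gamma_p$ to have size $o(|\sC_p|^\delta)$. The paper establishes this only for specially engineered $\Gamma_p$ built from symmetric groups (Theorem~\ref{T - arbitrary density}); for a generic tower of finite-index subgroups the torus may fill a positive proportion of the quotient and your Borel--Cantelli estimate fails outright. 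In particular nothing excludes that for some deterministic data the random group is hyperbolic, in which case $\IZ^2\not\hookrightarrow\Gamma$, your argument collapses, yet the conjecture holds vacuously. (Your description of the model as ``long random words'' is also not quite the construction here, which selects random $\Gamma_p$-orbits of chambers in $X_0$ rather than random relators.)

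The paper's route is completely different and never tries to embed $\IZ^2$ unconditionally. It proves the \emph{implication} directly: assuming some $\IR^2\hookrightarrow X$, the classification of $\Aut(F_2)$ puzzles (Theorem~\ref{T - Puzzle classification}) combined with a cocompactness limiting argument produces a flat containing infinitely many parallel $\diamond$-strips or $\triangle$-strips; these strips are uniquely $\diamond$-embeddable in $X$ because that property holds in the Brady complex and lifts deterministically through the covering $\tilde X_A\to X_A$ (no probability enters here); Theorem~\ref{Th - gromov Z2 puzzles} then manufactures two commuting infinite-order elements of $\Gamma$ from the strip structure. Your ``full picture'' paragraph also inverts the logic: the classification produces uncountably many \emph{aperiodic} puzzles (series A and B), and there is no attempt, nor any need, to exclude them from $X$ probabilistically. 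The point is only that whenever \emph{any} flat is present, compactness yields one with enough parallel strips to run the embeddability argument.
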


More precisely,  the statement holds with overwhelming probability, at arbitrary density $\delta\in(0,1)$, and with respect to arbitrary deterministic data associated with $\Aut(F_2)$.

The corresponding result for nonarchimedean lattices was left open in \cite{random}, and the techniques  used in the present paper do not appear to shed light on the problem. 

\setcounter{tocdepth}{1}
\tableofcontents

\section{Organization of the proof}

In what sense does the random group resemble $\Aut(F_2)$?

The proof of Theorem \ref{T - randomization autf2} relies on giving an answer to this question which is precise enough to keep track of the (dis)appearance of (periodic) flat planes in the random groups.  

The text is centered around geometric structures called ``ring puzzles'' and ``ring complexes'', which are defined precisely in \textsection\ref{S - ring puzzles} and \textsection\ref{S - Nonplanar} respectively.

\bigskip

 Ring puzzles are tessellations of the Euclidean plane  with  constraints on the set of tiles  and  vertex neighbourhoods (see \textsection\ref{S - ring puzzles}). 
Associated with $\Aut(F_2)$ are ring puzzles, called $\Aut(F_2)$ puzzles, with two tiles and three types of neighbourhoods. The definition is elementary  involving simple local construction rules that are not directly   reminiscent of $\Aut(F_2)$.  Incidentally, ring puzzles give informative ``snapshots'' of the geometry of $\Aut(F_2)$ itself.

It is known  that $\Aut(F_2)$ acts properly with compact quotient on a 2-dimensional  CAT(0) space called the Brady complex, which is essentially uniquely determined, by results of Crisp and Paoluzzi \cite{CP}. 
 The space of $\Aut(F_2)$ puzzles is related to the space of flats in the Brady complex or in any ring complex of type $\Aut(F_2)$. In the case of the Brady complex itself,  the space of flat surjects onto the space of puzzles, which is an indication that $\Aut(F_2)$ plays the role of ``space of maximal rank'' in rank interpolation \cite{rd}.

\bigskip

The main objective in \textsection\ref{S - ring puzzles}, and the first step in the proof of Theorem \ref{T - randomization autf2}, is to show that:

\begin{theorem*} There is an explicit classification of all $\Aut(F_2)$ puzzles.
\end{theorem*}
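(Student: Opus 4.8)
The plan is to reduce the classification to a finite local analysis and then bootstrap it to a global statement by a propagation (``forcing'') argument. First I would fix the combinatorial bookkeeping: record a puzzle by its two tile types, the edge-identification data, and the cyclic link at each vertex, which by hypothesis must be one of the three admissible neighbourhood types. The initial step is then a purely local enumeration — list all ways the two tiles can be glued along an edge, and, more importantly, all ways the tiles incident to a single vertex can be arranged so that the link is admissible. This yields a finite list of vertex figures, and, by pushing one ring of tiles further out, a finite list of admissible second-neighbour configurations around each figure. All of this is mechanical but must be done carefully, since it is the combinatorial input for everything that follows.

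Second, I would extract from this list the \emph{forcing} behaviour: pinpoint which partial configurations admit a unique admissible completion in a prescribed direction. I expect the local rules of \textsection\ref{S - ring puzzles} to be rigid enough that, once a small seed is placed (a tile together with a chosen incident vertex, say), an entire bi-infinite strip of tiles is determined — this is the ``ring'' in ``ring puzzle'' — and determined to be periodic. The crucial structural lemma at this stage should be threefold: (i) the Euclidean plane underlying any puzzle is exhausted by such rings; (ii) each ring belongs to one of finitely many periodic types; and (iii) the admissible ways two consecutive rings can be glued form a short, explicit list dictated by the vertex-figure analysis of Step~1.

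Third, with the ring decomposition in hand, I would encode an arbitrary puzzle by the bi-infinite sequence of its ring types together with the finite gluing data between consecutive rings, so that classifying puzzles becomes the problem of describing a subshift over a finite alphabet cut out by the adjacency constraints of Step~2. I would then analyze this subshift via its finite transition graph, read off the resulting families — the periodic puzzles on the one hand, and on the other the ``hybrid'' puzzles obtained by matching two periodic half-planes along a distinguished ``seam'' ring — and finally verify realizability: each candidate produced this way genuinely extends to a tessellation of the whole plane with every vertex link admissible, so the list is both complete and without redundancy.

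The main obstacle will lie in Steps~2 and~3: showing that the local forcing actually propagates consistently across the entire plane rather than running into a contradiction after finitely many rings, and, dually, that there are no exotic — in particular aperiodic — puzzles slipping between the periodic ones, i.e.\ that the subshift dictated by the local rules is exactly the predicted one. The delicate point is the analysis of the seam configurations, where two distinct periodic regimes meet: one must enumerate these precisely enough to neither overcount the hybrid puzzles nor overlook a valid one, and this is where the case analysis is heaviest.
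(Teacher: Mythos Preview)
Your outline has a structural gap. You assume in Step~2 that from a small seed a \emph{periodic} bi-infinite strip is forced, and in Step~3 that the plane decomposes into a bi-infinite stack of such \emph{parallel} strips, reducing the classification to a one-dimensional subshift. Neither assumption survives contact with the actual puzzles. Several of the puzzles in the final list --- the obtuse sector puzzle, the star puzzle of type $2\times\infty$, the $V$-puzzle, the two acute-sector puzzles, and the entire $2\times n$ series --- contain forced half-infinite strips (the $w$-strips of Lemma~\ref{L - w-blocks}) that propagate in two or three \emph{non-parallel} directions and bound genuine angular sectors. In these puzzles there is no direction along which all slices are periodic, and no decomposition into parallel rings with finite gluing data between neighbours; your subshift encoding simply does not see them. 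The ``seam'' picture of two periodic half-planes meeting along a line accounts for a few of the exceptional puzzles but not the sector-type ones, and these are precisely the cases where the case analysis is hardest.

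The organizing device the paper uses instead is the \emph{$\diamond$-component}: a maximal edge-connected region of lozenges. The first nontrivial lemma (Lemma~\ref{L - components are losenges}) is that the boundary of any such component is convex (inner angles $\leq\pi$), so a $\diamond$-component is an $m\times n$ parallelogram, a strip, a half-plane, or a sector; the second (Lemma~\ref{L - n equals 1 or 2}) is that finite components satisfy $\min(m,n)\leq 2$. This shape dichotomy, not a parallel-strip decomposition, drives the classification: once the shape of a single $\diamond$-component is known, one propagates outward --- often via the one-directional $w$-strip forcing you partly anticipate, but in directions dictated by the component rather than fixed in advance --- and this is exactly what produces the sector-type puzzles your framework would miss. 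Your local-enumeration Step~1 is fine and is indeed needed, but Steps~2 and~3 should be rebuilt around $\diamond$-components rather than around a presupposed family of parallel rings.
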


 The classification  finds:
\begin{itemize}
\item 4 infinite families of puzzles
\item 9 exceptional puzzles not belonging to the families
\end{itemize}
See Theorem \ref{T - Puzzle classification} for a detailed statement. 

(Large portions of a puzzle do not determine the puzzle in general, puzzles are ``bifurcating spaces'' with an interesting (pointed) Gromov--Hausdorff topology; see Remark \ref{R - marked puzzles}.)

\bigskip

Ring complexes are defined in \textsection\ref{S - Nonplanar} as metric simplicial complexes subjects to ``ring conditions'' on the links. 
Compared to classical curvature conditions (the girth condition in dimension 2), the ring conditions serve  to control    the local flatness  and shed light on the  ``intermediate rank'' structure of $\Aut(F_2)$ rather than the curvature.  Local rules identical to that of $\Aut(F_2)$ puzzles give rise to ring complexes of type $\Aut(F_2)$.
The random group in models associated with $\Aut(F_2)$ acts on a ring complex of type $\Aut(F_2)$.

\bigskip

Theorem \ref{T - randomization autf2} is proved in \textsection\ref{S - random autf2} using the classification of puzzles. It relies on a more general result in \textsection\ref{S - Nonplanar} (Theorem \ref{Th - gromov Z2 puzzles}) establishing the flat closing conjecture under a technical assumption which is satisfied for randomizations of $\Aut(F_2)$.

\bigskip

We do not pursue the study of general ring puzzles in this paper. The definition is very much in the spirit of introducing new rank interpolation tools (compare \cite{rd} or \cite{chambers}). The ring conditions seem to give rise to an interesting set of objects to study in their own right.    

\bigskip

We will further discuss two points related to the Theorem \ref{T - randomization autf2} and its proof:

\begin{enumerate}[a)]
\item There are several significant differences between randomizations of $\Aut(F_2)$ and randomizations of nonarchimedean lattices as considered in \cite{random}.  For example, we show in \textsection\ref{S - Nonplanar} that:
\begin{theorem*}
For some choice of the deterministic data, the random group contains $\ZI^2$ with overwhelming probability at arbitrary density $\delta\in(0,1)$.
\end{theorem*}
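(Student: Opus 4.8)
The plan is to produce one explicit choice of deterministic data for which a $\ZI^2$-periodic flat plane survives the randomization, and then to argue that its group of periods embeds into the resulting random group $\Gamma$. The contrast with Theorem~\ref{T - randomization autf2} is not a contradiction: there we show the implication ``flat plane $\Rightarrow\ \ZI^2\hookrightarrow\Gamma$'' always holds, and here we show that for a suitable seed the hypothesis of that implication is genuinely realized with overwhelming probability.

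Recall from \cite{random} that, given deterministic data of type $\Aut(F_2)$, the random group $\Gamma$ at density $\delta$ acts properly cocompactly on a ring complex $X$ of type $\Aut(F_2)$, built from the complex $X_0$ attached to the seed by adjoining the (long, generic) random relators. By the classification of $\Aut(F_2)$ puzzles (Theorem~\ref{T - Puzzle classification}) there is a doubly periodic puzzle $P$, i.e.\ an $\Aut(F_2)$-tessellation of $\IR^2$ invariant under a rank-two lattice of translations $L\cong\ZI^2$; for instance one coming from a periodic flat plane in the Brady complex, which exists because $\Aut(F_2)$ itself contains a copy of $\ZI^2$ (e.g.\ the automorphisms $b\mapsto ab$ and $b\mapsto ba$ fixing $a$). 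The choice of deterministic data is then: take $X_0$ to contain the compact flat torus $\IR^2/L$ carrying $P/L$, glued to the remainder of $X_0$ along its $1$-skeleton so that the $\Aut(F_2)$ ring conditions hold at every vertex. With this choice $L\cong\ZI^2$ sits inside $\pi_1(X_0)$.

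Two things then remain to be shown: (i) the periodic flat plane \emph{persists}, that is, the developing image of $\IR^2/L$ remains an isometrically embedded $L$-periodic flat plane in the randomized ring complex $X$; and (ii) the image of $L$ in $\Gamma$ is still free abelian of rank two. For (i) one exploits that the ring conditions are conditions on links, hence local, whereas the random relators are long generic loops: with overwhelming probability they are attached away from, or in generic position with respect to, the periodic region, so they alter neither the links along the torus nor its local convexity — this is the analogue here of the fact that a bounded-complexity periodic configuration is too ``cheap'' to be hit by sparse random relators. Granting (i), statement (ii) follows from the action of $\Gamma$ on $X$: the two generators of $L$ act on the preserved flat plane by independent translations, hence are hyperbolic isometries of $X$ with distinct axes; in particular they have infinite order in $\Gamma$, and a relation $\ell_1^{m}\ell_2^{n}=1$ in $\Gamma$ would act on the flat as the translation by $m v_1+n v_2$, forcing $m=n=0$. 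Thus $\ZI^2\hookrightarrow\Gamma$.

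The main obstacle is step (i): making precise, and uniform over all $\delta\in(0,1)$, the statement that with overwhelming probability the random relators do not disturb the chosen periodic flat. This requires the probabilistic machinery of \cite{random} controlling where random relators are attached in a ring complex and how they affect links — essentially a ``survival of the periodic region'' lemma — together with a verification, using the puzzle classification, that the doubly periodic puzzle $P$ really does occur as a flat subcomplex of a bona fide ring complex of type $\Aut(F_2)$ to which the seed construction of \cite{random} applies. Once that lemma is established the theorem follows, exhibiting the announced difference with the nonarchimedean lattices of \cite{random}, for which no such persistent periodic flats are available.
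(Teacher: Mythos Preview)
Your proposal has a genuine gap rooted in a misreading of the random model. In the density model over $(\Gamma_0,\{\Gamma_p\},X_0)$, one does not \emph{adjoin} long random relators to $X_0$; one \emph{removes} a random set of $|\sC_p|^\delta$ face-orbits from $X_0$, where $\sC_p$ is the set of $\Gamma_p$-orbits of faces (the random complex is $X_A=X_0\setminus\bigcup_{C\in A}\mathring C$). Consequently the ``choice of deterministic data'' is not a modification of $X_0$ --- $X_0$ is fixed to be the Brady complex --- but a choice of the tower of finite index subgroups $\Gamma_p\leq\Aut(F_2)$. Your proposed construction of gluing a flat torus into $X_0$ is therefore outside the model.

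The substantive content you are missing is quantitative. A fixed $\ZI^2\hookrightarrow\Aut(F_2)$ gives a periodic flat in $X_0$; for that flat to survive the random face deletion with overwhelming probability, the set $F_p'$ of face-orbits it occupies in $\sC_p$ must satisfy $|F_p'|=o(|\sC_p|^{1-\delta})$, since the probability that $|\sC_p|^\delta$ uniform draws all miss $F_p'$ is roughly $\exp(-|\sC_p|^{\delta-1}|F_p'|)$. This is the \emph{small tori property} (Definition~\ref{D - small tori property}), and it is \emph{not} automatic: for generic towers $\Gamma_p$ the torus index $[\ZI^2:\ZI^2\cap\Gamma_p]$ grows at a rate comparable to $[\Gamma_0:\Gamma_p]$, and the flat is destroyed. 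The paper's proof engineers $\Gamma_p$ using surjections onto the symmetric groups $S_p$, so that $[\Gamma_0:\Gamma_p]\geq p!$ grows super-exponentially (Stirling), while the index of $\ZI^2\cap\Gamma_p$ in $\ZI^2$ is bounded by Landau's function $g(p)=e^{(1+o(1))\sqrt{p\ln p}}$ and hence by $e^{O(p)}$. This disparity is exactly what forces the small tori property to hold for every $\delta>0$. Your ``survival of the periodic region'' heuristic gestures at the right event but supplies neither the correct mechanism (face removal, not relator attachment) nor the crucial growth-rate comparison that makes it true.
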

\noindent Here the deterministic data has to be chosen suitably, it is associated the symmetric groups $S_n$. The property underlying the statement of the theorem, which is satisfied for deterministic data associated with $S_n$, is the \emph{small tori property} defined in \textsection\ref{S - Nonplanar}, see Definition \ref{D - small tori property}.

\item The above raises the question of using ring puzzles as ``blueprints'' to construct groups which resemble  $\Aut(F_2)$ in a geometrical sense.  While we can produce a wide range of random examples, finding explicit groups does not appear to be an easy task. We give an explicit construction of such a group   in \textsection\ref{S - explicit examples 2}.  
\end{enumerate}

\section{Ring puzzles}\label{S - ring puzzles}

A \emph{ring puzzle} is a tessellation of the Euclidean plane $\IR^2$ using  planar polygons  with coloured angles. The \emph{ring} at a vertex of the puzzle is the coloured circle (of length $2\pi$) of small radius around that vertex.

By \emph{ring puzzle problem} we mean the problem of classifying  all puzzles whose coloured polygons and rings belong to prescribed sets.

We  consider here  the following instance of the problem, which is related to $\Aut(F_2)$.

The two prescribed sets are:
\[
\text{ Shapes }:=\ \ \  \includegraphics[width=2.7cm,trim = 0cm .35cm 0cm 0cm]{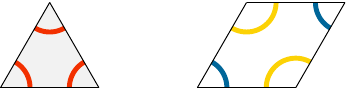}
\]
\vskip.35cm

\noindent   
(an equilateral triangle and a lozenge) and
\[
\text{ Rings }:=\   \includegraphics[width=1cm,trim = 0cm .9cm 0cm 0cm]{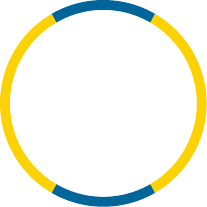}\ \includegraphics[width=1cm,trim = 0cm .9cm 0cm 0cm]{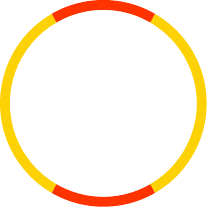}\ \includegraphics[width=1cm,trim = 0cm .9cm 0cm 0cm]{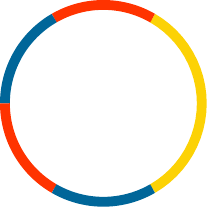}
\]
\vskip.5cm
\noindent The yellow arcs have length $2\pi/3$, the  others $\pi/3$. (For details on the relation with $\Aut(F_2)$, see  \textsection\ref{S - Nonplanar}.)  

We assume  that we have an infinite supply of coloured polygons for each shape type,  called the pieces of the puzzle. The position and orientation of the pieces  in the plane is free.

In this section we discuss the ring puzzle problem on these two sets. Every solution to the problem is called an \emph{$\Aut(F_2)$ puzzle}. 

For example, here is a permitted (bi-infinite) portion of an $\Aut(F_2)$ puzzle which we shall call the \emph{2-strip}. 

 \[
\includegraphics[width=7.5cm]{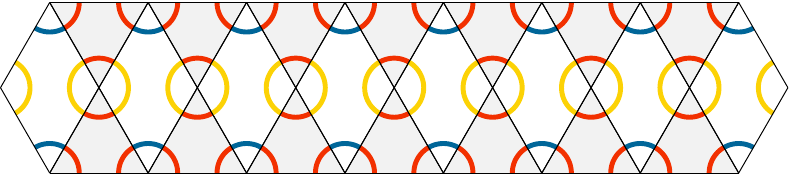}
\]

\bigskip

The main result  (Theorem \ref{T - Puzzle classification})  classifies $\Aut(F_2)$ puzzles.  This can  be achieved after a series of preliminary results on the local-global geometry of these puzzles.

Let us first isolate an important property of the two sets given above, which reflects the fact that $\Aut(F_2)$ puzzles are ``nonpositively curved'':  

\begin{definition}
A ring puzzle problem has the \emph{$\theta_0$-extension property}  if the prescribed sets satisfies the following condition: whenever a coloured segment $[0,\theta]$ of length $\theta\in [0,2\pi]$ can be isometrically embedded in distinct coloured rings  respecting the coloration, then $\theta\leq \theta_0$.  
\end{definition}

For $\Aut(F_2)$ puzzles we have that $\theta_0=\pi$, which corresponds to nonpositive curvature. 

The following consequence is straightforward:

\begin{lemma}\label{L -NPC}
Assuming the $\theta_0$-extension property,  every ring puzzle is such that  if consecutive pieces meet at a vertex contributing  to an angle $>\theta_0$, then the remaining pieces at this vertex are uniquely determined (up to colour preserving isometry). 
\end{lemma}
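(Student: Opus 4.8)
The plan is to read off the statement directly from the $\theta_0$-extension property applied to the vertex ring. Here is how I would set it up. Fix a vertex $v$ of the puzzle and consider its ring $R_v$, a coloured circle of length $2\pi$. Each piece incident to $v$ contributes a coloured arc of $R_v$ (a subinterval whose length is the angle of that piece at $v$), and these arcs tile $R_v$ in cyclic order. By hypothesis, some set of consecutive pieces meet at $v$ and together contribute a coloured arc $A$ of length $\theta > \theta_0$. I would first observe that $A$, being a coloured segment of length $\theta \in (\theta_0, 2\pi] \subseteq [0,2\pi]$, is an admissible ``test segment'' for the $\theta_0$-extension property.

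Next I would argue the rigidity. The coloured ring $R_v$ must be one of the prescribed rings, and it contains an isometric colour-preserving copy of $A$. Suppose, for contradiction, that the pieces filling the complement of $A$ in $R_v$ are not uniquely determined up to colour-preserving isometry. Then there exist two prescribed rings $R, R'$ (possibly equal) each containing $A$, such that the completions of $A$ to a full coloured circle differ — equivalently, there are two distinct ways to place the arc $A$ inside the prescribed ring set so that the complementary arcs receive non-isometric colourings. In either case one extracts two \emph{distinct} coloured rings into which the same coloured segment of length $\theta$ embeds isometrically respecting colours. By the $\theta_0$-extension property this forces $\theta \le \theta_0$, contradicting $\theta > \theta_0$. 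Hence the completion is unique, i.e. the colouring of the rest of $R_v$ is determined, and therefore — since each remaining piece is determined by the shape forced by its angle and colour pattern on the ring, using that the two shapes have distinct angle/colour signatures — the remaining pieces at $v$ are uniquely determined up to colour-preserving isometry.

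The one genuinely delicate point, and the step I expect to be the main obstacle, is passing from ``the coloured ring $R_v$ is uniquely determined as a coloured circle'' to ``the individual remaining \emph{pieces} are uniquely determined''. A coloured ring records only angles at $v$ and their colours; in principle a given arc pattern on the ring could be realized by more than one arrangement of pieces (e.g. an angle that equals the sum of two smaller admissible angles, or a shape whose angle/colour data is not determined by the ring arc alone). For the $\Aut(F_2)$ sets this is harmless because the triangle and the lozenge have angles $\pi/3$ and $\{\pi/3,2\pi/3\}$ with prescribed colourings, and the ring set is small enough that the arc decomposition is forced; but in the general statement one should either add this as a (clearly satisfied) standing hypothesis on the prescribed sets or note that the lemma is really a statement about the ring, with the piece-level conclusion following from the finiteness and genericity of the prescribed shape set. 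I would state the lemma's proof at the level of rings and then remark that for the two sets under consideration the refinement to pieces is immediate by inspection.

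Finally, I would note that the consecutivity hypothesis is used only to guarantee that the angle-$>\theta_0$ region is a single arc $A$ of $R_v$ rather than a union of several arcs; this is what makes $A$ a legitimate connected test segment for the extension property. With $\theta_0 = \pi$ in the $\Aut(F_2)$ case, the lemma says precisely that once pieces around a vertex exceed a half-turn, the vertex link is rigid — this is the combinatorial avatar of the nonpositive-curvature ``no short loops / unique geodesic completion'' phenomenon, and it is the seed fact from which the later local-to-global analysis of puzzles proceeds.
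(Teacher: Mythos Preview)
Your proposal is correct and is exactly the intended argument; the paper in fact gives no proof at all, introducing the lemma only with the phrase ``The following consequence is straightforward.'' Your write-up is a careful unpacking of that: the consecutive pieces contribute a single coloured arc of length $>\theta_0$ in the vertex ring, and the $\theta_0$-extension property then forces the ambient ring---hence the complementary arc---to be unique. The caveat you flag about passing from ring-uniqueness to piece-uniqueness is a legitimate general remark, but the paper is content to leave this implicit (and, as you note, it is immediate for the $\Aut(F_2)$ shape set by inspection).
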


Let $P$ be  an $\Aut(F_2)$ puzzle. 

\begin{definition}
A $\diamond$-gallery in $P$ is a sequence of consecutive  ($=$ intersecting along an edge) lozenges. Two lozenges are connected if they belong to a $\diamond$-gallery. The  connected components of lozenges in $P$ are called the $\diamond$-components of $P$. 
\end{definition}

\begin{lemma}\label{L - components are losenges}
Let $C$ be a $\diamond$-component. The inner angle at every vertex of $\del C$ is equal to either $\pi/3$, $2\pi/3$ or $\pi$.  
\end{lemma}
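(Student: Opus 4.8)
The plan is to reduce the statement to a single inequality — the angle that $C$ subtends at each of its boundary corners is at most $\pi$ — and then to obtain that inequality from nonpositive curvature (Lemma~\ref{L -NPC}) together with an inspection of the three admissible rings.

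First I would fix a vertex $v\in\del C$ and look at the ring at $v$: the pieces of the puzzle meeting $v$ sit in a cyclic order and their angular sectors tile a coloured circle of length $2\pi$ which is one of the three prescribed rings. A lozenge of $C$ meeting $v$ contributes a single sector, of angle $\pi/3$ (an acute corner, a non-yellow arc) or $2\pi/3$ (an obtuse corner, a yellow arc); and since consecutive lozenges of a $\diamond$-gallery share an edge, the sectors coming from lozenges of $C$ group into maximal blocks of cyclically consecutive sectors — call them the fans at $v$ — the inner angle of $C$ at the corresponding corner of $\del C$ being the total angle $\theta$ of one such fan. Because $v$ lies on the topological boundary of the closed set $C$, at least one sector at $v$ is not in $C$, so $\theta<2\pi$; and a fan is nonempty, so $\theta\geq\pi/3$. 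As any nonnegative integer combination of $\pi/3$ and $2\pi/3$ that is positive and $\leq\pi$ already lies in $\{\pi/3,2\pi/3,\pi\}$, it suffices to prove $\theta\leq\pi$.

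Assume for contradiction that $\theta>\pi$. Then the union of the sectors of the fan is a coloured segment of length $\theta>\pi=\theta_0$ isometrically embedded in the ring at $v$; by the $\theta_0$-extension property this already pins down which of the three rings occurs, and then by Lemma~\ref{L -NPC} the pieces of the puzzle at $v$ lying outside the fan are uniquely determined up to colour-preserving isometry. What remains is a finite inspection: for each admissible ring $R$ and each coloured sub-arc $\sigma\subset R$ of length $>\pi$ that is realised as a concatenation of corners of the lozenge, one examines the complementary arc $R\setminus\sigma$, which has length $2\pi-\theta\in[\pi/3,\pi)$ and hence is tiled by one or two further corners. In every case one should find that either one of those complementary corners is itself a corner of a lozenge that shares an edge with an end lozenge of the fan — so it lies in the same $\diamond$-gallery and hence in $C$, contradicting maximality of the fan — or the forced colouring of $R\setminus\sigma$ cannot be realised by corners of the triangle and the lozenge at all, which is absurd. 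This contradiction gives $\theta\leq\pi$, and the lemma follows.

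The hard part will be this last step, since it depends on the precise colourings of the three rings and of the four corners of the lozenge, and one must track, ray by ray, which arc of each ring can be the image of which corner, respecting colours and the edges bounding the fan. Each individual verification is elementary, but the bookkeeping — and in particular the subsidiary point that no admissible ring contains a length-$>\pi$ union of lozenge corners whose complement consists of triangle corners only — is where the genuine work lies. A minor additional subtlety, harmless for the argument above because the bound is established fan by fan, is that $C$ may a priori meet $v$ in several disjoint fans; this should be noted and dealt with in passing.
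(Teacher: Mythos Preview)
Your proposal is correct and follows essentially the same route as the paper: reduce to $\theta\in\{4\pi/3,5\pi/3\}$ and then inspect the three rings, finding that for $4\pi/3$ the complementary arc is a yellow $2\pi/3$ lozenge corner adjacent to the fan (so lies in $C$), while for $5\pi/3$ no ring contains a length-$5\pi/3$ arc built from lozenge corners at all. The paper's write-up is simply more compressed---it names the two angle values and disposes of each in a line, without passing through Lemma~\ref{L -NPC} (which is not really needed here: knowing the ring, rather than the pieces, is what matters) and without raising the multi-fan issue you mention; your more careful framing is fine but not a different argument.
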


\begin{proof}
The inner angle is a multiple of $\pi/3$. Assume that it is $>\pi$;  since we are on the boundary, it can only be $4\pi/3$ or $5\pi/3$. The value $4\pi/3$ would single out the first ring, however, the remaining colour (namely yellow) corresponds to a lozenge,  contradicting the fact that we are on the boundary.  
The value $5\pi/3$ is prohibited by the fact that they is no yellow--green simplicial  path of length $5\pi/3$ inside in the prescribed set of rings (by simplicial we mean both of whose end points are vertices of the ring).   
\end{proof}

Let us consider the finite $\diamond$-components first. 

\begin{lemma}\label{L - n equals 1 or 2}
Every finite $\diamond$-component of $P$ is a parallelogram of size $m\times n$ such that 
 $ \min(m,n)\leq 2$.
\end{lemma}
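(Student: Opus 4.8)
The plan is to use the convexity of $C$ forced by Lemma~\ref{L - components are losenges} to pin down the combinatorial shape of $\del C$, eliminate all shapes but the parallelogram, and then read the bound $\min(m,n)\le 2$ off the ring conditions.

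First I would note that, by Lemma~\ref{L - components are losenges}, every inner angle of the polygon $\del C$ lies in $\{\pi/3,2\pi/3,\pi\}$; in particular it is $\le\pi$, so $C$ is a finite convex region in the triangular grid whose non-flat corners have angle $\pi/3$ or $2\pi/3$. Letting $a$ (resp.\ $b$) be the number of corners of angle $\pi/3$ (resp.\ $2\pi/3$) and using that the exterior angles of a convex polygon sum to $2\pi$, one gets $2a+b=6$, so $(a,b)\in\{(3,0),(2,2),(1,4),(0,6)\}$: $\del C$ is an equilateral triangle, a quadrilateral, a pentagon, or an equiangular (all $2\pi/3$) hexagon, and in the quadrilateral case the two $\pi/3$-corners are either opposite (a parallelogram) or adjacent (a trapezoid obtained from an equilateral triangle by cutting off a smaller one at the tip).

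Next I would eliminate every shape except the parallelogram. Since a lozenge covers exactly one upward and one downward unit triangle of the grid, any lozenge-tiled region carries equally many of each; a direct count shows the triangle, the trapezoid, and the pentagon each have strictly more unit triangles of one type than of the other, so none of them occurs. The equiangular hexagon is not excluded by this parity invariant, so here I would invoke the ring conditions: a lozenge tiling of such a hexagon contains an interior vertex at which three $2\pi/3$-corners of lozenges meet, and the ring there — three consecutive yellow arcs — is not among the three prescribed rings, a contradiction. (Should that ring turn out to be admissible, the hexagon would instead be ruled out by following a line of lozenges of the tiling to the two boundary edges it crosses and deriving a contradiction from the triangular pieces $P$ must place outside $C$ there.) This leaves $\del C$ a parallelogram; its two pairs of sides then point in directions $u,v$ with $\angle(u,v)=\pi/3$ and have integer lengths $m$ and $n$, and a short argument — a lozenge of $C$ with an edge parallel to neither $u$ nor $v$ would produce at one of its vertices an angle pattern incompatible with convexity of $C$ or with the ring set — shows $C$ is exactly the $m\times n$ grid of lozenges.

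It remains to prove $\min(m,n)\le 2$, which I expect to be the crux. Assume $m,n\ge 3$; fix a $\pi/3$-corner $v_0$ of $C$ and follow $\del C$ through its next one or two vertices. Since $C$ is a \emph{maximal} $\diamond$-component, $P$ fills the angles at these vertices, outside $C$, entirely with triangular pieces, and each of these vertices must carry one of the three prescribed rings; reading off these rings — using Lemma~\ref{L -NPC} for the rigidity of the high-angle vertices — forces the configuration to propagate so that either a further lozenge gets attached along $\del C$ (contradicting maximality of $C$) or no admissible ring completion exists (a contradiction). Unlike the earlier cases, this step is not settled by a single clean invariant: every interior vertex of an $m\times n$ grid is admissible — such vertices already occur in the 2-strip — so the obstruction must come from the interaction of $C$ with the surrounding triangles, i.e.\ from a finite but somewhat delicate case check against the three allowed rings.
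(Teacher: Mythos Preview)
Your outline follows the paper's two-step structure, but the weight is distributed differently and the second step is left as a plan rather than a proof.

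For the parallelogram claim you give substantially more detail than the paper (which simply cites Lemma~\ref{L - components are losenges}), and your angle-sum classification plus the up/down parity count is a clean way to dispatch the triangle, trapezoid, and pentagon.  The hexagon case, however, is hedged: you are not sure whether the three-yellow ring is admissible, and your fallback (``follow a line of lozenges to the boundary'') is not an argument.  In fact the clean way through is via the \emph{interior} ring condition: only one of the three rings is composed entirely of lozenge colours, and it is the alternating yellow--green--yellow--green ring, so every interior vertex of $C$ has exactly two obtuse and two acute lozenge corners; this immediately forces all lozenges of $C$ to share one orientation and rules out the hexagonal shape in one stroke, also giving you the ``$m\times n$ grid'' statement without the extra argument you sketch.

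For $\min(m,n)\le 2$ you correctly identify that this is the crux and that it is a boundary case-check, but you do not carry it out.  The paper's proof hinges on one concrete observation you are missing: at every flat boundary vertex of $C$ the \emph{third} ring is forced, and at the two obtuse corners the \emph{second} ring is forced.  Once you know this, the first layer of triangles around $C$ is completely determined, the $w$-block appears at the acute corner, and the contradiction is read off in the 3-neighbourhood (the paper displays it in a figure).  Without that observation your ``look at the corner and propagate using Lemma~\ref{L -NPC}'' is the right instinct but cannot get started: you need to know \emph{which} ring sits at each boundary vertex before any propagation is possible.
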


\begin{proof}  
The fact that finite $\diamond$-components are parallelograms follow from Lemma \ref{L - components are losenges}. If $m\times n$ denotes the size of such a component with $m\leq n$, then we claim that $m\leq 2$. This is a combinatorial problem which  can be solved in the 3-neighbourhood of the component. Note that the 3rd ring has to be used at every point in the boundary, except at the two obtuse angles, where it is the 2nd ring that has to be used. 

The solution of the problem, assuming $m\geq 3$ towards a contradiction, is summarized in the following drawing:
\[
\includegraphics[width=6cm]{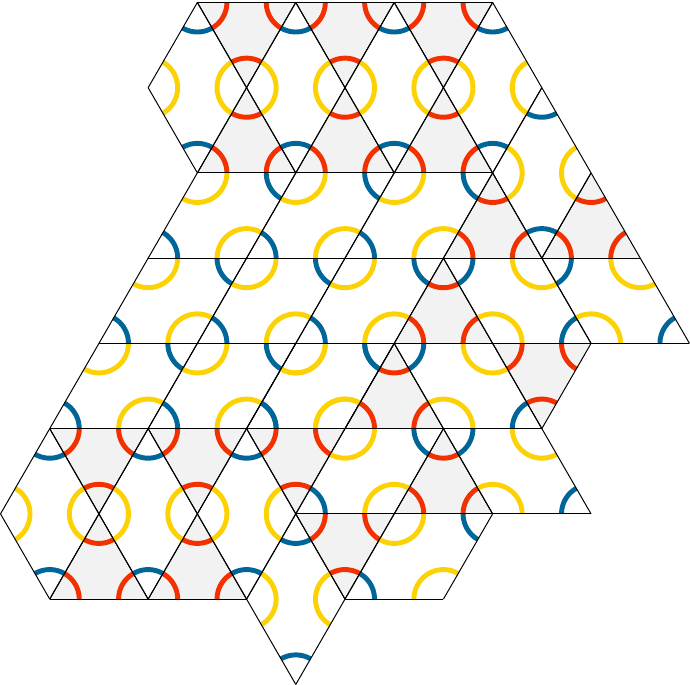}
\]
The rightmost part does not extend in $P$ given the prescribed sets of types. The drawing explicits one of several similar cases.
\end{proof}

In general, the prescribed sets in a ring puzzle problem will determine basic blocks (``tiles'') that fit together to build larger portions of the puzzle using the extension property. 

An interesting block for $\Aut(F_2)$ puzzles is the \emph{$w$-block}:
\[
\includegraphics[width=1.7cm]{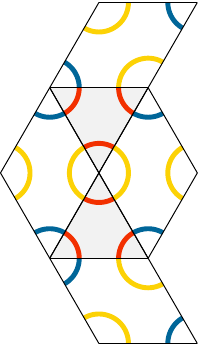}
\]
The $w$-block appears in the proof of Lemma \ref{L - n equals 1 or 2}. 

Observe that $w$-blocks are ``forward analytic'' in the following sense:

\begin{lemma}\label{L - w-blocks} Every $w$-block  in $P$ extends to a unique $w$-strip in $P$.  
\end{lemma}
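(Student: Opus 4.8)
The plan is to run the $\theta_0$-extension property (Lemma \ref{L -NPC}) as a propagation engine: a $w$-block contains a vertex where consecutive pieces contribute an angle $>\pi$, so the remaining pieces there are forced, and I want to show that this forcing, applied repeatedly along the ``forward'' direction of the block, never stalls and never runs into an obstruction, so that it produces a bi-infinite $w$-strip, and moreover that the output is unique. First I would set up notation: fix a $w$-block $B$ inside $P$, identify the ``spine'' edge(s) along which the strip is meant to grow, and isolate the vertex (or vertices) of $B$ at which the interior angle of the already-placed pieces exceeds $\theta_0=\pi$. At such a vertex, Lemma \ref{L -NPC} tells us the completion of the ring is forced up to colour-preserving isometry; since we are in an actual puzzle $P$, that forced completion is realized, and it produces at least one new triangle or lozenge adjacent to $B$ along the spine.

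The key steps, in order: (1) verify that every $w$-block has at least one ``$>\pi$ vertex'' on each end, so that the extension can start on both sides; (2) show the local move is \emph{closed}, i.e. after adjoining the forced pieces one again obtains a (larger) configuration containing a $w$-block shifted by one step along the spine — this is the inductive step and reduces to a finite check in the $1$- or $2$-neighbourhood of the relevant vertex, using only the two prescribed shapes and three prescribed rings; (3) conclude by induction that the block extends to a bi-infinite $w$-strip $S$ with $B\subseteq S\subseteq P$; (4) prove uniqueness: any $w$-strip in $P$ containing $B$ must, at each stage, agree with the forced completion, so by induction on distance from $B$ it coincides with $S$. Steps (2) and (4) are essentially the same finite case analysis, of the same flavour as the drawing invoked in the proof of Lemma \ref{L - n equals 1 or 2}; I would present it as one figure (a single period of the strip together with its forced $1$-neighbourhood) and remark that the remaining cases are symmetric.

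The main obstacle I anticipate is step (2): ensuring the propagation is genuinely \emph{self-sustaining}, i.e. that the newly forced pieces do not merely complete the ring at one vertex but actually recreate a full $w$-block one unit further along, so the induction has something to bite on. It is conceivable a priori that the forced completion at a $>\pi$ vertex yields pieces whose own angles are all $\leq\pi$, leaving no new $>\pi$ vertex to continue from; ruling this out is exactly where the specific geometry of the $w$-block (as opposed to a generic block) must be used, and where one checks that the three prescribed rings leave no alternative. A secondary, more bookkeeping-level point is to make sure the two ends of the strip can be grown independently and that the resulting bi-infinite object is embedded (no self-overlap) inside the plane; embeddedness should follow because at each step the new pieces are attached in a half-plane-like region disjoint from the part already constructed, but this should be stated explicitly rather than left implicit.
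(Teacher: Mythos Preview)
Your approach is the paper's approach: the entire proof in the paper is the single sentence ``Lemma~\ref{L - w-blocks} is a consequence of Lemma~\ref{L -NPC}.'' So the propagation-via-forced-rings idea is exactly right, and your steps (2)--(4) are a faithful unpacking of what that one line means.

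There is, however, one genuine misreading that would cause your step~(1) to fail. The $w$-strip is \emph{not} bi-infinite: the paper defines it as ``infinite at least on the right hand side'' and describes $w$-blocks as ``forward analytic''. The extension is only claimed, and only holds, in the forward direction. On the backward end of a $w$-block there need not be a $>\pi$ vertex, and indeed the backward extension is not unique (this is why later arguments, e.g.\ Lemma~\ref{L - component 1xn}, only ever invoke the forward direction of $w$-strips, and why the paper remarks that two $w$-strips cannot intersect transversally \emph{in the forward direction}). So drop the ``on each end / both sides'' part of step~(1) and the ``bi-infinite'' in step~(3); what remains is correct and matches the paper. Your secondary worry about embeddedness is a non-issue here since $P$ is already a tessellation of the plane and the forced pieces are pieces of $P$, not newly constructed ones.
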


By $w$-strip we mean the  strip (infinite at least on the right hand side): 
\[
\includegraphics[width=3.5cm,trim = 0cm 1.6cm 0cm 0cm]{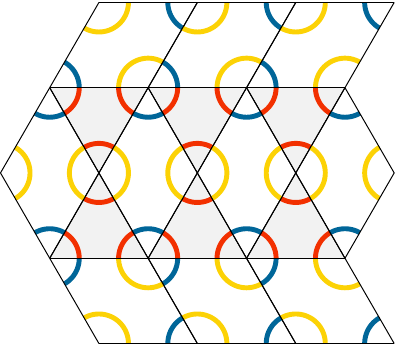}\ \ \cdots
\]
\vspace{1cm}

\noindent Lemma \ref{L - w-blocks} is a  consequence of Lemma \ref{L -NPC}. 

Note that two $w$-strips cannot intersect transversally in the forward direction.

\begin{lemma}\label{L - 2x2}
The $\diamond$-components of type $2\times 2$  in $P$ extend uniquely into a bi-infinite strip of height 4 of the form
\[
\includegraphics[width=8cm]{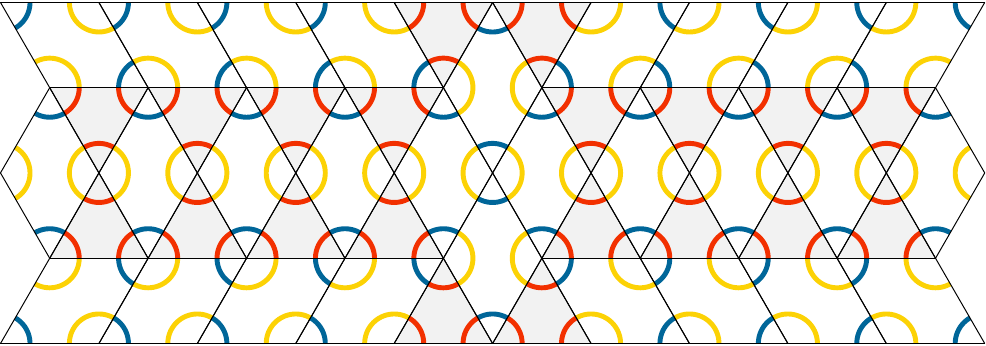}
\]
\end{lemma}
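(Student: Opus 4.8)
Let $C$ be a $2\times 2$ $\diamond$-component of $P$. The idea is first to show that the ring constraints force the layer of pieces immediately around $C$, then to recognize $w$-blocks in that layer and let Lemma~\ref{L - w-blocks} carry the configuration to infinity in both directions; uniqueness then comes for free, since no choice is ever made. To begin, by Lemma~\ref{L - components are losenges} the component $C$ is a rhombus whose boundary carries inner angle $\pi/3$ at two opposite corners, $2\pi/3$ at the other two, and $\pi$ at the single interior vertex of each of its four sides (two small lozenges meeting flatly there) --- this is the angle pattern already exploited in the proof of Lemma~\ref{L - n equals 1 or 2}. Since $C$ is a maximal $\diamond$-component, a piece glued to $\del C$ along an edge cannot be a lozenge, hence is an equilateral triangle. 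So the closed $1$-neighbourhood $N_1(C)$ is $C$ together with a collar of triangles, and the task is to pin this collar down and then propagate.

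\textbf{Forcing $N_1(C)$.} At an obtuse corner $v$ of $C$, the colour arc contributed by $C$ forces the ring at $v$ to be the second prescribed ring (exactly as in the proof of Lemma~\ref{L - n equals 1 or 2}); reading off the remaining colours, together with the fact that every piece at $v$ lying outside $C$ is a triangle, determines the fan of triangles filling the angle $4\pi/3$ around $v$. At the flat interior vertex $w$ of a side, once one triangle of an adjacent corner-fan is in place the angle already accounted for at $w$ exceeds $\pi$, so Lemma~\ref{L -NPC} (the $\theta_0$-extension property with $\theta_0=\pi$) forces the rest of the ring there; walking along $\del C$ from the obtuse corners towards the acute ones, each successive vertex inherits more than $\pi$ of already-determined angle, so its completion is forced as well. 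Hence $N_1(C)$ is uniquely determined and consists, outside $C$, of triangle fans.

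\textbf{Propagation to a bi-infinite strip.} Inside $N_1(C)$ (after at most one further forced layer) one locates $w$-blocks on two opposite sides of $C$, pointing along the two ends of the would-be strip --- the same blocks that surface in the proof of Lemma~\ref{L - n equals 1 or 2}. By Lemma~\ref{L - w-blocks} each extends to a unique $w$-strip in $P$, and these two $w$-strips, together with the forced triangles around $C$, determine a bi-infinite band: at every vertex away from $C$ the already-placed pieces subtend strictly more than $\pi$, so Lemma~\ref{L -NPC} applies over and over, reproducing in particular the $2\times 2$ blocks with their triangle collars periodically along the band. This is precisely the displayed height-$4$ strip, and since nothing was ever chosen along the way, the extension is unique.

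\textbf{Main obstacle.} The delicate part is the forcing of $N_1(C)$: one must verify, ring by ring in the $1$- and $2$-neighbourhood of $C$, that the constraints genuinely leave no freedom --- in particular that at each relevant vertex strictly more than $\pi$ of angle is pinned down before any branching could occur, and that the forced completions at the obtuse and acute corners contain no stray lozenge (a priori conceivable, since such a lozenge would touch $C$ only at a vertex, not along an edge, and so would not enlarge $C$). This is a finite case check of the same nature as, and relying on, the combinatorics already carried out for Lemmas~\ref{L - components are losenges} and~\ref{L - n equals 1 or 2}; once it is in hand, the passage to the bi-infinite strip via Lemma~\ref{L - w-blocks} is routine.
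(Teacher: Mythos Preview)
Your approach is essentially the paper's, only spelled out in full: the paper's entire proof is the single line ``This is a consequence of Lemma~\ref{L - w-blocks},'' and what you have written is exactly the intended unpacking---force the collar of triangles around $C$ using the ring constraints (as in Lemma~\ref{L - n equals 1 or 2}), locate the $w$-blocks in that collar, and invoke Lemma~\ref{L - w-blocks} to propagate outward in both directions. One small inaccuracy: the resulting strip (which the paper calls the \emph{double $w$-strip}) does not reproduce $2\times 2$ $\diamond$-blocks periodically along its length; rather, the single $2\times 2$ block sits in the middle with a $w$-strip running off to each side, so your clause ``reproducing in particular the $2\times 2$ blocks \ldots\ periodically'' should be dropped.
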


\begin{proof}
This is a  consequence of Lemma \ref{L - w-blocks}.
\end{proof}

We call this strip the double $w$-strip.

\begin{lemma}\label{L - 2xn puzzle}
Finite $\diamond$-component of type $2\times n$ with $n\geq 3$ are contained in a unique puzzle. 
\end{lemma}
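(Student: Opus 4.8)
The plan is to bootstrap from Lemma \ref{L - 2x2}. A $2\times n$ component with $n\geq 3$ contains at least one $2\times 2$ sub-block of lozenges, so by Lemma \ref{L - 2x2} that sub-block already forces a bi-infinite double $w$-strip of height $4$ running in the ``long'' direction of the component. First I would record what this forces along the two long sides of the $2\times n$ component: on each long side, Lemma \ref{L - components are losenges} tells us the boundary inner angles are $\pi/3$, $2\pi/3$ or $\pi$, and (as in the proof of Lemma \ref{L - n equals 1 or 2}) the third ring is used at every boundary vertex except the obtuse corners, where the second ring is forced. Because the component has width exactly $2$ and length $n\geq 3$, every lozenge of the component is either in the forced $2\times 2$ block or adjacent to it along a $\diamond$-gallery; so iterating Lemma \ref{L - 2x2} (equivalently, propagating the $w$-blocks of Lemma \ref{L - w-blocks} along the component) shows the double $w$-strip of height $4$ is the same on both ends and the whole $2\times n$ component sits inside one double $w$-strip $S$.

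Next I would argue that the puzzle is determined \emph{transversally} to $S$. The double $w$-strip $S$ is a bi-infinite strip of bounded height; what remains is to show that on each side of $S$ the tessellation is forced outward, layer by layer, with no choices. The key mechanism is again Lemma \ref{L -NPC}: along the boundary of $S$ the angles seen from inside $S$ are large enough (the $>\theta_0=\pi$ situation, or the boundary-ring analysis of Lemma \ref{L - n equals 1 or 2}) that at each boundary vertex the ring is uniquely pinned down, hence the next piece outward is uniquely determined up to colour-preserving isometry. I would do this in two steps: (i) identify the ``first layer'' of pieces attached to each long side of $S$ — these are forced triangles/lozenges dictated by the fact that the boundary vertices of a $2\times n$ strip cannot support a further lozenge in the component (that is precisely what bounds the width at $2$, the $5\pi/3$ and $4\pi/3$ exclusions of Lemma \ref{L - components are losenges}); (ii) show that once one layer is placed, every subsequent layer is forced by the same extension property, because the configuration stabilizes into a periodic pattern and each new vertex inherits an angle $>\theta_0$ from the previously placed layer. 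Feeding this into Lemma \ref{L -NPC} inductively gives uniqueness of the entire puzzle.

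The main obstacle, as usual with these puzzles, is step (ii): checking that the outward propagation is genuinely forced and does not bifurcate. Lemma \ref{L -NPC} only gives uniqueness when an angle exceeds $\theta_0$, so I would need to verify that after the first forced layer the emerging pattern always presents such a large angle at the relevant vertices — i.e. that no vertex of ``ambiguous'' type ($\leq\pi$ on all incident known sectors) ever arises on the advancing front. Concretely this is a finite local check in the $2$- or $3$-neighbourhood of the double $w$-strip, entirely analogous to the case analysis in the proof of Lemma \ref{L - n equals 1 or 2}; the content is that the double $w$-strip is ``two-sidedly forward analytic''. Once that local verification is in hand, the bi-infinite and outward inductions combine to pin down a single puzzle, and one exhibits it explicitly (it is the periodic puzzle obtained by stacking copies of the forced layers on both sides of $S$), completing the proof. $\qed$
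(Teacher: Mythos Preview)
Your approach has a genuine gap at the very first step. Lemma \ref{L - 2x2} applies to a $\diamond$-\emph{component} of type $2\times 2$, i.e.\ a maximal connected block of lozenges. A $2\times 2$ sub-block sitting inside a larger $2\times n$ component with $n\geq 3$ is not maximal, so the hypothesis of Lemma \ref{L - 2x2} fails and you cannot invoke it. Worse, the conclusion you draw is actually false: the double $w$-strip has only $2\times 2$ $\diamond$-components (separated by triangles), so a $2\times n$ block of lozenges with $n\geq 3$ simply does not fit inside it. And even if it did, Lemma \ref{L - extension 4-strip} tells you the double $w$-strip is contained in \emph{two} distinct puzzles, not one, so your transversal-uniqueness argument would still leave an ambiguity to resolve.

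Your picture of the final puzzle is also off: the puzzle of type $2\times n$ is not a periodic stacking of layers parallel to the strip. The paper's proof shows that the unique horizontal extension of the $2\times n$ component (via the $w$-blocks of Lemma \ref{L - w-blocks}) generates, upon extending upward and downward, new $w$-strips running in the north-east and south-west directions; these bound acute $\diamond$-sectors together with the horizontal $w$-strips, and it is this sector structure that forces uniqueness. So the correct mechanism is not ``two-sided forward analyticity'' of a single strip, but the appearance of transversal $w$-strips that trap the remaining region between forward-analytic directions. You should start from the $2\times n$ component itself, use Lemma \ref{L - w-blocks} directly on the $w$-blocks visible along its boundary (as in the proof of Lemma \ref{L - n equals 1 or 2}), and follow the resulting $w$-strips; Lemma \ref{L - 2x2} is not the right tool here.
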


\begin{proof} A $\diamond$-component $C$ of size $2\times n$ with $n\geq 3$
\[
\includegraphics[width=3cm]{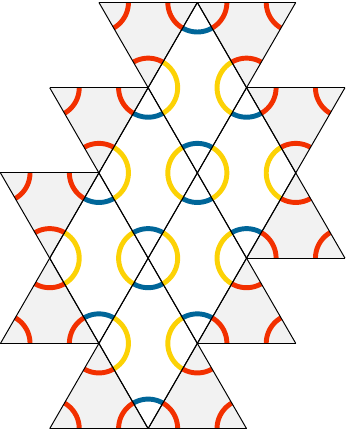}
\]
admits a unique extension of the form
\[
\includegraphics[width=9cm]{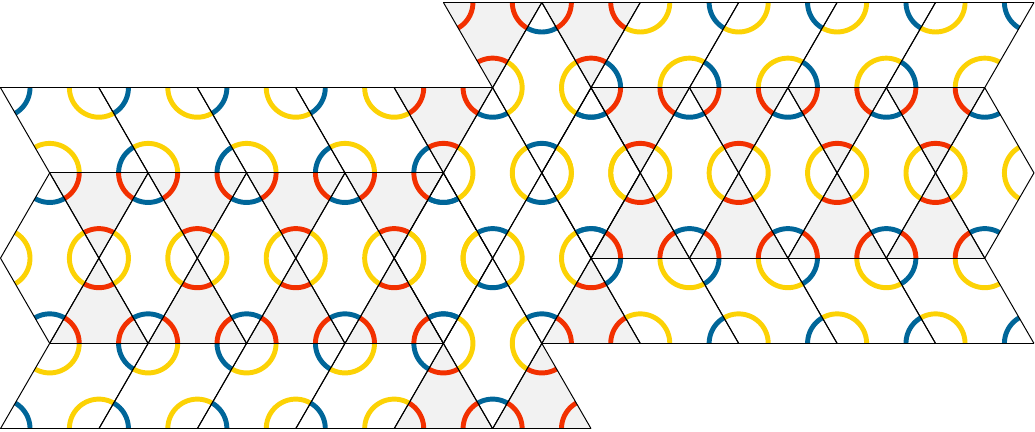}
\]
by Lemma \ref{L - w-blocks}.

Furthermore using the extension property we have a unique upward extension (and by symmetry  a unique downward extension) of the strip. This in turn exhibits a $w$-strip in the north-east direction (and by symmetry in the south-west direction) that edges an acute sector along with the horizontal $w$-strip,  implying that the horizontal strip belongs to at most one puzzle. A construction by induction then shows that the puzzle exists.
   
We will call this puzzle the \emph{puzzle of type $2\times n$}. \end{proof}

Here is a portion of the puzzle of type $2\times 4$:
\[
\includegraphics[width=9cm]{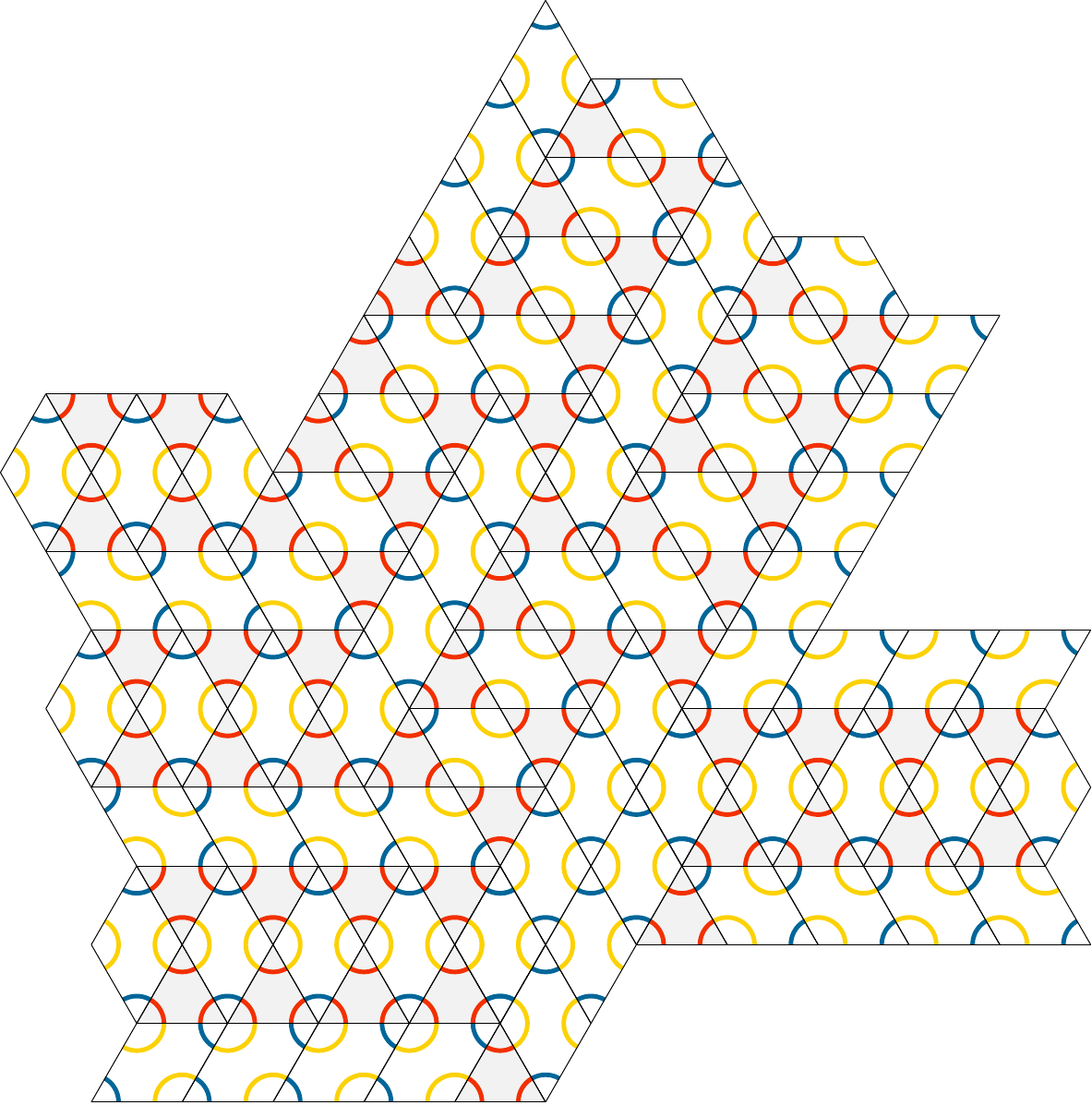}
\]

\begin{remark} Lemma \ref{L - 2x2} can be understood as the limiting case ``$n\to 2$'' of  Lemma \ref{L - 2xn puzzle}. 
\end{remark}

\begin{lemma}\label{L - component 1xn}
Assume that $P$ contains a finite $\diamond$-component $C$ of type $1\times n$ where $n\geq 3$. Then $C$ is included in a  strip $S$ of height $n$ in $P$ (infinite both to the left and to the right) of the form:
\[
\includegraphics[width=10cm]{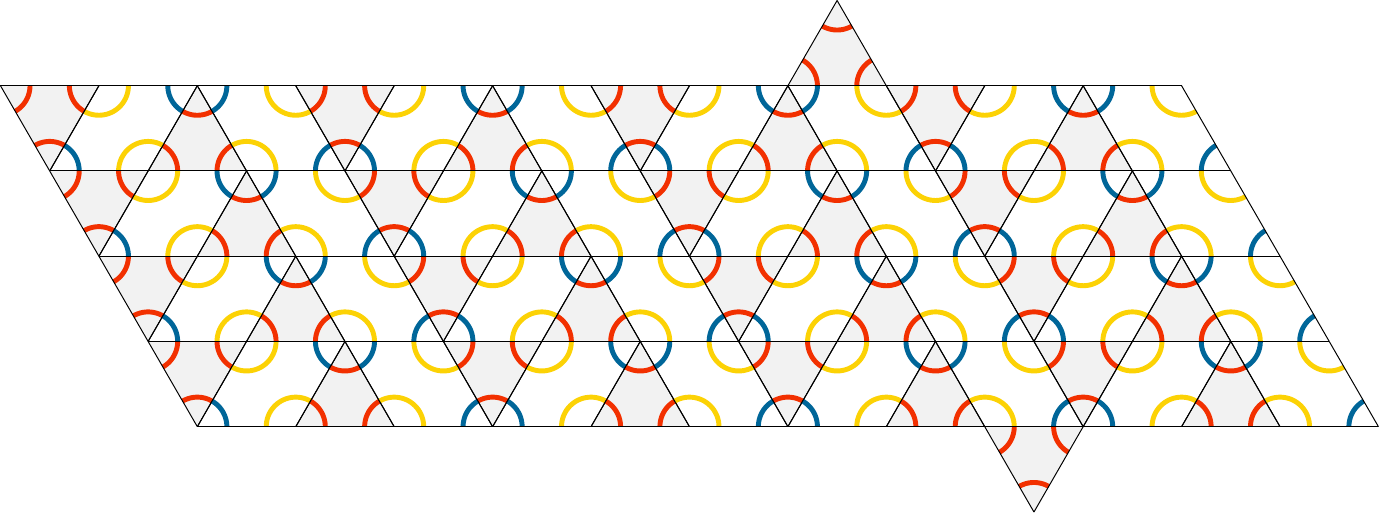}
\]
Furthermore, the following two conditions are satisfied: 
\begin{itemize}
\item [a)] the $\diamond$-components of $S$ parallels to $C$  are all of type $1\times n$, except for at most one component of type  $2\times n$. If there is such a component, then $P$ is the puzzle of type $2\times n$, otherwise $S$ is as indicated in the figure above. 
\item [b)] every $\diamond$-component of $S$ remains a $\diamond$-component in $P$ (otherwise said, it is adjacent to triangles in $P\setminus S$ on both sides, as indicated in the figure). 
\end{itemize}
\end{lemma}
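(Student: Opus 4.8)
```latex
\noindent\textbf{Proof plan.}
The plan is to start from the finite $\diamond$-component $C$ of type $1\times n$ and analyse its boundary using the local rules, exactly as in the proofs of Lemmas \ref{L - n equals 1 or 2} and \ref{L - 2xn puzzle}. By Lemma \ref{L - components are losenges} the inner angles along $\del C$ are $\pi/3$, $2\pi/3$ or $\pi$, so $C$ is a $1\times n$ parallelogram with two obtuse and two acute corners. First I would determine, at each boundary vertex of $C$, which ring must be used: along the two long sides of $C$ the only ring compatible with a single row of lozenges forces the third ring, and at the two obtuse corners the second ring (as in Lemma \ref{L - n equals 1 or 2}); this fixes the triangles immediately adjacent to $C$ by Lemma \ref{L -NPC} (the $\theta_0$-extension property with $\theta_0=\pi$). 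The resulting configuration along each long side of $C$ is a band of triangles which, together with $C$, contains $w$-blocks; applying Lemma \ref{L - w-blocks} propagates these into $w$-strips running parallel to the short sides of $C$, and applying the extension property across the short sides of $C$ propagates the picture in the direction perpendicular to $C$. This builds the strip $S$ of height $n$, bi-infinite in the direction parallel to the long sides of $C$.

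Next I would establish that $S$ has the stated uniform structure, i.e.\ that the successive $\diamond$-components of $S$ parallel to $C$ are again of type $1\times n$. The key point is that once the two bounding bands of triangles of a $1\times n$ component are in place, the extension property determines the next row of cells on each side: either it is again a single row of lozenges (a $1\times n$ component) bordered by triangles, or the lozenges double up. A doubling would produce a $2\times n$ component, and by Lemma \ref{L - 2xn puzzle} (resp.\ Lemma \ref{L - 2x2} if $n=2$, but here $n\ge 3$) a $2\times n$ component is contained in a \emph{unique} puzzle, namely the puzzle of type $2\times n$; so if any such component occurs then $P$ is that puzzle, and I would check directly that in the puzzle of type $2\times n$ there is exactly one $2\times n$ component and all the others parallel to it are $1\times n$, giving the ``at most one'' clause of a). If no doubling ever occurs, iterating the extension in both perpendicular directions yields precisely the periodic strip drawn in the figure, proving a). Part b) then follows because the bands of triangles we produced on the two long sides of each component of $S$ lie in $P\setminus S$ (they are triangles, not lozenges of $S$), so each $\diamond$-component of $S$ is flanked by triangles of $P$ on both sides and hence remains a maximal $\diamond$-gallery, i.e.\ a $\diamond$-component, in $P$.

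The main obstacle I expect is the bookkeeping in the first paragraph: verifying that the triangle bands forced along the long sides of $C$ genuinely contain $w$-blocks in the correct orientation so that Lemma \ref{L - w-blocks} applies, and that the $w$-strips so produced are consistent with (rather than transversal to) the extension coming from the short ends of $C$ --- here one uses the remark that two $w$-strips cannot cross transversally in the forward direction. A secondary subtlety is the interaction with doubling: one must rule out two separate doublings (one could a priori occur above $C$ and one below), which is exactly what the uniqueness statement of Lemma \ref{L - 2xn puzzle} delivers, since two $2\times n$ components would force two incompatible copies of the puzzle of type $2\times n$. Once these local compatibility checks are done, the global strip $S$ and the dichotomy in a), together with b), follow by a straightforward induction on the rows of $S$.
```
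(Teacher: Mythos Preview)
Your strategy is the paper's: force the triangle collar of $C$ from the ring constraints and Lemma~\ref{L -NPC}, extend inductively to a bi-infinite strip $S$ of height $n$, observe that the parallel $\diamond$-components of $S$ are still $\diamond$-components of $P$ (hence, by Lemma~\ref{L - n equals 1 or 2} and $n\ge 3$, of width $1$ or $2$), and invoke Lemma~\ref{L - 2xn puzzle} for the ``at most one $2\times n$'' clause. Two execution points, however, diverge from the paper and should be fixed.

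First, the paper does \emph{not} use $w$-blocks to build $S$. The inductive step is pure local extension: the triangle collar of one $1\times n$ column forces the next column of lozenges directly via the rings. Your flagged ``main obstacle'' --- checking that the $1\times n$ collar actually contains $w$-blocks in the right orientation --- is genuine, and the paper sidesteps it entirely. The $w$-strip non-transversality is used only in the paper's \emph{second}, alternative proof of the ``at most one $2\times n$'' clause: a $2\times n$ component (not a $1\times n$ one) spawns two $w$-blocks on its east side with transversally crossing forward directions, so two such components would force crossing $w$-strips, a contradiction.

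Second, your orientations are reversed throughout. The strip $S$ is bi-infinite in the direction \emph{transversal} to the long sides of $C$ (so $C$ spans the full height of $S$, and the ``parallel components'' are the other full-height columns). Consequently, the triangles relevant to~b) --- those lying in $P\setminus S$ --- sit along the two \emph{short} sides of each component (i.e.\ just above and below $S$), not along the long sides as you wrote; the triangles along the long sides are the separators \emph{inside} $S$. Once you straighten out the directions and drop the $w$-block step in favor of direct extension, your argument coincides with the paper's.
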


The  strip described in the figure above is called the \emph{bi-infinite strip of type $1\times n$} (all $\diamond$-component of the strip are of type $1\times n$).

\begin{proof}
Starting with a $\diamond$-component in $P$ of size $1\times n$
\[
\includegraphics[width=5cm]{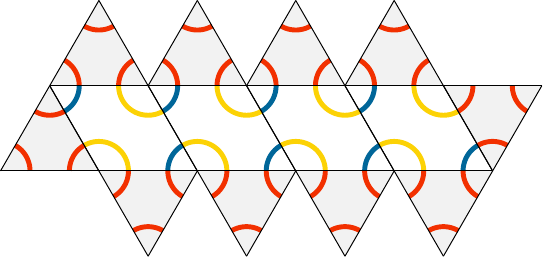}
\]
there is a unique extension of the form
\[
\includegraphics[width=5cm]{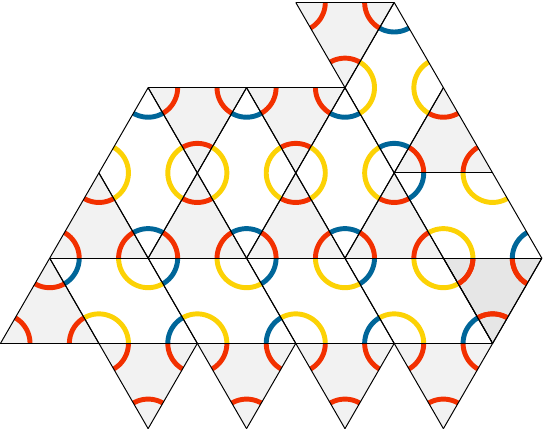}
\]
which by induction  form a half strip $S_\uparrow\supset C$ in the upward direction, together with the right 1-neighbourhood of it. The same argument applies  in downward direction by symmetry, which delivers the indicated strip $S = S_\uparrow\cup S_\downarrow \supset C$ together with a 1-neighbourhood of it on both sides. 

Inspection of the neighbourhood   shows that the $\diamond$-components parallel to $C$ are $\diamond$-component in $P$, which therefore are of the form $m\times n$ for $m=1,2$.

We have to prove that a component of type $2\times n$ can only appear once. 
This can be done directly noticing that the puzzle of type $2\times n$ actually contains only one $\diamond$-component of type $2\times n$, or with the following direct proof. Suppose we have two components, without loss of generality we may assume that they are separated only by components of type  $1\times n$.
\[
\includegraphics[width=10cm]{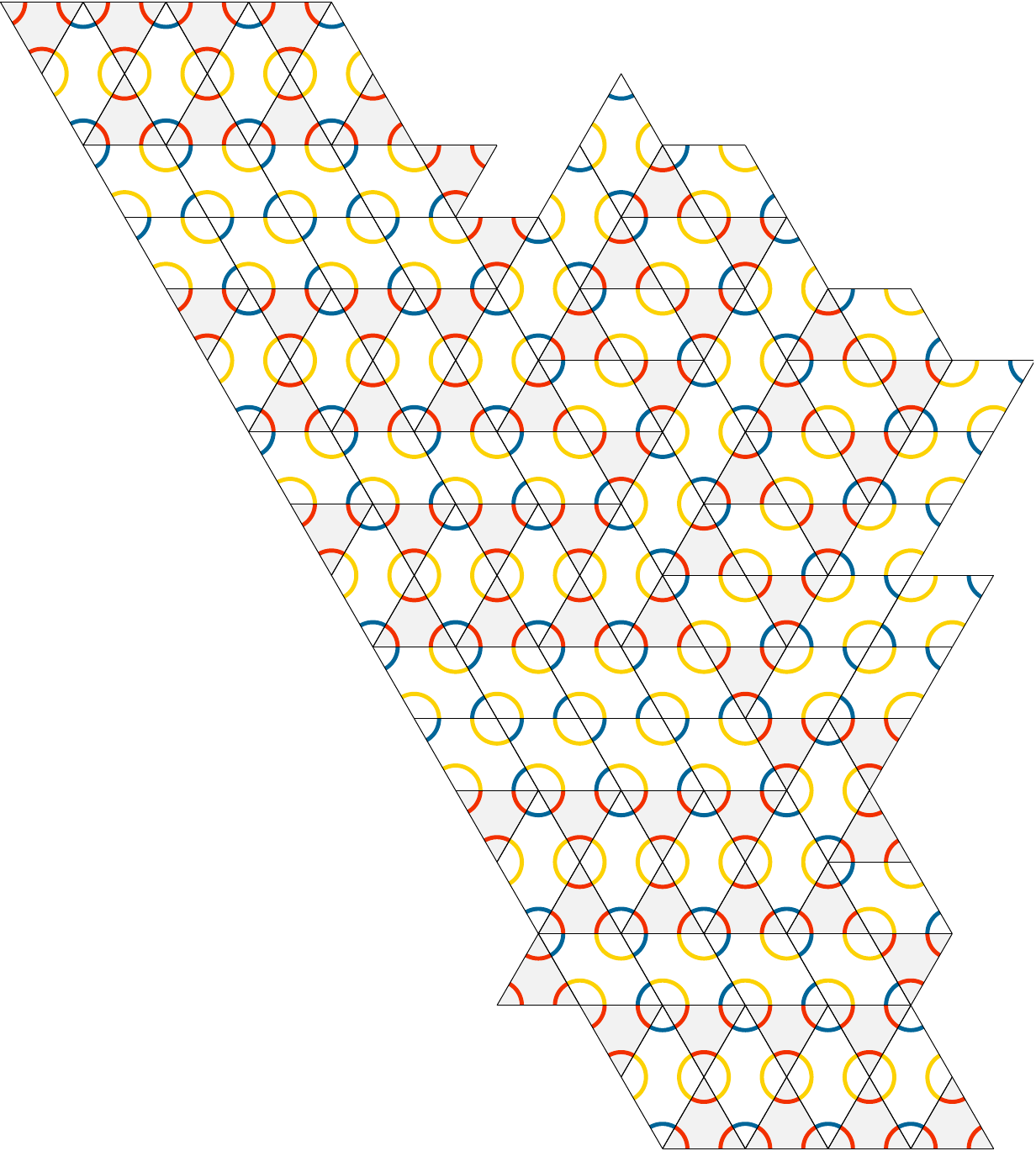}
\]
Note that the eastern neighbourhood of any $\diamond$-component of type $2\times n$ contains two $w$-blocks whose \emph{forward}  direction intersect transversally. 

By Lemma \ref{L - w-blocks}, $w$-blocks extend uniquely  in the forward direction. This contradicts  the fact  that $w$-strips do not intersect transversally, and proves the result.  

The figure above illustrates the argument. We have  indicated the completion in the case of two $\diamond$-components of type $2\times n$  separated by precisely one $\diamond$-component of type $1\times n$. In this case a direct local contradiction appears in the 1-neighbourhood of the upmost component of type $2\times n$.
 \end{proof}

\begin{lemma}\label{L- 2x1}
Assume that every  component is finite and of type $1\times n$ with $n=1,2$. 
Then every finite component is of type $1\times 2$, and corresponds to a  unique corresponding $\Aut(F_2)$ puzzle.
\end{lemma}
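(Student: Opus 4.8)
The plan is to proceed in two stages. First I will show that, under the standing hypothesis, $P$ has no $\diamond$-component of type $1\times 1$; since by hypothesis every $\diamond$-component is finite of type $1\times n$ with $n\in\{1,2\}$, it then follows that every $\diamond$-component of $P$ is of type $1\times 2$. Second, I will show that a puzzle all of whose $\diamond$-components are of type $1\times 2$ is entirely forced, hence unique, and that the forced configuration genuinely tessellates $\IR^2$.

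For the first stage, suppose toward a contradiction that $L$ is an isolated lozenge, i.e.\ a $\diamond$-component of type $1\times 1$, so that the four tiles of $P$ meeting $L$ along its edges are triangles. At an obtuse vertex $v$ of $L$ the yellow sector of angle $2\pi/3$ coming from $L$, together with the two sectors of angle $\pi/3$ coming from the two triangles of $P$ incident to $v$ along the edges of $L$, form a coloured arc of length $4\pi/3>\pi=\theta_0$. By Lemma \ref{L -NPC} the ring at $v$, and then a whole neighbourhood of $v$, is uniquely determined; running the same argument at the other obtuse vertex of $L$, and (after adjoining one more forced triangle) at its two acute vertices, one forces a bounded neighbourhood of $L$. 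The key step is then to check, by a finite inspection, that this forced neighbourhood cannot be completed inside an $\Aut(F_2)$ puzzle without producing a configuration incompatible with the hypothesis: either an inconsistent ring, or a $\diamond$-component that is infinite or of type $m\times n$ with $\max(m,n)\geq 3$ or with $m=n=2$ (which typically shows up through a $w$-block, forced by Lemma \ref{L - w-blocks} into a $w$-strip that carries extra lozenges). Either way we reach a contradiction, so $P$ has no $1\times 1$ component, and every $\diamond$-component of $P$ is of type $1\times 2$.

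For the second stage, fix a $\diamond$-component $D$ of type $1\times 2$. Exactly as above, at each obtuse vertex of the two lozenges forming $D$ a coloured arc of length $4\pi/3>\theta_0$ is available, so by Lemma \ref{L -NPC} a neighbourhood of $D$ is forced. This neighbourhood exhibits $w$-blocks running alongside $D$; by Lemma \ref{L - w-blocks}, together with the fact that two $w$-strips cannot intersect transversally, the layer of lozenge components adjacent to $D$ on each side is forced, and consists again of $1\times 2$ components. Iterating and arguing by induction — as in the proofs of Lemmas \ref{L - 2xn puzzle} and \ref{L - component 1xn} — one builds the whole puzzle layer by layer with no remaining choices, which gives uniqueness; a direct verification that the construction closes up consistently yields existence.

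The step I expect to be the main obstacle is the exclusion of $1\times 1$ components: this is a finite but somewhat delicate local case analysis, in which one must verify that once the $\theta_0$-extension property has been used to fill the forced part of a neighbourhood of an isolated lozenge, every completion either is immediately inconsistent or reproduces one of the previously classified configurations, so that no genuinely new local picture survives. The second stage is routine given the earlier lemmas, the only standard subtlety being, as in Lemmas \ref{L - 2xn puzzle} and \ref{L - component 1xn}, to check that the inductively constructed puzzle has no monodromy obstruction and really tiles $\IR^2$.
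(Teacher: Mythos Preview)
Your proposal is essentially the paper's argument: exclude $1\times 1$ components and then force the puzzle from a $1\times 2$ component in the spirit of Lemma~\ref{L - component 1xn}. The only difference is one of economy: the paper merely checks that a puzzle cannot have \emph{all} $\diamond$-components of type $1\times 1$, thereby securing a single $1\times 2$ starting point, and then reads off that every component is $1\times 2$ from the uniquely forced puzzle---whereas you front-load that conclusion by attempting to rule out any individual $1\times 1$ component directly, which is more work than necessary.
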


\begin{proof} It is easy to check that there is no puzzle in which every $\diamond$-component is of type $1\times 1$. The proof of the lemma, starting from a component of type $1\times 2$, is then in the same spirit as that of Lemma \ref{L - component 1xn}, given the fact that every component has type $1\times n$ with $n\leq 2$.  The resulting puzzle of type $1\times2$ is
\[
\includegraphics[width=10cm]{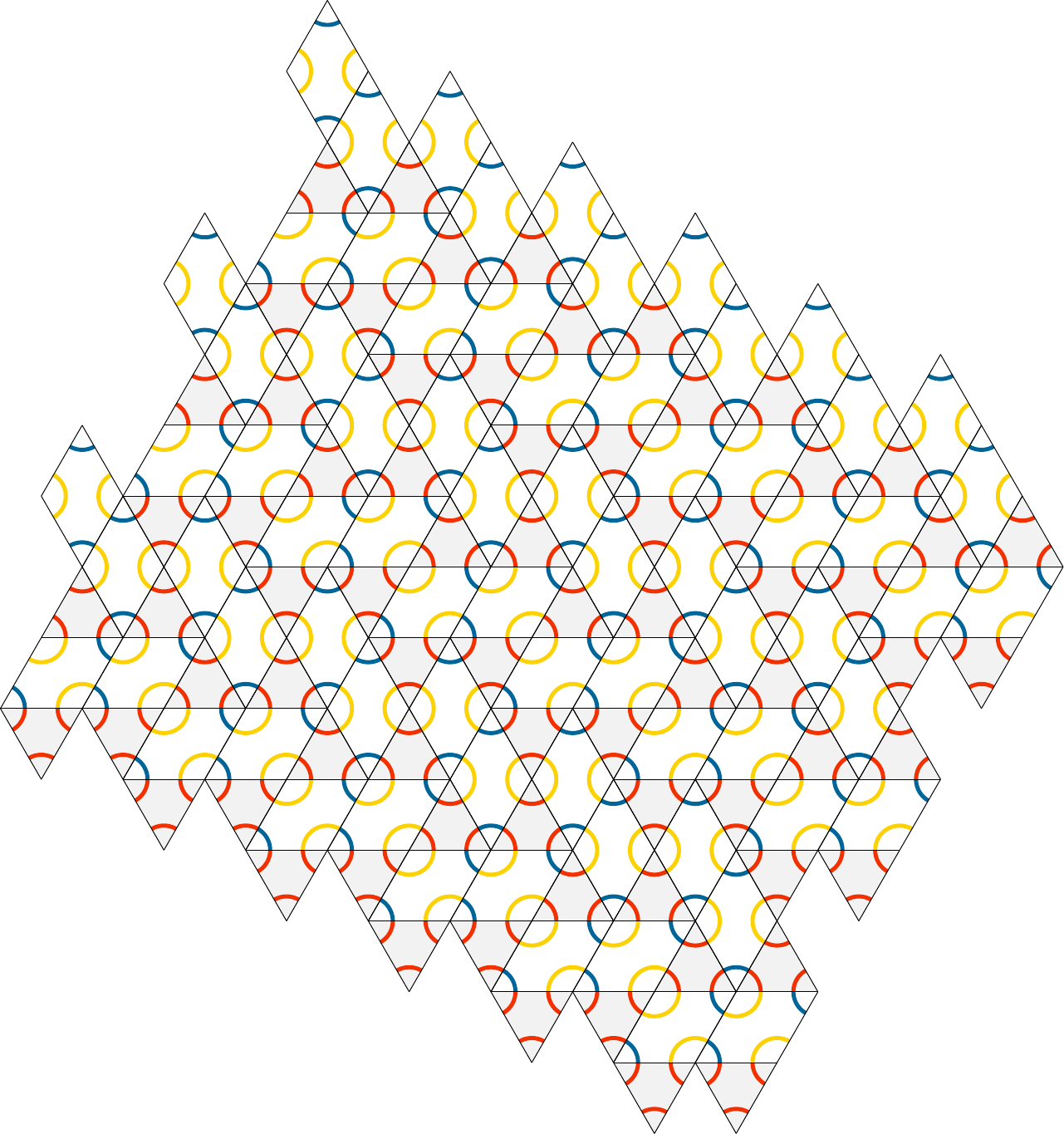}
\]
\end{proof}

We now turn to infinite $\diamond$-components. 

\begin{lemma}\label{L - infinite a b c cases}
An infinite $\diamond$-component is either
\begin{itemize} 
\item[a)] the  tessellation of the plane by lozenges (called the $\diamond$-plane) or half of it (called the half $\diamond$-plane)
\item[b)] a bi-infinite strip of lozenges ($\diamond$-strip) or half of it (semi-infinite $\diamond$-strip)
\item[c)] a $\diamond$-sector (with either an acute or an obtuse angle)
\end{itemize}
\end{lemma}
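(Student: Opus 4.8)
The plan is to analyze an infinite $\diamond$-component $C$ via its boundary $\del C$, using Lemma \ref{L - components are losenges}, which tells us every inner angle of $\del C$ is $\pi/3$, $2\pi/3$, or $\pi$. First I would dispose of the case $\del C = \emptyset$: then $C$ is all of $\IR^2$, i.e.\ the $\diamond$-plane, case a). So assume $\del C \neq \emptyset$. Since $C$ is a simply connected (being a union of lozenges glued along edges with no holes — one should note puzzles are planar tessellations so components have no bounded complementary region of $\diamond$-type, else that region would be forced to be triangles which is handled separately) region tiled by lozenges, each boundary component of $\del C$ is a bi-infinite polygonal line or a polygon; but a bounded polygon would be a finite component, already classified, so every boundary component of the infinite $C$ is an unbounded polygonal path with turning angles (exterior angles) in $\{+\pi/3, 0, -\pi/3\}$ (corresponding to inner angles $2\pi/3$, $\pi$, $\pi/3$).

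Next I would run a ``total turning'' / Gauss--Bonnet-type argument on such a bi-infinite boundary path $\gamma$. Because $C$ is nonpositively curved in the puzzle sense (the $\theta_0 = \pi$ extension property, Lemma \ref{L -NPC}), the relevant discrete curvature is nonnegative on the $\diamond$-side, so along $\gamma$ the cumulative exterior turning is bounded. The key dichotomy: either $\gamma$ has net turning $0$ (asymptotically straight in both directions) or net turning bounded but with a single ``bend'' accumulating some positive multiple of $\pi/3$. If there are two boundary components each asymptotically straight, they bound a $\diamond$-strip, giving case b); if there is one boundary component which is asymptotically straight, $C$ is a half-plane, case a) (half $\diamond$-plane); if the boundary ``closes up at infinity'' into a single bent path with one vertex of angle $\pi/3$ or $2\pi/3$ and two straight rays, $C$ is a $\diamond$-sector (acute if one $\pi/3$ turn, obtuse if the turns sum to a $2\pi/3$ exterior angle, equivalently an inner reflex situation realized by the allowed rings), case c). The semi-infinite $\diamond$-strip arises when one boundary component is straight bi-infinite and a second is a straight ray terminating at a $\pi/3$ or $2\pi/3$ vertex — i.e.\ a strip with one ``capped'' end. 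One must also check, using Lemma \ref{L - n equals 1 or 2} (which says a finite component has a bounded side $\le 2$), that an infinite component cannot be, say, a half-plane-minus-a-sector hybrid or an ``L-shape''; an interior $\pi/3$ or $2\pi/3$ turn on two different boundary components would force, three rings out, a local configuration not extendable in $P$ — this is the same forbidden-block analysis as in Lemmas \ref{L - n equals 1 or 2} and \ref{L - component 1xn}.

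Concretely, the steps in order: (1) if $\del C = \emptyset$ get the $\diamond$-plane; (2) show each component of $\del C$ is an unbounded polygonal path with exterior turns in $\{-\pi/3,0,+\pi/3\}$; (3) show each such path has at most one nonzero turn — this is the crux, and uses that $w$-strips emanating from two distinct bends would intersect transversally, contradicting Lemma \ref{L - w-blocks} and the remark that $w$-strips do not meet transversally (exactly the mechanism in the proof of Lemma \ref{L - component 1xn}); (4) enumerate the possibilities for $(\text{number of boundary paths}, \text{bends})$: zero or one straight path $\Rightarrow$ (half-)$\diamond$-plane; two straight paths $\Rightarrow$ $\diamond$-strip; one straight path plus one ray with an endpoint bend $\Rightarrow$ semi-infinite $\diamond$-strip; one path with a single bend (no other boundary) $\Rightarrow$ $\diamond$-sector, acute or obtuse according to whether the bend is a $\pi/3$ inner angle or realizes the $4\pi/3$ boundary configuration permitted by the second ring (cf.\ the argument in Lemma \ref{L - components are losenges} ruling out $5\pi/3$ but allowing the obtuse sector).

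I expect the main obstacle to be step (3), ruling out a boundary path with two or more bends: one must show that once the $\diamond$-component turns, the forced $w$-blocks in its neighbourhood propagate into $w$-strips whose forward directions collide, and carefully handle the several relative positions of the two bends (same side vs.\ opposite sides of the component, and the sign of each turn), each requiring a short local-extension contradiction in a bounded neighbourhood exactly as in the proofs of Lemmas \ref{L - n equals 1 or 2} and \ref{L - component 1xn}. A secondary subtlety is making precise, without invoking smooth Gauss--Bonnet, that an asymptotically-straight lozenge boundary is genuinely straight (eventually no turns) rather than turning infinitely often with net turning zero; this follows because each $+\pi/3$ turn must eventually be compensated by a $-\pi/3$ turn, creating a finite sub-configuration that is again a forbidden or already-classified block, forcing all turns to cancel locally and hence (by the forward-analyticity of $w$-blocks) to not occur at all.
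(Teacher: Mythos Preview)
Your plan rests on a sign error that makes the whole enterprise far harder than it is. You write that the exterior turning angles along $\del C$ lie in $\{+\pi/3,\,0,\,-\pi/3\}$, corresponding to inner angles $2\pi/3,\,\pi,\,\pi/3$. But the exterior angle at an inner angle of $\pi/3$ is $\pi-\pi/3=+2\pi/3$, not $-\pi/3$. So by Lemma~\ref{L - components are losenges} every exterior turn is \emph{nonnegative}: the boundary curve is convex. This is exactly what the paper's one-line proof invokes, and once you see it, no Gauss--Bonnet bookkeeping, no $w$-strip transversality, and no forbidden-block analysis is needed.

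With convexity in hand the classification is immediate. The edges of $\del C$ lie in only the two lozenge edge-directions $\vec u,\vec v$; going around the convex boundary the direction can only advance cyclically through $\vec u\to\vec v\to-\vec u\to-\vec v\to\vec u$, with successive exterior turns alternating $\pi/3,\,2\pi/3$. A bi-infinite convex boundary arc has total exterior turn at most $\pi$, so it has at most two corners, and listing the cases gives: no boundary ($\diamond$-plane), one line (half $\diamond$-plane), one corner of inner angle $\pi/3$ or $2\pi/3$ (acute or obtuse $\diamond$-sector), two corners of inner angles $\pi/3$ and $2\pi/3$ (semi-infinite $\diamond$-strip), or two parallel lines ($\diamond$-strip). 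Note in particular that your step~(3) (``at most one nonzero turn'') is false as stated --- the semi-infinite strip has a single U-shaped boundary path with two corners --- and your remark that the obtuse sector ``realizes the $4\pi/3$ boundary configuration'' contradicts Lemma~\ref{L - components are losenges}, which explicitly rules out inner angle $4\pi/3$; the obtuse sector has inner angle $2\pi/3$ at its apex.
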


\begin{proof}
This follows from Lemma \ref{L - components are losenges}. 
\end{proof}

Every (semi-infinite) $\diamond$-strip of lozenges is bounded by a (semi-infinite) 2-strip. In particular both half $\diamond$-planes and bi-infinite $\diamond$-strip give rise to  an alternating 2-strip/$\diamond$-strip  $\Aut(F_2)$ puzzles.

Note that:

\begin{lemma}
If $P$ contains a semi-infinite $\diamond$-strip $S$ of height $n$, then $n=1$ or $n=2$.
\end{lemma}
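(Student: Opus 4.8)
The statement asserts that a semi-infinite $\diamond$-strip $S$ in an $\Aut(F_2)$ puzzle $P$ must have height $n \leq 2$. The plan is to argue by contradiction: suppose $S$ has height $n \geq 3$. I would first examine the structure of $S$ itself. A $\diamond$-strip of height $n$ is a bi-infinite-in-one-direction band of lozenges, $n$ lozenges thick, and by Lemma~\ref{L - components are losenges} the inner angles along $\partial S$ can only be $\pi/3$, $2\pi/3$, or $\pi$; combined with the $\theta_0$-extension property (Lemma~\ref{L -NPC}), the boundary behaviour of $S$ is rigidly controlled. In particular, since $S$ is semi-infinite, it has a ``capping'' end, and along the long sides the lozenges of $S$ meet triangles of $P \setminus S$ in a forced pattern.

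The key idea is to locate $w$-blocks near the boundary of $S$ and invoke their forward-analyticity. Just as in the proof of Lemma~\ref{L - component 1xn}, I expect that the neighbourhood of a thick $\diamond$-component forces the appearance of $w$-blocks whose forward directions run transversally to one another (the thicker the strip, the more room there is for two such $w$-blocks to appear on, say, the eastern side at different heights). By Lemma~\ref{L - w-blocks}, each $w$-block extends to a unique $w$-strip, and the remark following Lemma~\ref{L - w-blocks} (two $w$-strips cannot intersect transversally in the forward direction) then yields the contradiction. Concretely, I would: (1) fix a lozenge near the capped end of $S$ and work out the forced tiling in its $2$- or $3$-neighbourhood using Lemmas~\ref{L -NPC} and~\ref{L - components are losenges}; (2) identify two $w$-blocks in this neighbourhood pointing in transverse forward directions; (3) extend both to $w$-strips via Lemma~\ref{L - w-blocks}; (4) derive the transverse intersection, contradicting the non-intersection property of $w$-strips. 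Since a strip of height $n \geq 3$ admits, by Lemma~\ref{L - n equals 1 or 2} reasoning applied locally, essentially the $m \times n$-parallelogram obstruction, one could alternatively reduce directly to the $m \geq 3$ case already ruled out in Lemma~\ref{L - n equals 1 or 2}: the capped end of a height-$n$ semi-infinite $\diamond$-strip looks locally like the end of a $3 \times n$ parallelogram, whose $3$-neighbourhood does not extend in $P$.

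The main obstacle will be the bookkeeping around the capped end: one must be careful that ``semi-infinite'' means the strip genuinely terminates (so there is a boundary segment transverse to the long direction), and that the forced local picture there really does reproduce the obstruction of Lemma~\ref{L - n equals 1 or 2} or exhibit the transverse $w$-blocks. If instead $S$ were bi-infinite this argument would not apply (indeed bi-infinite $\diamond$-strips of all heights occur inside the puzzles of type $2\times n$ and $1\times n$), so the proof must genuinely use the existence of the end. I expect the cleanest writeup to be: ``This follows from Lemma~\ref{L - n equals 1 or 2} applied to the $\diamond$-component obtained by looking at the capped end, together with Lemma~\ref{L - w-blocks},'' with a short figure showing the forced $3$-neighbourhood of the end and the two transverse $w$-blocks it contains.
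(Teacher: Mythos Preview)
Your proposal is correct and aligns with the paper: the paper's proof is the single sentence ``The proof is similar to that of Lemma~\ref{L - n equals 1 or 2},'' which is precisely your second (alternative) route --- the finite end of a semi-infinite $\diamond$-strip of height $n\geq 3$ reproduces locally the corner of an $m\times n$ parallelogram with $m\geq 3$, and the $3$-neighbourhood obstruction from Lemma~\ref{L - n equals 1 or 2} applies verbatim. Your first route via transverse $w$-blocks is not wrong, but it simply unpacks what is already inside the proof of Lemma~\ref{L - n equals 1 or 2}; the paper sees no need to repeat it. One small correction: bi-infinite $\diamond$-strips of arbitrary height occur in the alternating $2$-strip/$\diamond$-strip series (case A), not in the $1\times n$ or $2\times n$ puzzles, whose $\diamond$-components are finite.
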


The proof  is similar  to that of Lemma \ref{L - n equals 1 or 2}. 

\begin{lemma}\label{L - 2 x infty puzzle}
The semi-infinite $\diamond$-strips of type $2\times \infty$ can be extended in precisely two ways: a puzzle called the \emph{star puzzle of type $2\times \infty$}, or a  half-puzzle whose boundary is the following strip of height 3:
\[
\includegraphics[width=8cm]{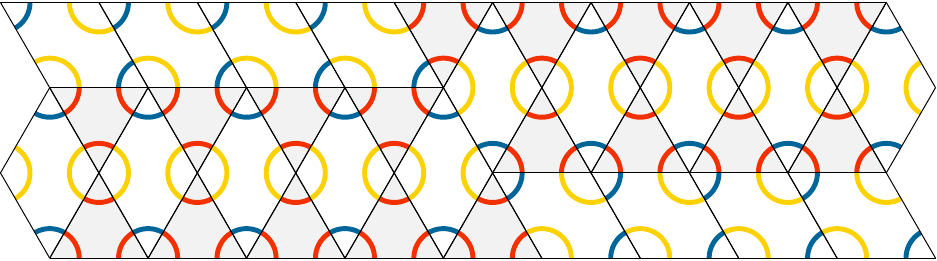}
\]  
We refer to the above strip as ``the 3-strip'', and 
the half-puzzle containing it will be called the \emph{half puzzle of type $2\times \infty$}.
\end{lemma}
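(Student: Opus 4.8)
The plan is to follow the template of Lemmas~\ref{L - 2x2}--\ref{L - 2xn puzzle}: away from the terminating end of $S$ the surrounding puzzle is entirely forced, and all of the freedom is concentrated in a bounded neighbourhood of that end, where a single ring-completion choice bifurcates into exactly two cases. Both cases must then be shown to be realisable (existence of the star puzzle and of the half puzzle) and to propagate uniquely, so that ``precisely two'' is exact.

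First I would record the forced part. As noted after Lemma~\ref{L - infinite a b c cases}, a semi-infinite $\diamond$-strip is flanked along each long side by a semi-infinite $2$-strip, and since the two long sides of $S$ are parallel the two flanking $2$-strips are parallel as well; alternatively this follows directly from Lemma~\ref{L - w-blocks}, exactly as in the proof of Lemma~\ref{L - 2xn puzzle} (the $w$-blocks along the long sides of $S$ propagate, in the limiting case ``$n\to\infty$'', to the two bounding $2$-strips). Next, by Lemma~\ref{L - components are losenges} the inner boundary angles of the $\diamond$-component $S$ lie in $\{\pi/3,2\pi/3,\pi\}$; together with the shape of a height-$2$ strip this pins down the west end of $S$, and then Lemma~\ref{L -NPC} forces a definite finite patch of triangles capping that end, up to one vertex $v$ at which the realised angle is strictly less than $\theta_0=\pi$, so that Lemma~\ref{L -NPC} does not yet determine the completion.

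Second I would enumerate the admissible completions of the ring at $v$ from the prescribed Shapes subject to the prescribed Rings. Since the partial ring at $v$ already fixes most of the circle and only three rings are allowed, this is a finite check, which I expect to yield exactly two possibilities. In the first, a small fan of triangles completes the cap, and from then on every piece is forced by Lemma~\ref{L -NPC} (each step exhibits a reflex angle or a fresh $w$-block, and $w$-blocks propagate uniquely by Lemma~\ref{L - w-blocks}); the configuration then closes up symmetrically around the cap into a full tessellation of $\IR^2$, the star puzzle of type $2\times\infty$. In the second, the completion at $v$ launches an additional $w$-strip whose outer boundary, propagated by Lemmas~\ref{L - w-blocks} and~\ref{L -NPC}, stabilises to the height-$3$ strip displayed in the statement; all of $P$ between $S$ and that $3$-strip is thereby forced, giving the half puzzle of type $2\times\infty$, whereas the far side of the $3$-strip is not constrained by $S$. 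Uniqueness within each case is built into the propagation, as in Lemma~\ref{L - 2xn puzzle} (``a construction by induction then shows that the puzzle exists'', and transversal $w$-strips are impossible by Lemma~\ref{L - w-blocks}); combined with the dichotomy at $v$ this gives the claimed count of two.

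The main obstacle is the bounded-neighbourhood analysis at the terminating end: one must locate the branching vertex $v$ correctly, check that its ring admits neither one nor three but exactly two completions, and --- the genuinely nontrivial point --- verify that the first branch really closes up into a consistent full puzzle (equivalently, exhibit the star puzzle) rather than eventually being forced into a ring that is not on the list, and dually that the strip produced in the second branch is the permitted $3$-strip. As in the proof of Lemma~\ref{L - component 1xn}, the only things that can obstruct these verifications are a transversal collision of two $w$-strips or the appearance of a ring outside the prescribed set, and ruling both out is a finite inspection of a $1$- or $2$-neighbourhood.
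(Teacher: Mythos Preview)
Your plan is correct and matches the paper's own proof in all essentials: establish that the puzzle is forced away from the terminating end of $S$, isolate a single bifurcation near that end, and verify that each branch propagates uniquely (one to the full star puzzle, the other to the half puzzle bounded by the $3$-strip). The only cosmetic difference is in which earlier lemma is invoked to set up the forced region: the paper appeals to the argument of Lemma~\ref{L - component 1xn} to build a unique sector ``in the upper direction'' containing a double $w$-strip, and phrases the bifurcation as the two possible orientations of a lozenge relative to the adjacent south-east triangles, whereas you invoke the flanking $2$-strips via Lemma~\ref{L - w-blocks} (as in Lemma~\ref{L - 2xn puzzle}) and phrase the bifurcation as a ring-completion choice at a vertex $v$ --- but these are two descriptions of the same forced region and the same branching point.
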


\begin{proof}
Take a semi-infinite (horizontal) $\diamond$-strip $S$ of height $2$.
The proof of Lemma \ref{L - component 1xn} shows that there exits a  unique sector extending $S$ ``in the upper direction''. This sector contains a double $w$-strip as follows:
\[
\includegraphics[width=8cm]{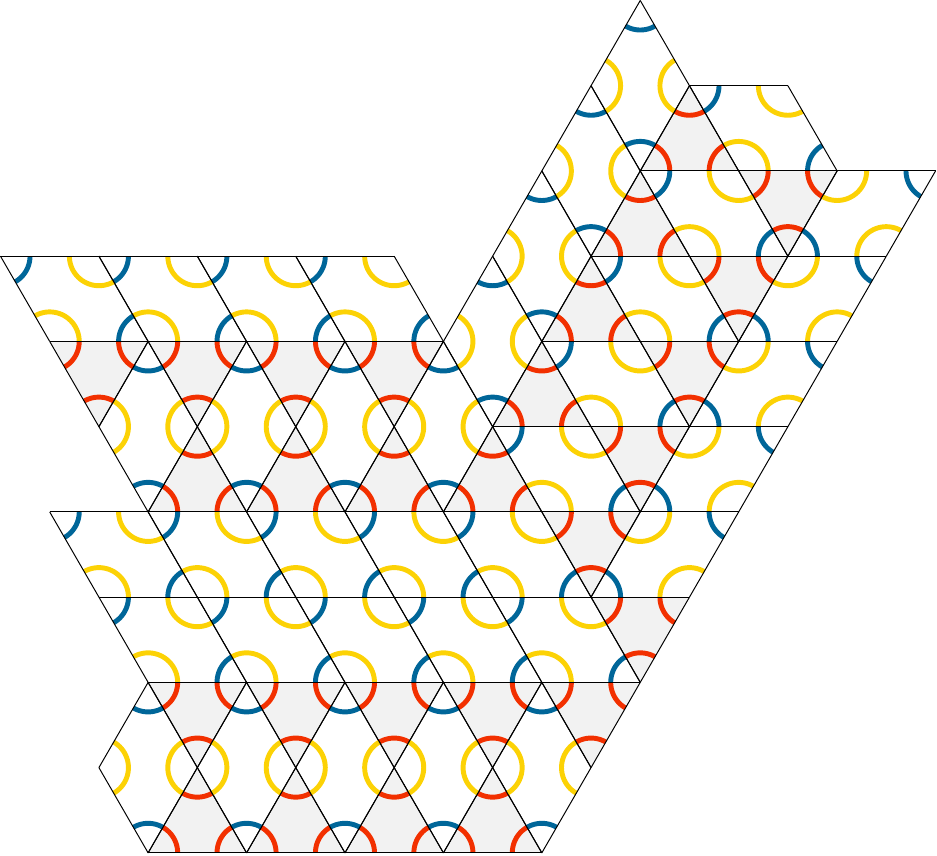}
\]
There are two  extensions of it, corresponding to the two ways a lozenge can be oriented with respect to adjacent the south-east triangles in the figure. 

We have already encountered the first extension. It can be derived from the proof of Lemma \ref{L - 2xn puzzle} or Lemma \ref{L - component 1xn} and appears as a ``half of'' the limit (in the Gromov-Hausdorff sense, say) of  the puzzle of type $2\times n$ as $n\to \infty$:
\[
\includegraphics[width=8cm]{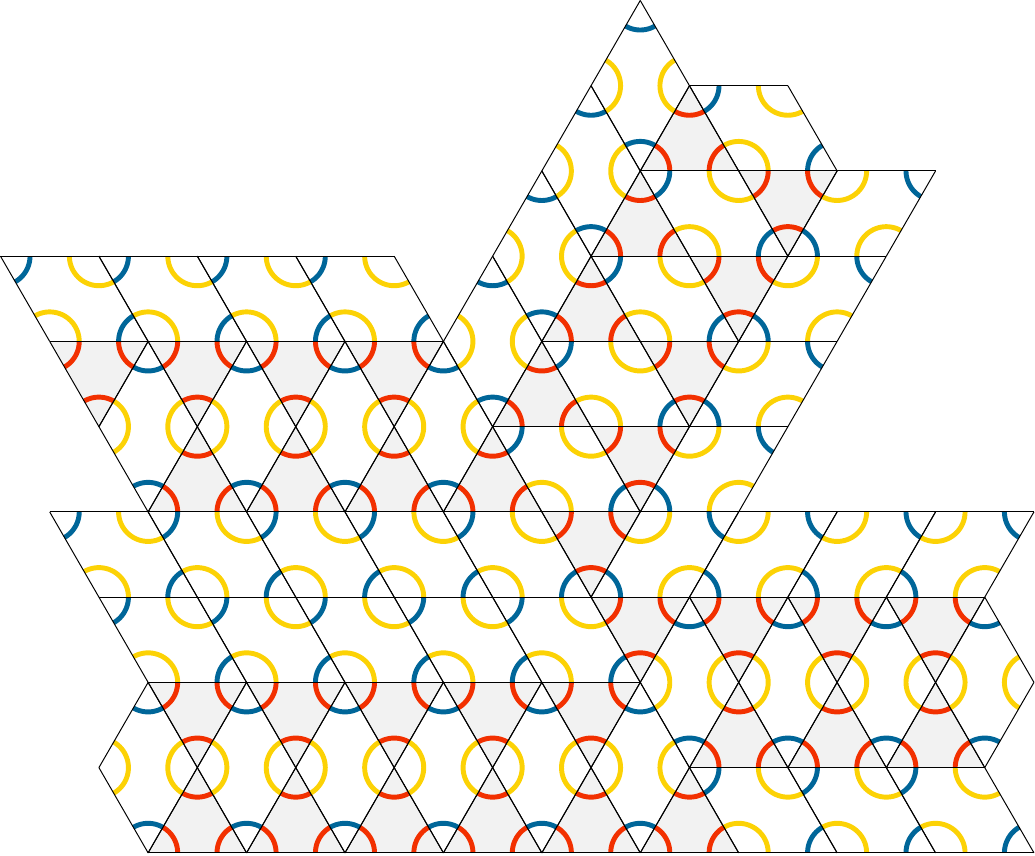}
\]
The lowermost 3-strip is the  3-strip. 

The second extension, which leads to a unique puzzle, is 
\[
\includegraphics[width=8cm]{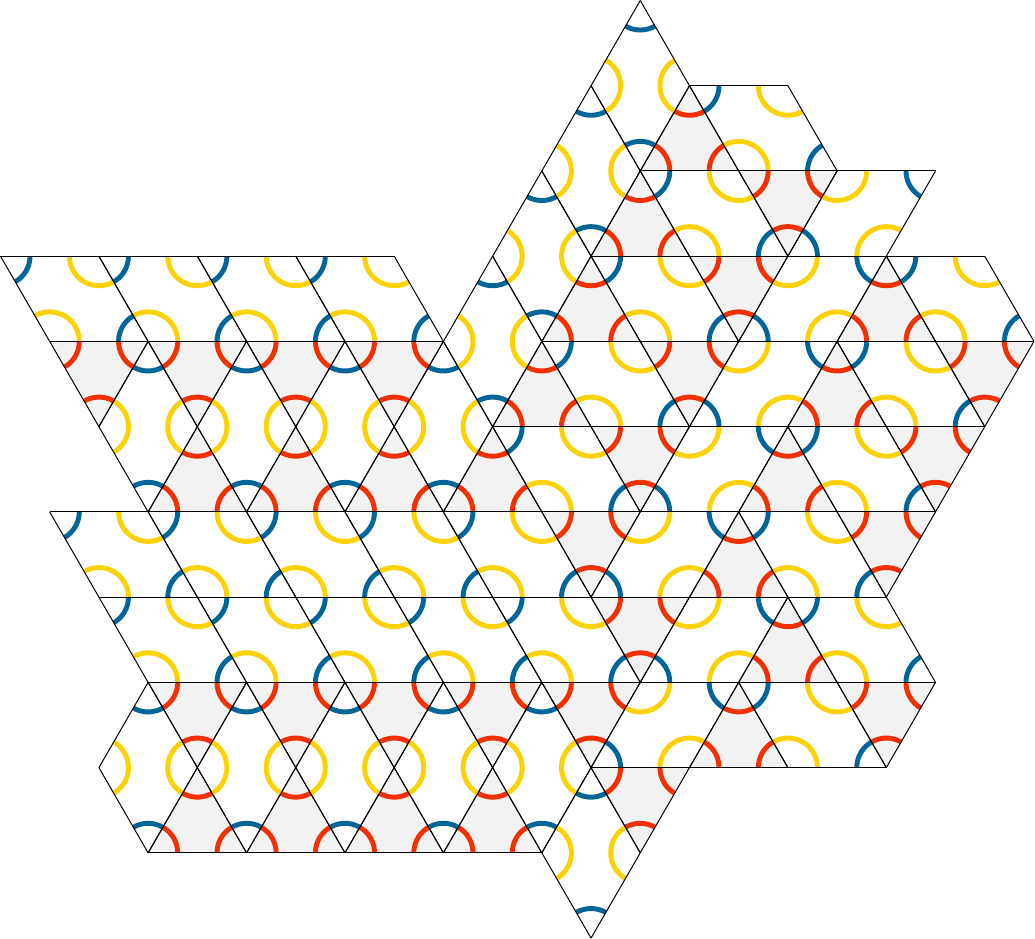}
\]
That it  can be extended into at most one puzzle follows by Lemma \ref{L - w-blocks} (and Lemma \ref{L -NPC}), and it is easy to show that this puzzle (the star puzzle of type $2\times \infty$) exists by induction.
\end{proof}

In particular, it appears that the semi-infinite $\diamond$-strip of type $2\times \infty$  embeds twice (at most) in an $\Aut(F_2)$ puzzle.

\begin{lemma}\label{ L - component 1xinfty}
If $P$ contains a semi-infinite $\diamond$-strip of type $1\times \infty$ then  one of the following holds:
\begin{itemize}
\item $P$ is the puzzle of type $2\times n$ where $n\geq 3$
\item $P$ contains the half-puzzle of type $2\times \infty$
\item $P$ is the star puzzle of type $2\times \infty$
\item $P$ contains the double $w$-strip (see Lemma \ref{L - 2x2})
\item $P$ contains the half-puzzle of type $1\times n$ (defined below) whose boundary strip of type $1\times n$ for some $n\geq 3$
\item $P$ is the \emph{$V$-puzzle}:
\[
\includegraphics[width=10cm]{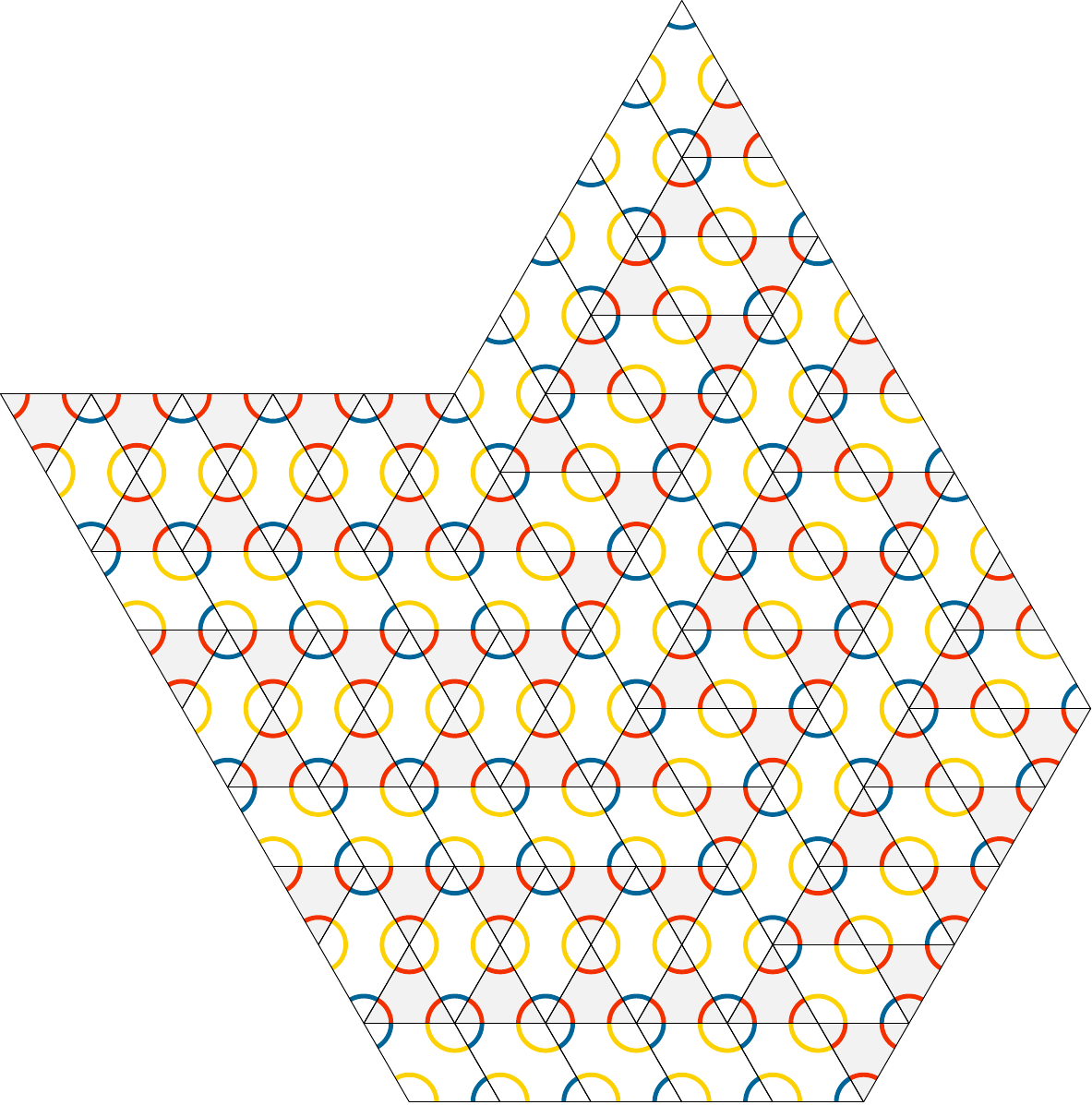}
\]
\end{itemize} 
\end{lemma}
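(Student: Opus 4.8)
The plan is to run the same "forward-analytic" machinery used in Lemmas \ref{L - component 1xn} and \ref{L - 2 x infty puzzle}, but starting from a semi-infinite $\diamond$-strip $T$ of type $1\times\infty$ rather than a finite component. First I would fix such a strip $T$ inside $P$ and look at its two lateral boundary $2$-strips together with the adjacent triangles; by the $\theta_0$-extension property (Lemma \ref{L -NPC}) the $1$-neighbourhood of $T$ on each side is forced, exactly as at the start of the proof of Lemma \ref{L - component 1xn}. The key point is to analyze what happens at the \emph{apex end} of $T$, i.e. near the vertex where the semi-infinite strip "closes up": there the inner angles along $\del(\diamond\text{-component})$ are constrained by Lemma \ref{L - components are losenges}, and one reads off a short finite list of local completions. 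Each completion either creates a $w$-block (hence, by Lemma \ref{L - w-blocks}, a full $w$-strip) or forces a lozenge adjacent to $T$, thereby enlarging the $\diamond$-component containing $T$.

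Next I would case-split on the first $\diamond$-component $C$ encountered when marching away from $T$ in the direction transverse to its lozenges. By the already-established constraints (the analogue of the "$m\times n$ with $m\le 2$" dichotomy, Lemmas \ref{L - n equals 1 or 2} and the height bound preceding Lemma \ref{L - 2 x infty puzzle}), $C$ is itself of type $1\times\infty$ or $2\times\infty$, or $P$ degenerates. If a $2\times\infty$ component appears, we are in the situation of Lemma \ref{L - 2 x infty puzzle}: $P$ contains the half-puzzle of type $2\times\infty$ or is the star puzzle of type $2\times\infty$ (and the $2\times n$, $n\ge 3$, puzzle arises as the finite analogue when the lozenge column has finite height — this is where the first bullet comes from, via Lemma \ref{L - 2xn puzzle}). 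If instead every transverse component stays $1\times\infty$, the strip $T$ propagates and one either builds a bi-infinite half-puzzle of type $1\times n$ (defined via the same induction as in Lemma \ref{L - component 1xn}, now with $n=\infty$ along the strip), or the two forced $w$-strips emanating from the apex bound an acute sector and close off into the $V$-puzzle. The double $w$-strip case (fourth bullet) is exactly the coincidence $n\to 2$, i.e. when the transverse component has height $2$ and then resolves as in Lemma \ref{L - 2x2}.

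Concretely the steps are: (1) force the $1$-neighbourhood of $T$ using Lemma \ref{L -NPC}; (2) enumerate the completions at the apex of $T$ using Lemma \ref{L - components are losenges} and the no-$5\pi/3$, no-$4\pi/3$-at-boundary analysis; (3) for each completion identify the first transverse $\diamond$-component $C$ and apply the height bounds to get $C$ of type $1\times\infty$ or $2\times\infty$; (4) invoke Lemma \ref{L - 2 x infty puzzle} in the $2\times\infty$ subcase to produce the half-puzzle of type $2\times\infty$ or the star puzzle; (5) in the $1\times\infty$ subcase, iterate, using that $w$-strips are forward-analytic (Lemma \ref{L - w-blocks}) and cannot cross transversally, to land on either the half-puzzle of type $1\times n$, the double $w$-strip, or the $V$-puzzle; (6) check that each listed object genuinely occurs by the usual inductive construction. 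The main obstacle I anticipate is step (5): controlling the long-range interaction of the two $w$-strips issuing from the apex — showing they are forced to bound a sector of the right angle and that no further lozenge can be inserted to spoil the $V$-configuration — which, as in Lemma \ref{L - component 1xn}, requires a careful but finite inspection of a bounded neighbourhood of the apex together with the non-transversality of $w$-strips.
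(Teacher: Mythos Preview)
Your overall strategy---force the neighbourhood of the strip using Lemma~\ref{L -NPC}, then case-split at the finite end---is the paper's strategy. But your case split in step~(3) is mis-framed, and this propagates through the rest of the outline.

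The paper does \emph{not} case-split on whether a ``first transverse $\diamond$-component $C$'' is of type $1\times\infty$ or $2\times\infty$. Instead, having built the quarter puzzle $P_\uparrow$ above $S$ (the analogue of Lemma~\ref{L - component 1xn}) and its forced lower extension, it looks at a single concrete piece position $X$ on the boundary and asks: triangle or lozenge? If $X$ is a triangle, a \emph{diagonal} $\diamond$-component of type $2\times n$ is created heading south-east, and one sub-cases on $n\in\{1,2,3,\ldots,\infty\}$: $n=1$ gives the $3$-strip half-plane, $n=2$ gives the double $w$-strip (via Lemma~\ref{L - 2x2}), $3\le n<\infty$ gives the $2\times n$ puzzle (Lemma~\ref{L - 2xn puzzle}), and $n=\infty$ invokes Lemma~\ref{L - 2 x infty puzzle}. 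If $X$ is a lozenge, one advances to the next position $Y$ and repeats; terminating after $n$ steps yields the half-puzzle of type $1\times n$, while never terminating yields the $V$-puzzle.

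So two things in your outline are off. First, the relevant component at the apex is diagonal, not transverse-parallel to $T$, and it is \emph{not} forced to be semi-infinite: it is $2\times n$ with $n$ arbitrary, and the finite values of $n$ are exactly what produce the first and fourth bullets. Your appeal to the height bound before Lemma~\ref{L - 2 x infty puzzle} does not force $C$ to be $1\times\infty$ or $2\times\infty$. Second, you place the double $w$-strip under your ``$1\times\infty$, iterate'' branch, but it actually arises from the triangle-at-$X$, $n=2$ sub-case; your own later remark (``the coincidence $n\to 2$'') is inconsistent with where you filed it. Reorganizing the split as triangle/lozenge at a fixed position, rather than by the type of an adjacent component, resolves both issues and is what the paper does.
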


\begin{proof} Take a semi-infinite (horizontal) $\diamond$-strip $S$ of height 1.
Again  as in the proof of Lemma \ref{L - component 1xn} one can construct a quarter puzzle $P_\uparrow$ extending $S$ ``in the upper direction''   which contains parallel (semi-infinite) 2-strips and (by assumption) semi-infinite $\diamond$-strip $S$ of height 1.

The lower part of $P_\uparrow$ admits a unique extension as follows:
\[
\includegraphics[width=5cm]{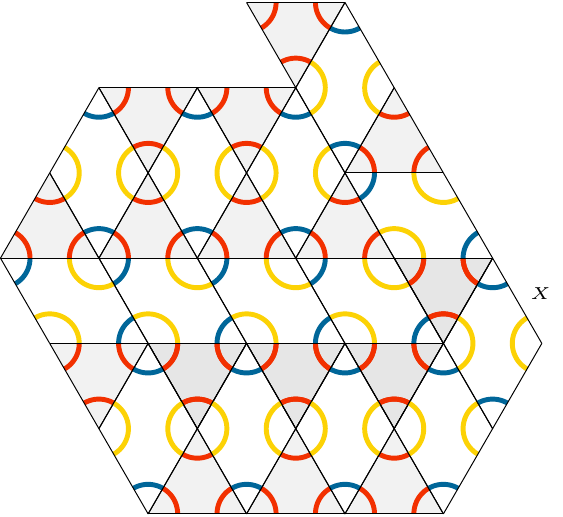}
\]
(In the picture the semi-infinite strips are horizontal.)

If the piece at $X$ is a triangle, then we have a $\diamond$-component of type $2\times n$ below it heading south-east. There are 4 cases: 
\begin{itemize}
\item If $n=1$ then $P_\uparrow$ extends uniquely east using a 3-strip. The resulting half-puzzle $P_{\uparrow,\to}$ piles up horizontal 3-strips.
\item If $n=2$ then by Lemma \ref{L - 2x2} the quarter puzzle $P_\uparrow$ extends uniquely east using the 4-strip (compare Lemma \ref{L - extension 4-strip}).
\item If $3\leq n<\infty$ then  $P$ is the puzzle of type $2\times n$ (see Lemma \ref{L - 2xn puzzle})
\item Otherwise,  $n=\infty$, and Lemma \ref{L - 2 x infty puzzle} shows that either $P$ is the star  of type $2\times \infty$ or contains the half-puzzle of type $2\times \infty$.  
\end{itemize}

We now assume that the piece at $X$ is a square and extend the portion of the puzzle uniquely as follows:
\[
\includegraphics[width=5.5cm]{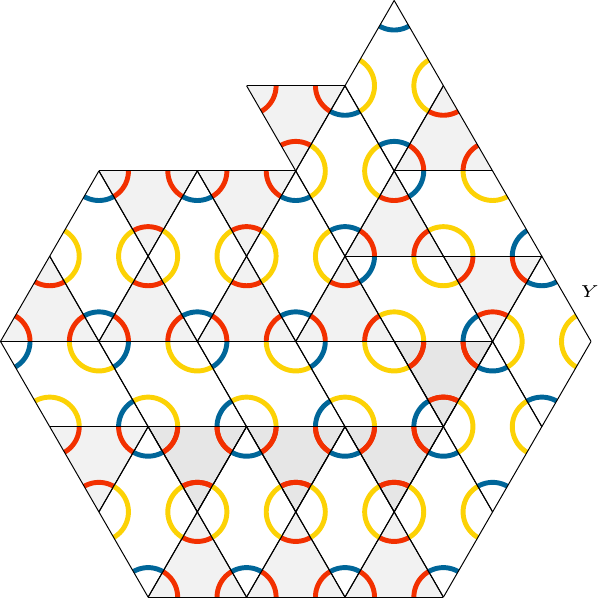}
\]
If the piece at $Y$ is a triangle, then by Lemma \ref{L - n equals 1 or 2} the puzzle contains a $\diamond$-component of type $m\times 3$ with $m=1,2$, 
and we can apply Lemma \ref{L - component 1xn} to find either that $P$ is the puzzle of type $2\times 3$, or if $n=1$ that $P$ contains the half-puzzle of type $1\times 3$.
\[
\includegraphics[width=10cm]{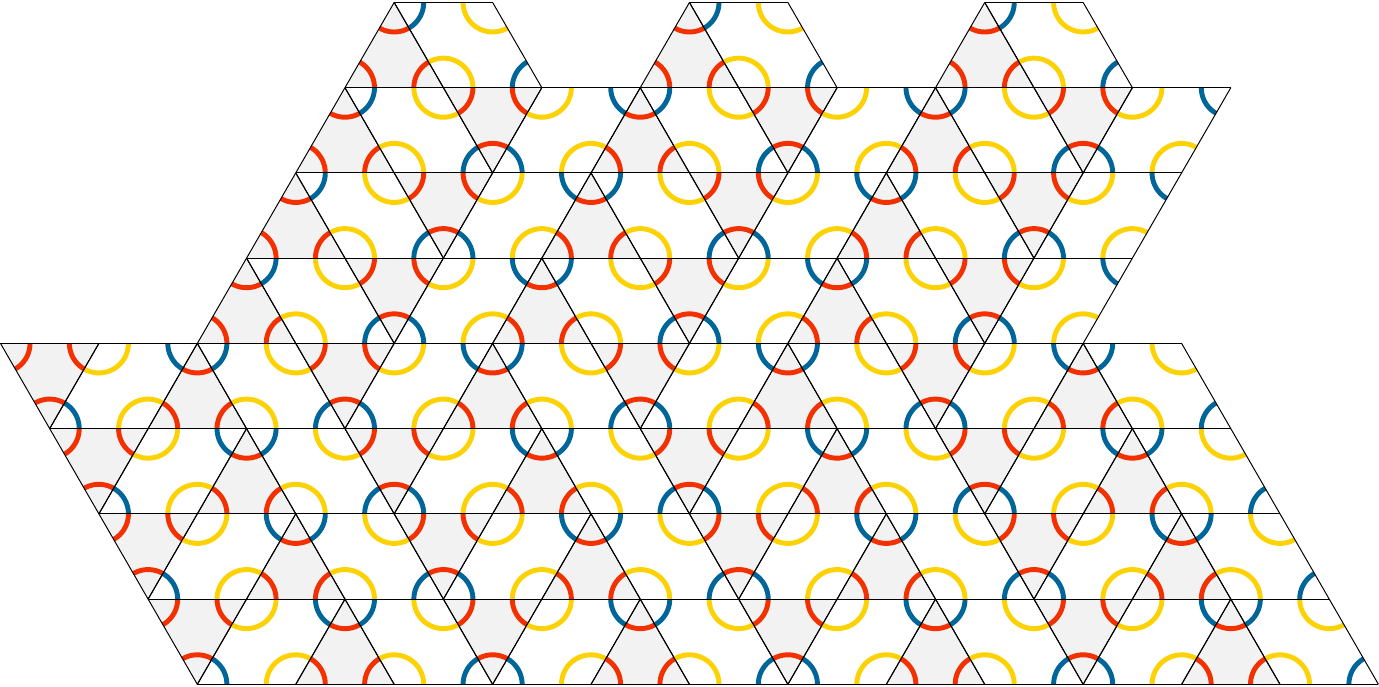}
\]
\begin{center}
The puzzle of type $1\times n$ (for $n=4$)
\end{center}

The conclusion is identical for every $n=3,4,\ldots$ If $n=\infty$ we obtain a sector in $P$ which is part of the $V$-puzzle (and corresponds to the limit $n\to\infty$). Since the horizontal strips alternate 2-strips and $\diamond$-strips of height 1, this sector in $P$ extends uniquely to the $V$-puzzle. 
 \end{proof}

Let us now turn to the $\diamond$-sectors, both of which are limiting cases of half $\diamond$-strips as the height increases. Recall however that $\diamond$-strips in flat can only have height 1 or 2.

\begin{lemma}\label{L - acute}
If the puzzle contains a sector with acute angle, then it contains a component of type $2\times n$ for some $n= 1,\ldots, \infty$ (if $n=\infty$ this is the semi-infinite $\diamond$-strip of type 2). 
\end{lemma}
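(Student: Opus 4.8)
The plan is to work locally in a neighbourhood of the tip of the acute sector and to exhibit a $\diamond$-component of type $2\times n$ directly, exactly as in the proofs of Lemmas \ref{L - n equals 1 or 2} and \ref{L - component 1xn}. First I would set up the picture: an acute $\diamond$-sector has its two sides bounded by $2$-strips, and at the tip the inner $\diamond$-angle is $\pi/3$ (Lemma \ref{L - components are losenges} together with acuteness). The two $2$-strips bounding the sector emanate from the tip. The key point is that along each bounding $2$-strip the third ring must be used at the boundary vertices (as in the proof of Lemma \ref{L - n equals 1 or 2}), which forces the triangles adjacent to the sector on the outside and produces $w$-blocks whose forward direction runs \emph{into} the sector region near the tip.

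Next I would use Lemma \ref{L - w-blocks}: each such $w$-block extends to a unique $w$-strip. Near the acute tip the two families of $w$-strips coming from the two bounding $2$-strips meet, and — since $w$-strips cannot intersect transversally in the forward direction — they must align into a coherent block of lozenges. Tracking the unique extension dictated by Lemma \ref{L -NPC} (the $\theta_0$-extension property with $\theta_0 = \pi$), the lozenges forced in this way near the tip assemble into a parallelogram; by Lemma \ref{L - n equals 1 or 2} its short side has length $1$ or $2$, and the $1\times n$ alternative is excluded because an acute $\diamond$-sector of width $1$ would, by Lemma \ref{L - component 1xn}, already be a bi-infinite strip situation rather than a sector (alternatively, the $w$-block geometry at the tip forces width exactly $2$). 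Hence a $\diamond$-component of type $2\times n$ appears, where $n$ is the ``depth'' to which this forced block extends; $n$ ranges over $1, 2, \ldots, \infty$, with $n=\infty$ precisely the case where the forced block is a semi-infinite $\diamond$-strip of height $2$.

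The main obstacle I anticipate is the bookkeeping at the tip: one must verify that the two bounding $2$-strips, together with the forced triangles and $w$-blocks, are genuinely compatible and leave no room for an alternative local completion — in other words, that the local configuration at an acute $\diamond$-tip is rigid. This is the same kind of finite case-check carried out in Lemma \ref{L - n equals 1 or 2} (solved in a $3$-neighbourhood), and I would handle it the same way, by drawing the $3$-neighbourhood of the tip and checking that every completion either produces the claimed $2\times n$ component or runs into a local contradiction with the prescribed sets of Shapes and Rings. Once rigidity at the tip is established, the propagation of the $2\times n$ block and the determination of the possible values of $n$ follow formally from Lemmas \ref{L - w-blocks}, \ref{L -NPC}, and \ref{L - n equals 1 or 2}.
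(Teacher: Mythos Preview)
Your overall strategy coincides with the paper's: analyse the puzzle in a neighbourhood of the acute tip, use the $\pi$-extension property (Lemma \ref{L -NPC}) to force a unique local completion, and read off a $2\times n$ $\diamond$-component from that completion. The paper's proof is in fact nothing more than this: it displays the unique finite extension of the acute sector (one figure) and observes that a $2\times n$ component sits at the bottom-left of that figure.

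Where your write-up drifts from the paper is in the mechanism you propose. The $w$-strip narrative is both more elaborate than necessary and partly misdirected. A $w$-strip contains triangles, so two $w$-strips cannot ``align into a coherent block of lozenges''; and the forward direction of the $w$-blocks you describe could not run \emph{into} the sector, since the sector is entirely lozenges. Likewise, your exclusion of a $1\times n$ alternative via Lemma \ref{L - component 1xn} is not the right lever: the $2\times n$ component in question is a \emph{new} $\diamond$-component appearing outside the sector in the forced extension, and its width is read off directly from the local picture, not deduced from strip lemmas applied to the sector itself. The non-transversality of $w$-strips is used elsewhere (e.g.\ in Lemma \ref{L - component 1xn}) to rule out configurations, not to manufacture lozenge blocks.

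In short, the step you flag as ``the main obstacle'' --- the finite case-check in a small neighbourhood of the tip --- \emph{is} the entire proof. Once you draw that neighbourhood and carry out the extension dictated by the ring constraints, the $2\times n$ component is visible without any further appeal to $w$-strips or to Lemmas \ref{L - n equals 1 or 2} and \ref{L - component 1xn}. Drop the $w$-strip scaffolding and just present the forced extension.
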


\begin{proof}
Acute sectors extend uniquely (up to symmetry along the bisector) to the following finite puzzle: 
\[
\includegraphics[width=7cm]{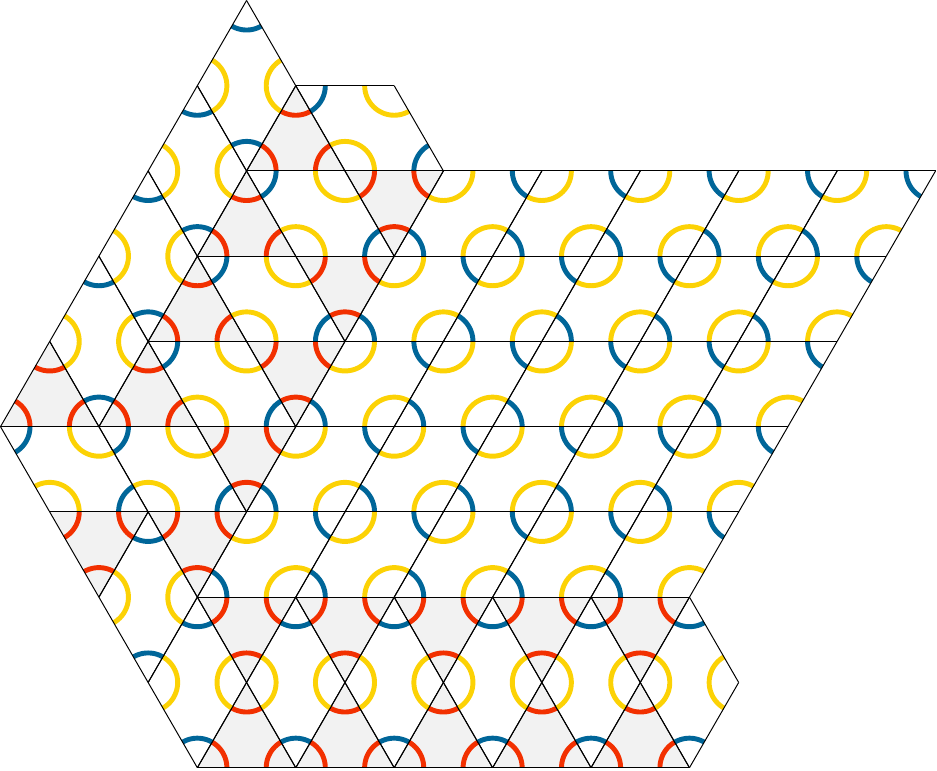}
\]
This puzzle  contains (at the bottom-left) a $\diamond$-component of type $2\times n$ with $n=1,\ldots,\infty$.  If $n=1$ then the sector extends uniquely to a half plane containing the 3-strip. If $n=2$ then it extends uniquely to a half-plane containing the 4-strip. Otherwise 

 By Lemma \ref{L- 2x1}, we have $n\geq 2$ and by Lemma \ref{L - 2x2} we have $n\geq 3$. Applying Lemma \ref{L - component 1xn} gives the conclusion. 
\end{proof}

\begin{lemma}\label{L - obtuse}
There exists a unique puzzle containing a sector with obtuse angle.
\end{lemma}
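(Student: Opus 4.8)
The plan is to follow the same strategy as in the proof of Lemma~\ref{L - acute}: start from an obtuse $\diamond$-sector $\Sigma\subset P$ and show that the $\theta_0$-extension property (Lemma~\ref{L -NPC}) together with the ring constraints forces $P$ completely, this time leaving no free parameter. Thus the proof will split into a rigidity part (at most one puzzle) and a construction part (existence).

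First I would fix the apex $v$ of $\Sigma$ and its two bounding rays $r_1,r_2$, meeting at $v$ with inner angle $2\pi/3$. Since $\Sigma$ is the full $\diamond$-component, no lozenge of $P\setminus\Sigma$ can share an edge with a lozenge of $\Sigma$ (otherwise both would lie in a common $\diamond$-gallery, hence in $\Sigma$). Therefore the pieces attached to $r_1$ and $r_2$ on their outer sides are all triangles, and each $r_i$ carries a semi-infinite $2$-strip $T_i\subset P\setminus\Sigma$ bounding $\Sigma$, exactly as noted after Lemma~\ref{L - infinite a b c cases} for $\diamond$-strips.

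Next I would carry out the (finite) analysis of the local picture around $v$. The inner angle of $\Sigma$ at $v$ has measure $2\pi/3$ — realized either by a single yellow arc or by two green arcs — and this, together with the triangles of $T_1$ and $T_2$ adjacent to $v$, pins down by a short case check on the prescribed set of Rings which ring occurs at $v$ and hence which pieces fill the remaining angle $4\pi/3$; one obtains a uniquely determined bounded neighbourhood $N$ of $v$. From $N$ and the $2$-strips $T_1,T_2$ I would then propagate outward: along each $T_i$ the outer side exhibits $w$-blocks, which by Lemma~\ref{L - w-blocks} extend to unique $w$-strips, and these $w$-strips edge acute sectors together with the $2$-strips exactly as in the proofs of Lemmas~\ref{L - 2xn puzzle} and~\ref{L - component 1xn}. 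An induction on the combinatorial distance from $v$ then shows that every further piece is forced, so $P$ is contained in at most one puzzle; finally I would exhibit this puzzle explicitly and check by induction that the local rules hold everywhere, which gives existence.

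The main obstacle I anticipate is the apex analysis together with the bookkeeping needed to see that the two propagations — from $T_1$ and from $T_2$ — are mutually consistent and genuinely branch-free, i.e.\ that, unlike in the acute case, no parameter $n$ ever appears. Concretely one must rule out that the $\diamond$-components adjacent to $T_1$ or $T_2$ grow: the rigidity should come from the fact that the obtuse apex forces $w$-strips emanating from near $v$ on \emph{both} sides simultaneously, so that the "acute sector" argument (as in Lemma~\ref{L - acute}) pins the height of each parallel $\diamond$-component to its minimal value rather than letting it range over $1,\dots,\infty$. Making this simultaneity precise — and checking that the forced configuration near $v$ is genuinely the same whichever bounding ray one starts the propagation from — is where the bulk of the careful (but elementary) verification will lie.
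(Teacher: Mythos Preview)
Your proposal is correct and follows essentially the same strategy as the paper: the obtuse sector forces a $w$-strip near the apex, and then Lemma~\ref{L -NPC} plus induction yields both uniqueness and existence. The paper's proof is far terser than yours---it simply displays a figure showing the forced extension containing a $w$-strip and then invokes ``induction using Lemma~\ref{L -NPC} as earlier''---so your more explicit outline (apex analysis, bounding $2$-strips, bilateral propagation via $w$-blocks) fills in exactly the details the paper leaves implicit.
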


\begin{proof}
Obtuse sectors extend to: 
\[
\includegraphics[width=7cm]{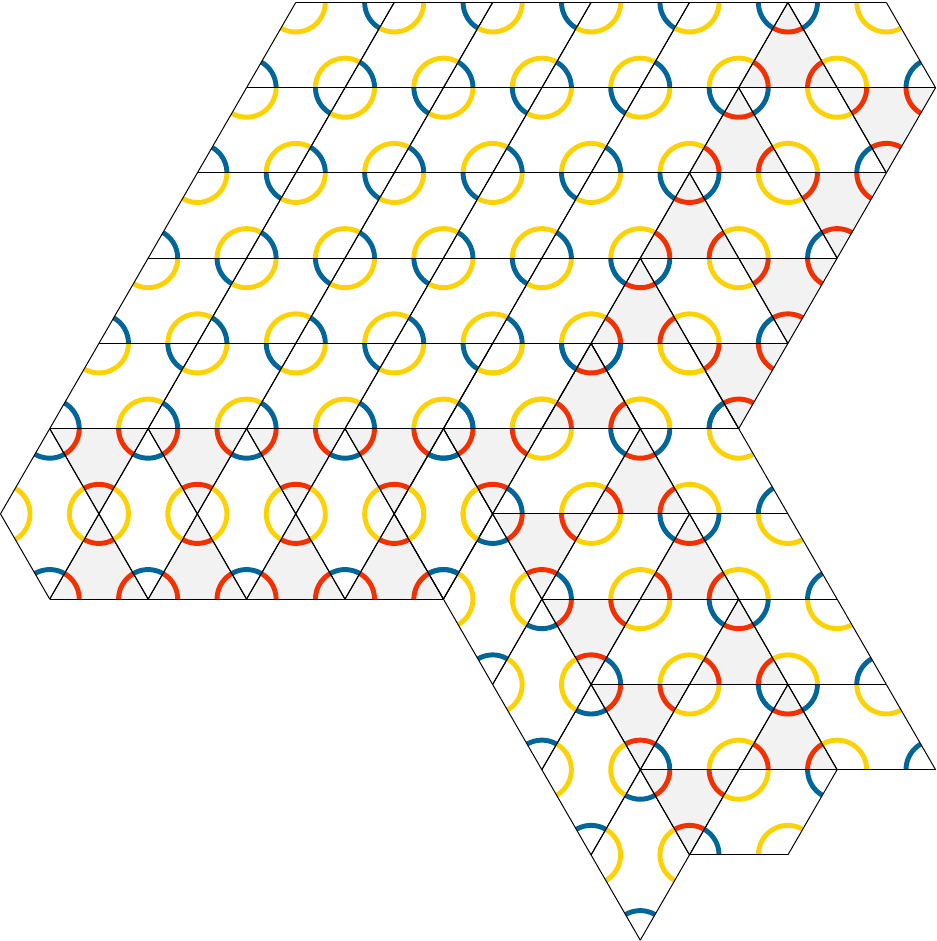}
\]
which creates a $w$-strip. Existence and uniqueness of the puzzle is proved by induction using Lemma \ref{L -NPC}   as earlier. 
\end{proof}

Let us then classify extensions of the 3-strips and the double $w$-strips. 
 
\begin{lemma}\label{L -  3-strip}
If $P$ contains the 3-strip $S$, then $P$ contains only parallels 3-strips and at most two copies of the half puzzle of type $2\times \infty$.
\end{lemma}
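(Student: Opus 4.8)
The plan is to analyze the two boundary lines of the 3-strip $S$ separately and show that the only possible extensions on each side are either another parallel 3-strip or a copy of the half puzzle of type $2\times\infty$. First I would recall from the proof of Lemma \ref{L - 2 x infty puzzle} that the 3-strip arises precisely as the boundary of the half puzzle of type $2\times\infty$, so that one admissible extension across a given boundary line is to glue on that half puzzle. The content of the lemma is that, apart from this, the only way to continue is to repeat the 3-strip pattern. To see this, look at the $w$-blocks visible along a boundary line of $S$: the 3-strip contains $w$-blocks whose forward direction is transverse to the strip, and by Lemma \ref{L - w-blocks} each extends uniquely to a $w$-strip. Since $w$-strips cannot intersect transversally in the forward direction, this rigidly constrains which pieces can be placed immediately beyond the boundary.

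The key steps, in order, are: (1) identify the two types of vertices along a boundary line of the 3-strip and, using Lemma \ref{L -NPC} (the $\theta_0$-extension property at angles $>\pi$) together with Lemma \ref{L - components are losenges}, determine the forced local completion one layer beyond the boundary; (2) observe that at each boundary lozenge one faces exactly the binary choice of Lemma \ref{L - 2 x infty puzzle} — the orientation of the next lozenge relative to the adjacent triangles — and that the two choices lead respectively to the star puzzle completion (which does not occur here, since $S$ is a 3-strip and not the boundary of the star puzzle) and to either a new parallel 3-strip or the half puzzle of type $2\times\infty$; (3) propagate this by induction along the strip: once the first transverse $w$-strip is fixed, Lemma \ref{L - w-blocks} forces the whole next layer, and the no-transverse-intersection property shows no incompatible choice can be made further along; (4) conclude that on each side of $S$ the puzzle consists of parallel 3-strips, terminating (on at most one side each) in the half puzzle of type $2\times\infty$, giving at most two such copies in total.

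The main obstacle I expect is step (2): carefully verifying that the half puzzle of type $2\times\infty$ is the \emph{only} non-periodic way to cap off a stack of parallel 3-strips, and in particular that no ``mixed'' configuration (a 3-strip followed by something other than a 3-strip or the half puzzle) survives in a full puzzle. This requires checking the 1- or 2-neighbourhood of the relevant boundary lozenges against the prescribed ring set to rule out the remaining local possibilities, much as in the proofs of Lemmas \ref{L - n equals 1 or 2} and \ref{L - component 1xn}. The inductive propagation (step 3) is then routine given Lemma \ref{L - w-blocks} and the transversality obstruction, and the counting in step (4) is immediate: a boundary line of the strip separates two half-planes, and each half-plane, being foliated by parallel 3-strips, can contain at most one copy of the half puzzle of type $2\times\infty$ as its ``last'' layer.
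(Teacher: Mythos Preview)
The paper does not actually supply a proof of this lemma: it states Lemma~\ref{L -  3-strip}, follows it with the definitions of the 3-strip puzzle, the half 3-strip puzzle, and the 3-strip puzzle of height $h$, and then moves directly to Lemma~\ref{L - extension 4-strip}. So there is no ``paper's own proof'' to compare against, and your plan has to be assessed on its own.

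Your overall architecture is correct and is exactly in the spirit of the proofs of Lemmas~\ref{L - 2xn puzzle}, \ref{L - component 1xn}, and \ref{L - 2 x infty puzzle}: analyse each boundary line of $S$, use Lemma~\ref{L -NPC} and the ring constraints to force the next layer up to a finite choice, identify the $w$-blocks pointing transversally out of $S$, and let Lemma~\ref{L - w-blocks} plus the no-transverse-intersection fact carry the induction. Steps (1), (3) and (4) are fine as written.

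The one place where your plan is imprecise is step~(2). You invoke ``the binary choice of Lemma~\ref{L - 2 x infty puzzle}'', but that dichotomy was set up for extending the double $w$-strip coming from a semi-infinite $2\times\infty$ $\diamond$-strip, not for extending the 3-strip itself; the geometric situation along $\partial S$ is not literally the same, and in particular the option ``star puzzle completion'' does not arise as one of the two local moves here (it is the \emph{other} extension of the double $w$-strip, before a 3-strip has appeared at all). What you actually need in step~(2) is a direct local analysis: list the ring types occurring along $\partial S$, observe that at each boundary vertex the angle already contributed by $S$ exceeds $\pi$ or forces a specific colour pattern, and conclude via Lemma~\ref{L -NPC} that the row immediately beyond $\partial S$ is either the bottom row of a new 3-strip or the first row of the half puzzle of type $2\times\infty$. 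This is the same kind of 1-neighbourhood check done in Lemmas~\ref{L - n equals 1 or 2} and \ref{L - component 1xn}, and you already flag it as the expected obstacle; just do it directly rather than by appeal to Lemma~\ref{L - 2 x infty puzzle}. Once that is nailed down, your propagation argument and the ``at most one cap per half-plane'' count go through without change.
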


 The \emph{3-strip puzzle} is the puzzle made only of parallels 3-strips.  Every  other puzzle is called the 3-strip puzzle of height $h$ where $h\geq 1$ is the number of parallels 3-strips it contains. We call \emph{half 3-strip puzzle} the puzzle containing exactly one half-puzzle of type $2\times \infty$. Every 3-strip puzzle of height $h<\infty$ contains two such puzzles. 

\begin{lemma}\label{L - extension 4-strip}
There are precisely two puzzles containing the double $w$-strip. 
\end{lemma}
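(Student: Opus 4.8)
The plan is to run the same boundary analysis used in the proof of Lemma~\ref{L - 2 x infty puzzle}, together with the forward rigidity of $w$-strips. Fix the double $w$-strip $S$ of Lemma~\ref{L - 2x2}; it is a bi-infinite strip of height $4$, bounded on either side by a periodic broken line. First I would walk along one boundary component of $S$: by Lemma~\ref{L -NPC}, at each boundary vertex of $\del S$ contributing an angle $>\pi$ the remaining pieces are forced, so this produces a canonical partial first layer outside $S$ on that side. Exactly as in Lemma~\ref{L - 2 x infty puzzle}, the only undetermined pieces turn out to be the lozenges sitting at one recurring type of vertex, each of which can a priori be oriented in two ways; thus the first layer on each side of $S$ is governed by a single binary parameter $\e\in\{+,-\}$.

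Next I would show that this parameter propagates and in fact pins down the whole half-puzzle on that side. Choosing one orientation at one such vertex produces a $w$-block; by Lemma~\ref{L - w-blocks} it extends to a $w$-strip, and since two $w$-strips cannot intersect transversally in the forward direction this forces the orientation at every other such vertex along the layer, and then (again by Lemma~\ref{L -NPC}) the rest of the layer. The usual induction then builds, for each value of $\e$, a unique half-puzzle extending $S$ on that side, and shows it exists. Moreover the $w$-strips created on one side run diagonally back across $S$ towards the opposite boundary, exactly as in the proof of Lemma~\ref{L - 2xn puzzle}; tracking them shows that the choices on the two sides are not independent, so the pair of half-puzzles glued along $S$ is controlled by at most one binary parameter overall, whence at most two puzzles contain $S$.

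Finally I would check that both possibilities are realized and are genuinely distinct --- they already differ in a bounded neighbourhood of $S$, in the relative orientation of the boundary lozenges on the two sides --- which yields precisely two puzzles. The step I expect to be the main obstacle is the bounded-region combinatorics underlying the second paragraph: one has to carry out, in a neighbourhood of $\del S$, the explicit case check (of exactly the type used in Lemmas~\ref{L -NPC}, \ref{L - w-blocks} and \ref{L - component 1xn}) that reduces all the freedom in extending $S$ to a single binary choice, and in particular rules out the ``mixed'' configuration by producing a local contradiction from two transversally intersecting $w$-strips, just as at the end of the proof of Lemma~\ref{L - component 1xn}. Everything after that reduction is the routine inductive existence argument already used repeatedly above.
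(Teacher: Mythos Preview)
Your plan uses the right ingredients---Lemma~\ref{L -NPC} at the boundary vertices and Lemma~\ref{L - w-blocks} to propagate---and in that sense it tracks the paper. The paper's argument, however, is much terser and turns on an observation you do not make: the double $w$-strip has a symmetry along a vertical axis, and using this the extension to a certain finite region (the figure in the proof) is forced outright; Lemma~\ref{L - w-blocks} then extends that region north in a unique way, which already produces the half-puzzle displayed after the proof. The paper never carries out two separate boundary analyses.

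By contrast, you posit a binary parameter on \emph{each} side of $S$ and then try to cut $2\times 2=4$ down to $2$ by arguing that the two parameters are coupled via diagonal $w$-strips ``running back across $S$''. This coupling is the step I would flag. In Lemma~\ref{L - 2xn puzzle}, which you cite as the model, the diagonal $w$-strips \emph{edge} an acute sector together with the horizontal strip---they meet it at an apex but do not traverse it. Here likewise the $w$-blocks created in the extension on one side of $S$ have their forward direction pointing \emph{away} from $S$, so there is no evident transversal $w$-strip available to link the two boundaries, and the paper uses no such linkage. If that coupling fails, your argument leaves you with four extensions rather than two. The vertical-axis symmetry is precisely what lets the paper avoid this difficulty.
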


\begin{proof}
Note that, using the symmetry along the vertical axis, the double $w$-strip extends uniquely as follows:
\[
\includegraphics[width=8cm]{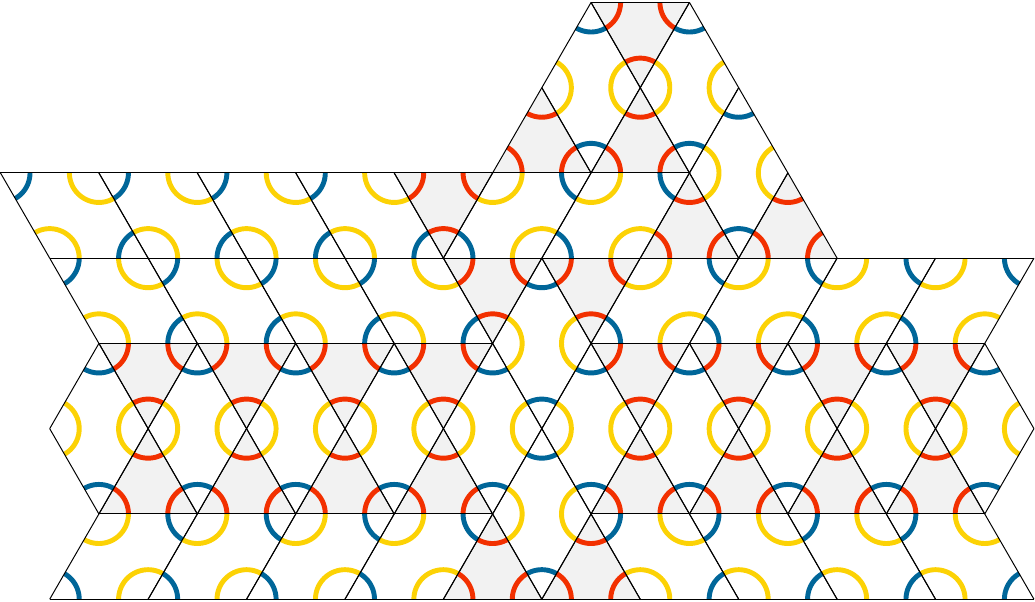}
\]
By Lemma \ref{L - w-blocks} this can be extended north in a unique way. 
\end{proof}
 
 The two puzzles in Lemma \ref{L - extension 4-strip} will be called the opposite acute sector puzzle and the adjacent acute sector puzzle.  The corresponding half puzzle is: 
\[
\includegraphics[width=11cm]{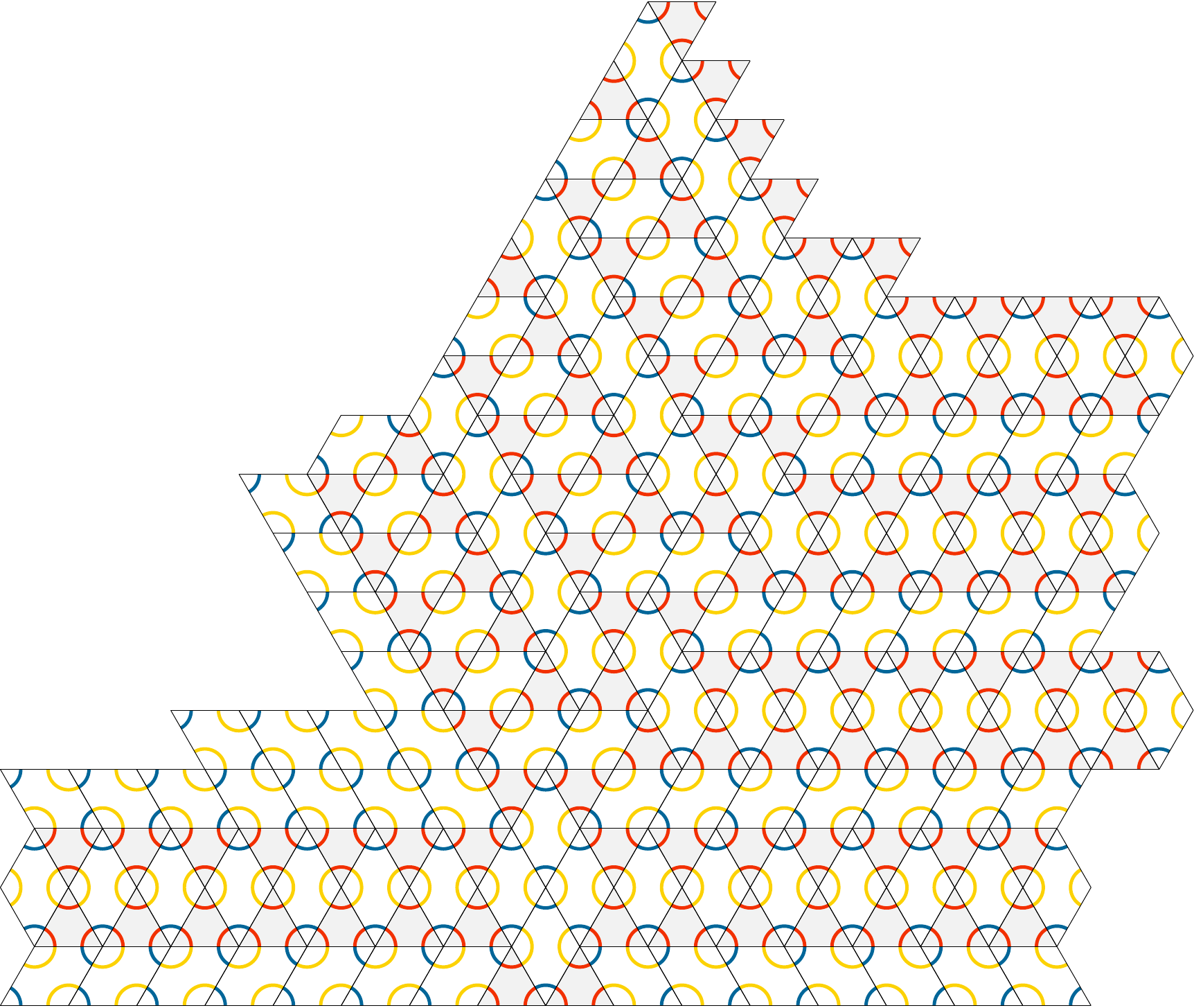}
\]

 \begin{theorem}\label{T - Puzzle classification} The $\Aut(F_2)$ puzzles come in 4 infinite series and  9 exceptional puzzles not belonging to the series. 
 
 The 9 exceptional puzzles are:
 \begin{enumerate}[a)]
 \item the $\diamond$-puzzle
 \item the $2\times 1$-puzzle (see Lemma \ref{L- 2x1})
\item the star puzzle of type $2\times \infty$  
 \item the 3-strip puzzle and the half 3-strip puzzle
  \item the opposite acute sector puzzle and the adjacent acute sector puzzle  (see Lemma \ref{L - 2x2} and Lemma \ref{L - extension 4-strip}).
 \item the obtuse sector puzzle (see Lemma \ref{L - obtuse})
 \item the $V$-puzzle
 \end{enumerate} 

The 4 infinite series are:
 \begin{enumerate}[A)]
 \item the alternating 2-strip/$\diamond$-strip series. This contains three subseries:
 \begin{itemize}
  \item the \emph{bounded 2-strip/$\diamond$-strip} series. They are parametrized by  biinfinite sequences $(\cdots,n_{-1},n_0,n_1,\cdots )$ recording the heights of the $\diamond$-strips in the given order, where $n_i\geq 1$ is arbitrary.  There are uncountably many such puzzles. 
  \item The \emph{sided 2-strip/$\diamond$-strip} series. The \emph{1-sided} series is parametrized by right infinite sequences $(n_0,n_1,\cdots )$ of heights of $\diamond$-strips, where $n_i\geq 1$ is arbitrary, giving uncountably many puzzles. The \emph{2-sided} series is parametrized by finite sequences
$(n_0,n_1,n_r)$ where $r$ is finite but arbitrary. This gives countably many puzzles. 
\end{itemize}

  \item the $1\times n$ series. They are constructing using the strip of type $1\times n$ (Lemma \ref{L - component 1xn}) and can be encoded by the biinfinite sequence  $(\cdots,n_{-1},n_0,n_1,\cdots )$
of heights here $n_i\geq 1$ is arbitrary, and at least one $n_i\geq 3$.  There are uncountably many such puzzles. This series also splits into 3 subseries,  the bounded $1\times n$ series and the sided $1\times \infty$ series, which can either be one-sided  or two-sided. 

 \item the $2\times n$ series, where $n=3,4, \ldots (n <\infty)$.  There is a unique puzzle for every $n\times 2$ (Lemma \ref{L - 2xn puzzle}).
 
 \item the 3-strip series of height $h\geq 1$ (Lemma \ref{L -  3-strip}).
 \end{enumerate}
\end{theorem}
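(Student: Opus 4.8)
The plan is to assemble the classification from the case analysis already carried out in Lemmas \ref{L - components are losenges}--\ref{L - extension 4-strip}, organized according to the structure of the $\diamond$-components of a given puzzle $P$. First I would record the trichotomy: either (i) $P$ has an infinite $\diamond$-component, (ii) all $\diamond$-components of $P$ are finite but at least one has type $1\times n$ or $2\times n$ with $n\geq 3$, or (iii) all $\diamond$-components are finite of type $1\times n$ with $n\leq 2$ (note Lemma \ref{L - n equals 1 or 2} forbids finite components with $\min(m,n)\geq 3$, and Lemma \ref{L - component 1xn}a) lets us absorb the stray $2\times n$ into case (ii) or into the $2\times n$-puzzle). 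Case (iii) is disposed of immediately by Lemma \ref{L- 2x1}: $P$ is the $2\times 1$-puzzle, exceptional item b).

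For case (ii), suppose $P$ has a finite $\diamond$-component of type $1\times n$ or $2\times n$ with $n\geq 3$. Lemma \ref{L - 2xn puzzle} handles the $2\times n$ case: $P$ is the $2\times n$-puzzle, series C). If the component has type $1\times n$, Lemma \ref{L - component 1xn} places $C$ in a bi-infinite strip $S$ of type $1\times n$ and forces every parallel $\diamond$-component of $S$ to have type $1\times n$ or (at most once) $2\times n$; in the latter case we are back to the $2\times n$-puzzle, in the former $S$ is the bi-infinite strip of type $1\times n$. Then I would argue, exactly as in the proof of Lemma \ref{L - component 1xn} (iterating the unique upward/downward extension via Lemma \ref{L - w-blocks} and Lemma \ref{L -NPC}), that the heights of successive parallel $1\times n$-strips are free parameters $n_i\geq 1$, with the constraint that at least one $n_i\geq 3$ (otherwise we are in case (iii)), and that the strip can terminate on either side with a half-puzzle of type $1\times n$ or not at all. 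This produces precisely series B) with its bounded and one-/two-sided subseries; the bounded case gives uncountably many puzzles by a diagonal argument on the sequence $(n_i)$.

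For case (i), I would run through Lemma \ref{L - infinite a b c cases}: the infinite $\diamond$-component is the $\diamond$-plane (item a), the $\diamond$-puzzle, exceptional a)); a half $\diamond$-plane or bi-infinite $\diamond$-strip, which by the remark following Lemma \ref{L - infinite a b c cases} forces an alternating $2$-strip/$\diamond$-strip structure, and then (using that semi-infinite $\diamond$-strips have height $1$ or $2$, and Lemma \ref{L - 2xn puzzle}/\ref{L - 2x2} to forbid wider interior $\diamond$-strips) yields series A) with the bounded, one-sided and two-sided subseries parametrized by the height sequence $(n_i)$; a semi-infinite $\diamond$-strip of type $2\times\infty$, giving by Lemma \ref{L - 2 x infty puzzle} the star puzzle of type $2\times\infty$ (item c)) or the half puzzle of type $2\times\infty$, whose extensions are classified by Lemma \ref{L - 3-strip} into the $3$-strip series D) of height $h\geq1$ and the half $3$-strip puzzle (item d)); a semi-infinite $\diamond$-strip of type $1\times\infty$, covered by Lemma \ref{L - component 1xinfty}, whose six alternatives feed back into already-treated cases except for the $V$-puzzle (item g)); an acute $\diamond$-sector, which by Lemma \ref{L - acute} always forces a $2\times n$-component and hence reduces to earlier cases \emph{or} to the double $w$-strip, whose two extensions are the opposite and adjacent acute sector puzzles (item e), via Lemma \ref{L - extension 4-strip}); and an obtuse $\diamond$-sector, giving by Lemma \ref{L - obtuse} the unique obtuse sector puzzle (item f)).

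The main obstacle I anticipate is not any single lemma but the bookkeeping needed to show the list is both \emph{exhaustive} and \emph{non-redundant}: one must check that the reductions among cases (i)--(iii) actually terminate — e.g. that invoking Lemma \ref{L - component 1xinfty} or Lemma \ref{L - acute} genuinely lands in a strictly ``earlier'' case and does not loop — and that distinct parameter values in series A), B), C), D) give non-isometric puzzles while the nine exceptional puzzles are pairwise distinct and distinct from all members of the series. I would handle this by fixing a linear order on the case labels reflecting the reduction direction, verifying each lemma's output respects it, and then distinguishing puzzles by invariants: the isometry type of the $\diamond$-component poset, the presence/number of $2$-strips versus $w$-strips, and for the bounded subseries the bi-infinite height sequence up to shift and reflection.
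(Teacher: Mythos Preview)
Your proposal is correct and follows essentially the same approach as the paper's own proof: both organize the argument as a case analysis on the $\diamond$-components of $P$, driven by Lemma~\ref{L - infinite a b c cases} for the infinite components and Lemmas~\ref{L - n equals 1 or 2}--\ref{L - extension 4-strip} for the finite ones, with the paper merely phrasing it as ``assume $P$ is not exceptional and show it lies in a series'' rather than your explicit trichotomy. One small bookkeeping point to patch: your trichotomy (i)--(iii) as stated omits the configuration where all $\diamond$-components are finite, none has $n\geq 3$, but one is of type $2\times 2$; this case is in fact vacuous (Lemmas~\ref{L - 2x2} and~\ref{L - extension 4-strip} force $P$ to be one of the two acute sector puzzles, which contain an infinite acute $\diamond$-sector and so fall under (i)), but you should say this explicitly when justifying exhaustiveness, since your parenthetical about Lemma~\ref{L - component 1xn}a) does not cover it.
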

 
Several of the exceptional puzzles appear as limits of elements of the infinite series. For example, a), \ c), and \ d) can be seen as limiting cases of A), \ C), and D) respectively, and b) can be seen as a limit of $B$ when ``$n\to 2$''. The $V$-puzzle g) can also be seen as a limiting case for B), and D) can be seen as a limiting case for C) when $n\to \infty$. The obtuse sector puzzle, on the other hand, does not appear to be a limit --- and similarly for e). 

\begin{proof}
Let $P$ be an $\Aut(F_2)$ puzzle. We assume that $P$ is not one of the exceptional puzzles and prove that it belongs to one of the series. 

Note that $P$ does not contain an obtuse sector. If it contains an acute sector then it contains a strip of type $2\times n$. If it contains a semi-infinite strip of type $2\times \infty$ and since it is not the star puzzle of type $2\times \infty$ it contains the 3-strip. Not being in d) it belongs to D).

If $P$ contains a semi-infinite strip of type $1\times \infty$ but no semi-infinite strip of type $2\times \infty$, then it belongs to B) by Lemma \ref{ L - component 1xinfty}.

If $P$ contains a bi-infinite $\diamond$-strip then it belongs to $A$.

By Lemma \ref{L - infinite a b c cases} we may now assume that every $\diamond$-component of $P$ is finite. If has a component of type $2\times n$ then we are in case C). Otherwise every component is finite of type $1\times n$. Then this is case B).
\end{proof}

\begin{remark}\label{R - marked puzzles}
One can organize puzzles into a ``space of puzzles'' as follows. A marked puzzle is a parametrization  $f\colon \IR^2\to P$ where $f$ is isometric. Define a valuation $v$ on the set $\MP'$ of all marked puzzles (with prescribed shapes and rings) by
\[
v(f,f')=\sup\{r,\ f(B(0,r))\stackrel{\text{simplicial isometric}}\simeq f'(B(0,r))\}
\]
with corresponding metric
\[
d(f,f')=e^{-v(f,f')}.
\]
The space of marked puzzles is 
\[
\MP:=\MP'/\{d=0\}.
\]
Some of the statements regarding convergence of puzzles can be interpreted in the space $\MP$. For example the obtuse sector puzzle ``is not a limit'' because it corresponds to an isolated point in $\MP$. Note that the space of puzzles is  compact if the sets of shapes and rings are finite. It is endowed with an action of $\IR^2$ given by 
\[
t\cdot f:= f\circ t^{-1}
\]
for $t$ a translation of $\IR^2$ turning $\MP$ into a lamination whose leaves correspond to puzzles with the given set of shapes and rings. 
\end{remark}

\section{Ring complexes and flat plane problems}\label{S - Nonplanar}
  
Ring puzzles, or portions thereof, can be  ``glued together'' to form ``nonplanar ring puzzles'', which are more complicated objects to understand.

Let us start with a 2-complex $X$ and say that $X$ is a \emph{ring complex}  if its faces are flat polygons in the Euclidean plane with coloured angles.  
  
  The \emph{rings} at a vertex of $X$ is set of all coloured circles of length $2\pi$ of small radius around that vertex. 

The \emph{type} of a ring complex is the prescribed set of coloured faces and rings (considered up to coloured isometry). 

We study  ring complexes of type $\Aut(F_2)$ as considered earlier:
  \[
T:=\left \{\includegraphics[width=2.7cm,trim = 0cm .35cm 0cm 0cm]{colored_shape_set.pdf}\ \     
 \ \ \ \ \   \includegraphics[width=.9cm,trim = 0cm .9cm 0cm 0cm]{ring_1.pdf}\ \  \includegraphics[width=.9cm,trim = 0cm .9cm 0cm 0cm]{ring_2.pdf}\ \ \includegraphics[width=.9cm,trim = 0cm .9cm 0cm 0cm]{ring_3.pdf}\right\}
\]
\vspace{.1cm}

\begin{lemma}
The group $\Aut(F_2)$ acts properly on a ring complex of type $\Aut(F_2)$ with compact quotient. 
\end{lemma}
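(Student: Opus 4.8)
The plan is to identify the ring complex of type $\Aut(F_2)$ with the Brady complex $X_B$ (or a closely related space on which $\Aut(F_2)$ already acts properly cocompactly), and then verify that $X_B$, with its natural cell structure and the natural coloring of angles, satisfies the defining conditions of a ring complex of type $T$. Since $\Aut(F_2)$ is already known to act properly cocompactly on $X_B$ by the results of Brady and of Crisp--Paoluzzi \cite{CP}, the substance of the proof is purely local and combinatorial: check that each $2$-cell of $X_B$, with its angles colored, is isometric (as a colored polygon) to the equilateral triangle or the lozenge in $T$, and that the link of each vertex, equipped with the induced coloring and metric (each edge of the link being an arc whose length equals the corresponding angle of a face), is isometric as a colored circle to one of the three prescribed rings.

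\textbf{Step 1.} Recall the explicit description of the Brady complex: its $2$-cells are equilateral triangles and a second family of rhombi (with angles $\pi/3$ and $2\pi/3$), arising from the presentation of $\Aut(F_2)$ used by Crisp--Paoluzzi. Fix the coloring of the corners of these two cell types so that it matches the colored shapes in the set $T$ (the coloring records, combinatorially, which generator/relator orbit the corner belongs to). This is where one must be careful that the coloring is \emph{well defined} — i.e. $\Aut(F_2)$-invariant and consistent around each cell — which follows because the coloring is pulled back from the (finite) quotient $X_B/\Aut(F_2)$.

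\textbf{Step 2.} Enumerate the vertex orbits of $X_B/\Aut(F_2)$ — by cocompactness there are finitely many — and for each one compute the link as a colored metric graph, using the angle data from Step 1. One checks that every such link is a circle of total length $2\pi$ (this is automatic from flatness of the faces plus the Euler-characteristic / girth information for $X_B$, which is CAT(0) of dimension $2$), and that the coloring pattern around it is exactly one of the three rings in $T$: the three rings are precisely the three combinatorial types of vertex star that occur. The yellow arc of length $2\pi/3$ corresponds to the obtuse corner of a lozenge, and the $\pi/3$-arcs to triangle corners and to acute lozenge corners, so the matching is forced once the coloring convention of Step 1 is fixed.

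\textbf{Step 3.} Conclude: $X_B$ is a ring complex of type $\Aut(F_2) = T$, and the action of $\Aut(F_2)$ on $X_B$ is proper with compact quotient by \cite{CP} (and Brady). \textbf{The main obstacle} is bookkeeping in Step 2 — making sure the list of vertex-link types in the Brady complex is complete and that each genuinely coincides, \emph{as a colored ring}, with one of the three prescribed rings rather than merely having the right underlying length; in particular one must rule out any colored circle of length $2\pi$ built from the allowed arcs that does \emph{not} appear in $T$, i.e. check that the Brady complex does not produce "extra" link types. Once the coloring convention is pinned down this is a finite verification, but it is the step that actually uses the specific geometry of $\Aut(F_2)$ rather than soft cocompactness.
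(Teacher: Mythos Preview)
Your overall approach --- take $X$ to be the Brady complex --- is exactly what the paper does (its proof is the single sentence ``The Brady complex has the required properties''). However, your Step~2 misreads the definition of a ring complex, and as written the verification you describe would fail.

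You assert that ``every such link is a circle of total length $2\pi$''. This is false for the Brady complex: its vertex links are genuine trivalent graphs (see \cite[Fig.~6]{CP}, and the paper itself refers to this link later, in \S\ref{S - explicit examples 2}). The definition in \S\ref{S - Nonplanar} does not require the link to \emph{be} a coloured circle; rather, the \emph{rings} at a vertex are defined as the set of all coloured circles of length $2\pi$ inside the link, and the complex is ``of type $T$'' when every face and every such $2\pi$-cycle belongs to the prescribed set $T$. So what one actually has to check in Step~2 is not that each link equals one of the three rings, but that every embedded cycle of length $2\pi$ in each link, with its induced colouring, is isometric to one of the three prescribed rings. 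That is still a finite verification (the link graph is finite), but it is a different computation from the one you outlined, and your justification ``this is automatic from flatness of the faces plus the Euler-characteristic / girth information'' does not apply: girth $\geq 2\pi$ gives nonpositive curvature, not that the link is a single circle.
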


\begin{proof}
The Brady complex  has the required properties. 
\end{proof}

Note that ring complexes are naturally endowed with a global metric, namely the length metric. They are not necessarily nonpositively curved. 

A flat plane in $X$ is an isometric embedding $\IR^2\inj X$. Every flat plane in a ring complex is a ring puzzle of the same type. 

The \emph{$\ZI^2$ embedding problem} for $X$ is the question of whether
\[
\IR^2\inj X\impl \ZI^2\inj \G
\]
whenever $\G\acts X$ properly with $X/\G$ compact. 
Gromov  asks for example  if this is true for  $X$ simplicial and nonpositively curved,  say, of dimension 2 (see \cite{gromov1993}).   When $X$ is nonpositively curved we have
\[
\ZI^2\inj \G\impl \IR^2\inj X
\]
by the  flat torus theorem and the ``flat closing conjecture'' asserts the embedding problem has a positive answer. In \textsection \ref{S - random autf2} and \textsection\ref{S - explicit examples 2} the groups are nonpositively curved.

\begin{definition}
Consider a simplicial strip  $S\simeq \IR\times [a,b]$ and a simplicial subset $L\subset S$. The strip $S$ is said to be  uniquely $L$-embeddable in $X$  if for any two simplicial embeddings $f,f'\colon S\inj X$ and any translation $g\colon S\to S$ 
\[
f'=f\circ g \text{ on } L \ \ \impl\ \  f'=f\circ g \text{ on } S. 
\]
(Note that $g$ is then necessarily a simplicial map.)
\end{definition}

Consider the following two 1-strips in a ring complex of type $\Aut(F_2)$:
\begin{itemize}
\item the $\diamond$-strip:
 \[
\includegraphics[width=12cm]{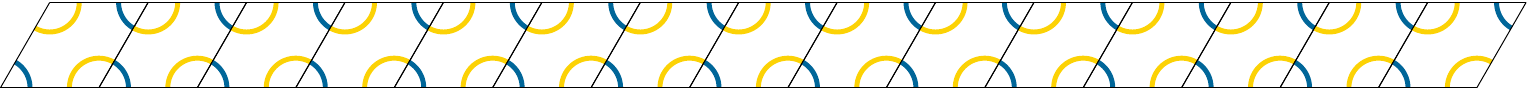}
\]
\item the $\triangle$-strip:
 \[
\includegraphics[width=12cm]{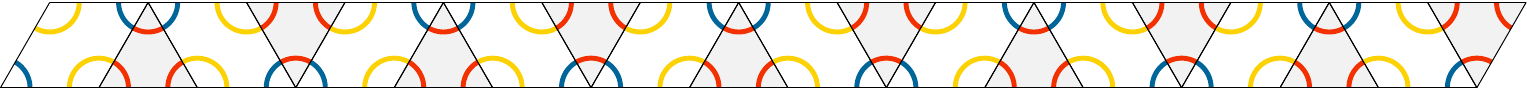}
\]
\end{itemize}

These strips are said to be \emph{uniquely $\diamond$-embeddable} if they are uniquely $L$-embeddable with respect to a lozenge. 

We say that $X$ is acylindrical if there is no embedding 
 \[\SI^1\times \IR\not \inj X\] 
 of a bi-infinite cylinder into $X$.

\begin{theorem}\label{Th - gromov Z2 puzzles} If in an acylindrical ring complex of type $\Aut(F_2)$  every $\diamond$-strip and every $\triangle$-strip is  uniquely $\diamond$-embeddable, then the $\ZI^2$ embedding problem has a positive solution.
\end{theorem}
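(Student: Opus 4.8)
The plan is to argue by contradiction: suppose $\Gamma\acts X$ properly with compact quotient, $X$ is an acylindrical ring complex of type $\Aut(F_2)$ satisfying the unique $\diamond$-embeddability hypothesis, and $\IR^2\inj X$ is a flat plane $F$; we must produce $\ZI^2\inj\Gamma$. The first step is to observe that $F$, being an isometrically embedded flat plane in a ring complex of type $\Aut(F_2)$, is an $\Aut(F_2)$ puzzle in the sense of \textsection\ref{S - ring puzzles}, hence falls into the classification of Theorem \ref{T - Puzzle classification}: it is one of the $9$ exceptional puzzles or a member of one of the $4$ infinite series. The strategy is then to show that for \emph{each} type of puzzle on this list, the structural rigidity we have proved (every $\diamond$-component extends canonically, $w$-blocks are forward-analytic, etc.) combined with the unique $\diamond$-embeddability of the $\diamond$-strip and $\triangle$-strip in $X$, forces a large-scale periodicity of $F$ inside $X$; and that periodicity, together with acylindricity, yields a $\ZI^2$ in $\Gamma$.

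The key steps, in order. \textbf{(1)} Identify in every puzzle $P$ from the classification a canonical pattern of parallel $\diamond$-strips and/or $\triangle$-strips: by Lemmas \ref{L - components are losenges}--\ref{ L - component 1xinfty} every puzzle is built by stacking such strips (2-strips/$\diamond$-strips in series A, $1\times n$ strips in series B, the strips of height $3$ or $4$ in series C, D, the exceptional cases being handled individually). \textbf{(2)} Use the unique $\diamond$-embeddability hypothesis to upgrade ``canonical pattern in the abstract puzzle'' to ``canonical pattern inside $X$'': if two $\diamond$-strips (or $\triangle$-strips) of $F$ agree on a single lozenge with a $\Gamma$-translate of $F$, they agree globally, which pins down how $F$ sits in $X$ relative to its translates. \textbf{(3)} Produce a nontrivial $g\in\Gamma$ stabilizing (a sub-strip or the whole of) $F$ and acting on it by a translation: since $X/\Gamma$ is compact, among the countably many $\Gamma$-translates of a fixed bi-infinite strip $S\subset F$ two distinct ones, $S$ and $gS$, must be ``close'' in the sense of agreeing on a lozenge; unique $\diamond$-embeddability then gives $gS=S$ with $g$ acting as a translation along $S$. \textbf{(4)} Get a \emph{second}, independent translation: here acylindricity enters — if only one direction of translation were available, the quotient of $F$ (or of the union of its periodic strips) by $\langle g\rangle$ would be a cylinder $\SI^1\times\IR$ embedded in $X$, contradicting acylindricity; so there is $h\in\Gamma$ translating $F$ in a transverse direction, and $\langle g,h\rangle$ acts on $F\cong\IR^2$ cocompactly by translations, hence contains (a finite-index subgroup isomorphic to) $\ZI^2$, giving $\ZI^2\inj\Gamma$.

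The main obstacle is step (4), and more precisely the passage from ``$F$ contains a periodic strip'' to ``$F$ is periodic in two independent directions'' in the cases where the puzzle is only \emph{singly} periodic as an abstract tessellation — the half-puzzles, the sector puzzles, the $V$-puzzle, the one-sided series, and especially the uncountable families A) and B) parametrized by arbitrary height sequences, which have no a priori transverse symmetry at all. The resolution must be that such a puzzle cannot actually be realized as a flat plane with cocompact $\Gamma$-action while remaining acylindrical: the lack of a transverse period forces infinitely many genuinely different $\Gamma$-orbits of local configurations along the non-periodic direction (contradicting compactness of $X/\Gamma$), \emph{or} forces a cylinder (contradicting acylindricity). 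Making this dichotomy precise — showing that unique $\diamond$-embeddability propagates a single coincidence of strips across the whole plane, so that a $\Gamma$-translate agreeing with $F$ on one strip agrees on a half-plane and then, by the rigidity of sectors and $w$-strips, on all of $F$ — is the technical heart; once $F$ is shown to be $\Gamma$-periodic in one direction with $X/\Gamma$ compact, the cylinder argument against acylindricity supplies the transverse direction and closes the proof.
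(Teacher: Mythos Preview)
Your overall shape---classify the flat as a puzzle, find parallel strips, use unique $\diamond$-embeddability and cocompactness to produce group elements---is the right one, but two of the load-bearing steps are missing or wrong.

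First, you never explain how to handle the aperiodic puzzles, and your ``main obstacle'' paragraph shows you are trying to argue directly with the given flat $F$. The paper does not do this. Its first lemma uses a limiting argument: starting from \emph{any} flat $\Pi$, choose basepoints $x_n$ drifting into a $\diamond$-sector (or into $\diamond$-strips of growing height, etc.), translate back into a fundamental domain by $s_n\in\Gamma$, and extract a Gromov--Hausdorff limit flat $\Pi_\infty$. Running this through each case of the classification yields a (possibly different) flat in $X$ that contains infinitely many parallel $\diamond$-strips or infinitely many parallel $\triangle$-strips. This replacement step is what dissolves your difficulty with the one-sided series, the sector puzzles, the $V$-puzzle, and the uncountable families; you do not need to prove the original $F$ is doubly periodic.

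Second, your step (4) misuses acylindricity. Having a single $g\in\Gamma$ translating along a strip $S\subset F$ does \emph{not} produce a cylinder $\SI^1\times\IR$ embedded in $X$: the flat $F$ is still an embedded $\IR^2$, and $F/\langle g\rangle$ is a quotient, not a subspace of $X$. The paper obtains the second element $s$ by a different mechanism: among the infinitely many \emph{parallel} strips $f_k\colon S\hookrightarrow X$ in the flat, cocompactness gives $s_k\in\Gamma$ with $s_k\circ f_k=s_1\circ f_1$ on a lozenge $L$, hence on all of $S$ by unique $\diamond$-embeddability; set $s=s_k^{-1}s_1$, so $s$ carries one strip to a disjoint parallel one. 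A further local-finiteness argument upgrades the along-strip element $t$ so that $t$ stabilises the ``between'' strip $f\vee f'$, and then one computes that every commutator $[t^q,s]$ fixes a point of $X$, hence lies in a finite group, forcing $[t^{q-q'},s]=e$ for some $q\neq q'$. Acylindricity enters only at the very end, and in the opposite direction from what you wrote: it shows $s$ has \emph{infinite} order, since if $s$ had finite order the union $\bigcup_k s^k(\ran f\vee f')$ would be an embedded cylinder in $X$. This is the step your proposal does not contain.
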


The proof of Theorem \ref{Th - gromov Z2 puzzles} relies on two lemmas. The first lemma uses the classification. 




\begin{lemma} Suppose that $X$ is a ring complex $X$  of type $\Aut(F_2)$ and fix  $\G\acts X$ with $X/\G$ compact.  
If $X$ contains a flat, then it contains a flat that contains infinitely many parallel $\diamond$-strips, or a flat that contains infinitely many parallel $\triangle$-strips.  
\end{lemma}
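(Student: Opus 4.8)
The plan is to combine the classification of $\Aut(F_2)$ puzzles (Theorem~\ref{T - Puzzle classification}) with a compactness argument coming from the cocompact action $\G\acts X$. A flat $\IR^2\inj X$ is, by definition, a ring puzzle of type $\Aut(F_2)$, hence one of the $9$ exceptional puzzles or a member of one of the $4$ infinite series; the proof then proceeds case by case along this list. The unifying observation is that each kind of strip occurring inside an $\Aut(F_2)$ puzzle --- a $\diamond$-strip of height $n$, a 2-strip, a $w$-strip, and so on --- contains a height-one $\diamond$-strip or a $\triangle$-strip (for $w$-strips this is exactly Lemma~\ref{L - w-blocks}), so that once we have produced a flat foliated by an infinite pairwise parallel family of strips, a pigeonhole argument yields either an infinite pairwise parallel family of $\diamond$-strips or an infinite pairwise parallel family of $\triangle$-strips, which is the conclusion.

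For the ``plane-like'' puzzles --- the $\diamond$-puzzle, the bounded and one-sided subseries of the alternating 2-strip/$\diamond$-strip series and of the $1\times n$ series, and the infinite-height and half 3-strip puzzles --- the given flat already contains such an infinite parallel family (bi-infinitely or semi-infinitely many pairwise parallel bi-infinite strips), and one is done directly. The remaining ``sectorial'' puzzles --- the $2\times n$ series, the acute and obtuse sector puzzles (Lemmas~\ref{L - acute},~\ref{L - obtuse}), the opposite and adjacent acute sector puzzles (Lemma~\ref{L - extension 4-strip}), the star and half-puzzles of type $2\times\infty$ (Lemma~\ref{L - 2 x infty puzzle}), the finite-height 3-strip puzzles (Lemma~\ref{L -  3-strip}), the two-sided subseries and the $V$-puzzle --- each contain a \emph{sector}, i.e.\ a region bounded by two rays, that is foliated by pairwise parallel semi-infinite strips. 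For these I would run the standard ``flats persist in cocompact limits'' argument: choose basepoints $x_n$ in the flat $F$ lying deep inside the sector, at distance tending to $\infty$ from its apex and from both bounding rays; translate by $\gamma_n\in\G$ so that $\gamma_n x_n$ stays in a fixed compact set; and extract a subsequential pointed limit $F_\infty$ of the flats $(\gamma_n F,\gamma_n x_n)$. Since a ring complex of type $\Aut(F_2)$ has only finitely many local combinatorial types (cocompactness together with local finiteness), $F_\infty$ is again an isometrically embedded flat $\IR^2\inj X$; and the parallel semi-infinite strips through points near $x_n$, whose bounded initial segments have been stretched to arbitrary length, become in $F_\infty$ a bi-infinite family of pairwise parallel bi-infinite strips, which by the previous paragraph gives the required flat.

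The main obstacle is the bookkeeping: one must actually go through all $9$ exceptional puzzles and $4$ series of Theorem~\ref{T - Puzzle classification}, decide for each whether it is ``plane-like'' or ``sectorial'', and in the sectorial case pin down the relevant sector together with the direction of its strip-foliation --- the ``terminating'' members of the alternating and $1\times n$ series and the finite-height 3-strip puzzles are where the $\diamond$-versus-$\triangle$ dichotomy really is needed, and one must make sure no puzzle escapes both alternatives. The limiting argument itself is routine, but it does rely on the forward-analyticity statements (Lemma~\ref{L - w-blocks}, Lemma~\ref{L -NPC}) to guarantee that a $\diamond$-, $\triangle$- or $w$-strip through a point far from the apex is not destroyed on passing to $F_\infty$, so that the infinite parallel family genuinely survives in the limit and genuinely is bi-infinite.
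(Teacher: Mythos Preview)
Your approach is essentially the paper's own: use the classification of $\Aut(F_2)$ puzzles together with a cocompactness limiting argument, translating basepoints deep into a sector and extracting a limit flat with the desired parallel strip structure. The paper streamlines your case-by-case bookkeeping by observing that the limit can always be taken to be one of three specific puzzles (the $\diamond$-puzzle, a uniformly bounded $1\times n$ puzzle, or a uniformly bounded alternating 2-strip/$\diamond$-strip puzzle), from which the conclusion is immediate; your appeal to Lemmas~\ref{L - w-blocks} and~\ref{L -NPC} in the limiting step is not actually needed, since persistence of the strip pattern in the limit is a matter of local combinatorics and compactness alone.
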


\begin{proof}
We claim that if $X$ contains a flat, then it contains a flat which is isomorphic to either 
a $\diamond$-puzzle, 
a puzzle of uniformly bounded type $1\times n$ (including $n=2$), or
a puzzle of uniformly bounded alternating 2-strip/$\diamond$-strip type.
The lemma follows easily from this. 

The proof of the claim is a standard sort of compactness argument which relies on the classification. 
Let $\Pi$ be a flat and $K$ be a relatively compact fundamental domain for $\G\acts X$. 

Suppose for example that $\Pi$ contains a $\diamond$-sector $S$. Choose vertices $x_n$ in $S$ moving apart from the boundary  of $S$,   and $s_n\in \G$ such that $s_nx_n\in K$.  For every $r>0$ we can find infinitely many $n$ for which the flats $s_n\Pi$ coincide on $r$-neighbourhood of  $K$.  A diagonal argument delivers a flat \[
\Pi_\infty:=\lim_{n \text{ in  a subsequence}} s_n\Pi
\]
 with $x\in \Pi_\infty$ such that for every $r$ and every $n$ large enough (depending on $r$) the ball of radius $r$ and center $x$ in $\Pi$ coincide with the ball of radius $r$ and center $s_n^{-1}x$ in $\Pi_n$.  This shows that $\Pi_\infty$ is the $\diamond$-flat.
 
This covers cases a), d), e), f) and C) and D) of the classification. The same argument works if $\Pi$ contains $\diamond$-strips of arbitrary large height, showing that case B) of the classification leads to either a $\diamond$-flat, or $\Pi$ a uniformly bounded flat of type $1\times n$ (including $n=2$). This covers cases b) and B). 
 
In cases c) and g) it is easy to choose $x_n$ so that $\Pi_\infty$ is the alternating 2-strip/$\diamond$-strip flat where the $\diamond$-strip has height 1. Finally in case A), either the height is uniformly bounded, or we can choose $x_n$ so that $\Pi_\infty$ is the $\diamond$-flat.   
  \end{proof}

\begin{lemma} Suppose that $X$ is a ring complex $X$  of type $\Aut(F_2)$ and fix a proper action $\G\acts X$ with $X/\G$  compact.  
 Assume furthermore that 
\begin{itemize}
\item  $X$ is  acylindrical and uniformly locally finite 
\item $X$ contains a flat that contains infinitely many parallel copies of a simplicial strip $S$ 
\item $S$ is uniquely $L$-embeddable in $X$ for some non empty compact simplicial set $L$
\item  there are infinitely many distinct simplicial translates of $L$ in $S$.
\end{itemize} 
 Then $\ZI^2\inj \G$. 
\end{lemma}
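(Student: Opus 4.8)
The plan is to extract a $\ZI^2$ subgroup from the stabilizer of the flat $\Pi$ that contains infinitely many parallel copies of $S$. Write $S_0, S_1, S_2, \ldots$ for an infinite family of disjoint parallel copies of $S$ inside $\Pi$, ordered consistently along the transverse direction of $\Pi$. Since $X/\G$ is compact and $X$ is uniformly locally finite, the orbit $\G\cdot S_0$ meets only finitely many $\G$-orbits of strips, and a standard pigeonhole/compactness argument produces $\G\cdot S_i = \G\cdot S_j$ for some $i<j$: concretely, picking $\gamma_i\in\G$ with $\gamma_i$ carrying a fixed vertex of $S_i$ into the compact fundamental domain $K$, uniform local finiteness forces the marked strips $\gamma_i(S_i)$ (with their embeddings into $X$) to repeat, so there exist $i<j$ and $g\in\G$ with $g(S_i)=S_j$ as subsets of $X$ with compatible simplicial structure. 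Composing, we obtain $g\in\G$ and an induced translation of the strip pattern moving $S_i$ onto $S_j$; call the resulting transverse translation-like symmetry $t$.

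Next I would produce a second, independent generator. Inside each copy $S_k$ there are, by hypothesis, infinitely many distinct simplicial translates of $L$; these translates march off to infinity along $S_k$. Running the same compactness argument along the \emph{longitudinal} direction of a single strip $S_0$ — pushing a sequence of such translates of $L$ back into $K$ by elements of $\G$ and extracting a repetition — yields an element $s\in\G$ that preserves $S_0$ (as a subset of $X$) and acts on it as a nontrivial translation along its axis. Here is where the unique $L$-embeddability hypothesis does the real work: a priori $s$ only matches up a copy of $L$ with another copy of $L$ inside $X$, but since $S_0$ is uniquely $L$-embeddable and both embeddings ($S_0$ and $s(S_0)$) agree on a translate of $L$, they must agree on all of $S_0$, so $s$ genuinely stabilizes the strip $S_0$ and acts by a longitudinal translation. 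The acylindricity hypothesis guarantees $s$ has infinite order: the subgroup $\langle s\rangle$ can only act with bounded orbits on $S_0$ if $S_0/\langle s\rangle$ is a finite cylinder, which is exactly what $\SI^1\times\IR\not\inj X$ forbids.

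Finally I would check that $t$ and $s$ generate a copy of $\ZI^2$. The element $t$ moves $S_i$ to $S_j$; iterating and using that the transverse direction of $\Pi$ is genuinely infinite (again via acylindricity, to rule out the $S_k$ cycling back around), $\langle t\rangle$ is infinite cyclic. Both $t$ and $s$ can be arranged to preserve the flat $\Pi$ — $s$ by unique $\diamond$-embeddability propagated from $S_0$ across $\Pi$, and $t$ by construction — so $\langle s,t\rangle$ acts on the Euclidean plane $\Pi$ by translations (the only way to act cocompactly-compatibly on a flat while commuting with enough structure), hence is abelian; since $s$ acts along the strips and $t$ acts transversally, their translation vectors on $\Pi$ are linearly independent, and $\langle s,t\rangle\cong\ZI^2$. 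The action being proper then forces $\ZI^2\inj\G$. The main obstacle I anticipate is the bookkeeping in the two compactness arguments: ensuring that the limiting repetition really is induced by a \emph{group element} respecting the full simplicial structure (not merely a local isometry of a neighbourhood), and confirming that the two generators obtained have independent directions rather than both lying along the strips — this is precisely what "infinitely many parallel copies" plus "infinitely many translates of $L$ inside each copy" is designed to supply, so the argument should go through cleanly once the limits are set up carefully.
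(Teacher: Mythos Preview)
Your two pigeonhole extractions (transverse and longitudinal) and your use of unique $L$-embeddability to upgrade ``agreement on $L$'' to ``agreement on $S$'' are exactly right, and match the paper. The gap is in the endgame: you assert that both generators can be arranged to \emph{preserve the flat $\Pi$} and hence act on it by translations, giving commutation for free. Nothing in the hypotheses supports this. Unique $L$-embeddability is a rigidity statement about the strip $S$, not about $\Pi$; it gives you no control over what your longitudinal element does to $\Pi\setminus S_0$, and the transverse element, which merely carries one strip to another inside $X$, has no reason to respect $\Pi$ at all. So the claim ``$\langle s,t\rangle$ acts on $\Pi$ by translations, hence is abelian'' is unjustified, and this is precisely the crux of the lemma.

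The paper never tries to make the elements preserve $\Pi$. Instead (swapping your names to match theirs: their $t$ is longitudinal, their $s$ is transverse) it argues as follows. The longitudinal element $t$ a priori only stabilizes the single strip $\ran f$. The missing idea is to pass to the wider strip $f\vee f'\subset\Pi$ spanned by the two parallel copies: by \emph{uniform local finiteness}, only finitely many strips isometric to $\ran(f\vee f')$ contain $\ran f$, so some power $t^m$ stabilizes $f\vee f'$ and therefore also intertwines with $f'$. From $t^q\circ f'=f'\circ h^q$ and $f'=s\circ f$ one then computes $t^q s f = s t^q f$, so every commutator $[t^q,s]$ lies in the (finite, by properness) stabilizer of a point of $\ran f$; pigeonhole on $q$ gives $[t^{q-q'},s]=e$. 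Infinite order of $t$ is immediate from the intertwining; infinite order of $s$ uses acylindricity applied to $\bigcup_k s^k(\ran(f\vee f'))$ (your placement of the acylindricity argument on the longitudinal element is backwards). The commutation thus comes from a point-stabilizer trick, not from a common invariant flat.
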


\begin{proof} Let  $S$ denote the strip given with simplicial set $L\subset S$.
 
Let $\Pi$ be a flat in $X$ that contains infinitely many parallel copies of  $S$, say 
\[
f_k\colon S\inj X\  \text{ with } \ran f_k\parallel  \ran f_l,\ \   f_k\neq   f_l \ \ \ (k\neq l\geq 1)
\]
Note that $f_k\neq f_l$ on $L$ if $k\neq l$.

Since $X/\G$ is compact there exist elements $s_k\in \G$ such that 
\[
s_k\circ f_k = s_1 \circ f_1\text{ on } L
\]
for infinitely many $k\geq 1$. 

Since $S$ is uniquely $L$-extendable we have that
\[
s_k\circ f_k = s_1 \circ f_1\text{ on } S
\]
for infinitely many $k\geq 1$.

Choose $k$ large enough so that $\ran f_k\cap \ran f_1=\emptyset$ and write $f=f_1$, $f'=f_k$ and $s=s_k^{-1}\circ s_1$ so that $f' =s\circ  f$.

By assumption we can find infinitely many distinct translates of $L$ in $S$. Let $g_n\colon S\to S$ denote the corresponding translations $(n\geq 1)$. Since $X/\G$ is compact, there exist an index $n_0\geq 1$ and  elements $t_n\in \G$ such that 
\[
t_n\circ f\circ g_n =  f\circ g_{n_0} \text{ on } L
\]
for infinitely many $n\geq 1$.

Since $S$ is uniquely $L$-extendable we have that
\[
t_n\circ f\circ g_n =  f\circ g_{n_0} \text{ on } S
\]
for infinitely many $n\geq 1$.
 
Choose $n$ large enough so that $f\circ g_n(L)\cap  f\circ g_1(L)=\emptyset$ and write  $g=g_{n_0}\circ g_{n}^{-1}$ and $t=t_n$ so that we have the intertwining relation
\[
t\circ f =  f\circ g
\]
on $S$. Note that $g$ is a simplicial translation. Furthermore $t^m\circ f =  f\circ g^m$ for every $m\in \IZ$.
 
Denote by $f\vee f'\colon [a,b]\times \IR\to \Pi\subset X$ the simplicial strip lying ``between $f$ and $f'$'' in $\Pi$ (whose range is the convex closure of both strips in $\Pi$).  Thus there is $a<a'<b'<b$ such that $S$ is simplicially isomorphic  to both $[a,a']\times \IR$ and $[b',b]\times \IR$, and the restriction of $f\vee f'$ to these strips is $f$ and $f'$, respectively.

 Since $X$ is locally finite, there are only finitely many strips in $X$ isometric to $\ran f\vee f'$  containing $\ran f$. Therefore there exists $m$ and a translation $h\colon [a,b]\times \IR\to [a,b]\times \IR$ such that 
 \[
 t^m\circ (f\vee f')=(f\vee f')\circ h.
 \] 
Note that $h$ coincide with $g^m$ on $[a,a']\times \IR$, so we will replace $t$ with $t^m$ to obtain the relations
 \[
t\circ f =  f\circ h\text{ and } t\circ (f\vee f')=(f\vee f')\circ h.
\]
on $S\simeq [a,a']\times \IR$ and $[a,b]\times \IR$. This implies that 
 \[
t^q\circ f =  f\circ h^q
 \text{ and } t^q\circ (f\vee f')=(f\vee f')\circ h^q.
\]
on $S\simeq [a,a']\times \IR$ and $[a,b]\times \IR$, respectively, for every $q\in \IZ$, 
and in particular that 
\[
t^q\circ f' =  f'\circ h^q
\]
on $S\simeq [b',b]\times \IR$, for every $q\in \IZ$.

Then
\[
t^q\circ s\circ f= t^q\circ f' = f'\circ h^q = s \circ f\circ h^q=s\circ t^q\circ f. 
\]

Choose a point $x\in \ran f$. The relation shows that for every $q\in \ZI$ the commutator
\[
[t^q,s]\in \Stab_\G(x),
\]
which is a finite group by assumption. Therefore we can find $q\neq q'$ such that 
\[
[t^q,s]=[t^{q'},s]
\]
so 
\[
[t^{q-q'},s]=e.
\]
Since 
\[
t^q\circ f =  f\circ h^q
\]
for every $q$, it follows that $t$ has infinitely order in $\G$.

Assume towards a contradiction that $s$ has finite order in $\G$. Then the subset of $X$ 
\[
\bigcup_{k=0}^{\text{order of }s} \ran s^k\circ (f\vee f')
\]
is a simplicial cylinder in $X$, whose existence is precluded  by assumption. 

Thus, $\langle t^{q-q'},s\rangle\simeq \ZI^2$ in $\G$.
\end{proof}

\begin{remark} The space of marked flats $\MF(X)$ is the set
\[
\MF(X)=\{\IR^2\inj X\}
\]
of all isometric embeddings $\IR^2\inj X$.
This is a classical object associated with $X$; it has been studied in particular by Pansu and is useful when studying the rigidity of symmetric spaces (cf.\ \cite[\textsection 14]{mostow}). 

The space $\MF(X)$ has two commuting actions of $\IR^2$ and $\Gamma$ 
\[
t\cdot f:= f\circ t^{-1} \text{ and } s\cdot f:= s\circ f
\]
respectively at the source and the range, for $t$ a translation of $\IR^2$, $s\in\G$, and $f\colon \IR^2\inj X$ an embedding.

If $X$ is locally finite and $X/\G$ is compact then $\MF(X)/\G$ is a compact space. It is not empty if $X$ is not hyperbolic, and it is endowed with a structure of a lamination whose leaves are associated with flat planes in $X$ and given by the action of  $\IR^2$.

If $X$ is a ring complex of type $\Aut(F_2)$ we have an obvious map
\[
\MF(X)\to \MP({\Aut(F_2)})
\]
which is continuous and open. 

Note that in the case of ring complexes of type $\Aut(F_2)$, the subset $\MF_x(X)$ corresponding to  flats in $X$ of type $x=$ a), b), $\ldots$ is closed whenever flats of type $x$ are geometrically periodic (i.e.\ the simplicial isometry group acts with compact quotient). This holds for flats of type a) or b), for example. 

A ring complex $X$ is said to be \emph{full} if the map $\MF(X)\to \MP(T)$ is surjective with respect to $T=T(X)$, where $T(X)$ denotes the type of $X$ (minimal sets of shapes and rings in $X$). 

This notion of ``fullness'' can be thought of as a weakening of the idea of ``space of maximal rank''  from rank interpolation.

For example the Brady complex is full, and we think of it as a space of maximal rank among ring complexes of type $\Aut(F_2)$. It plays the role of Euclidean buildings in \cite{random}, which are the prototypical ``spaces of maximal rank'' (in their respective classes, namely among spaces of type $\widetilde A_2$, of type $\widetilde B_2$ or of type $\widetilde G_2$). 

Observe also that the classification of Euclidean buildings into types is primarily a classification of their  apartments (flats) and associated Coxeter groups, which in spirit is not very far from the definition of ring complexes being ``of type $\Aut(F_2)$'' described in the present paper.       
 \end{remark}

\section{Randomizing $\Aut(F_2)$}\label{S - random autf2}

Let $\G_p\leq \G_0:=\Aut(F_2)$ be finite index subgroups and consider the deterministic data $(\G_0,\{\G_p\}, X_0)$ in the sense of \cite{random}, where $X_0$ is the Brady space.

We only discuss the density model. Results can also be established in the bounded model (for example). 

Recall that if $\IP_p$ denotes a process for selecting finite random subsets of elements in the set 
\[
\sC_p:=\{\G_p\text{-orbits of faces in }X_0\}
\]
then the density model can be defined in the following way, given a  parameter $\delta\in (0,1)$.

\begin{definition}  The \emph{density model} over $(\G_0,\{\G_p\},X_0)$ is  associated with
\begin{center}
 $\IP_p:=$ ``choose  $|\sC_p|^\delta$ chambers in $\sC_p$, uniformly and independently at random''. 
\end{center}
\end{definition}

We have the following  phase transition at $\delta=\frac 2 3$.

\begin{proposition}[See {\cite[\textsection5]{random}}]
If  $\G$ denotes the  random group in the density model over $(\G_0,\{\G_p\},X_0)$ then
\begin{enumerate}
\item  if $\delta<2/3$ then $\G$  acts properly on a CAT(0) space $X$ without boundary such that $X/\G$ compact
\item  if $\delta>2/3$ then $\G$ splits off a free factor isomorphic to a free group $F_n$ on $n$ generators.
\end{enumerate}
\end{proposition}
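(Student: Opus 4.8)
Since the proposition is quoted from \cite[\textsection5]{random}, the plan is to recall the mechanism there rather than reprove it from scratch, but let me indicate how the two halves go. Write $N_p:=|\sC_p|$; as $X_0/\G_p$ is a finite $2$-complex with $\asymp N_p$ cells in each dimension, the density model retains $N_p^\delta$ chambers out of $\asymp N_p$, i.e.\ each chamber is kept with probability $q_p\asymp N_p^{\delta-1}$. The random group $\G$ is the fundamental group of the complex assembled from the retained chambers and their faces, and by construction $\G$ acts properly and cocompactly on the universal cover $X$, which is a ring complex of type $\Aut(F_2)$. So the dichotomy is exactly whether $X$ is CAT(0) without boundary, or whether the presentation that $X$ encodes degenerates; the transition is governed by a first/second-moment analysis of short local configurations, and the whole point is that the balance of exponents flips at $\delta=2/3$.

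For part (1), in the subcritical regime $\delta<2/3$, I would verify Gromov's link condition for $X$ with overwhelming probability. Each vertex link is a finite graph whose edges are sampled (essentially independently, each with probability $q_p$) from the retained chambers incident to that vertex, and in dimension $2$ with the prescribed shapes the CAT(0) condition reduces to these link graphs having girth at least $2\pi/\alpha_{\min}$, i.e.\ containing no short combinatorial loop. A short loop involves only a bounded number of chambers, so a union bound over the $\asymp N_p$ vertices estimates the expected number of such loops by $\asymp N_p\cdot q_p^{\,c}$, where $c$ is the combinatorial weight of the cheapest obstruction; the arithmetic of $\delta$ and $c$ is arranged so that this tends to $0$ precisely when $\delta<2/3$, which gives the link condition with overwhelming probability and hence $X$ CAT(0). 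That $X$ has no boundary and $X/\G$ is compact are then structural consequences of the construction together with the (again overwhelmingly probable, below the threshold) event that every edge lies on at least two retained chambers, so that no free hanging part survives; this is where the specific geometry of the Brady complex and of $\G_p\le\Aut(F_2)$ is used.

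For part (2), in the supercritical regime $\delta>2/3$ the same count runs the other way: with overwhelming probability the cheap obstruction that makes $N_p\cdot q_p^{\,c}$ large actually appears, and I would argue that its presence forces a free splitting — concretely, a localized deficiency (generators minus relators) of the presentation stays positive on a nontrivial subcomplex, and an elementary collapsing/Tietze argument peels off a free factor $F_n$. The hard part, and the real content of \cite[\textsection5]{random}, is pinning down the critical obstruction and its weight $c$ (hence the value $2/3$ and not some other exponent), upgrading ``expectation $\to\infty$'' to ``occurs with overwhelming probability'' at criticality via a second-moment estimate, and — on the subcritical side — feeding the probabilistic link estimates into Gromov's link condition for this particular triangle/lozenge complex while keeping control of the global geometry (absence of boundary, cocompactness). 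These are exactly the computations carried out in \cite{random}, to which we refer for the details.
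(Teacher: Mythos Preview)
The paper does not prove this proposition; it simply cites \cite[\textsection5]{random}. Your sketch, however, rests on a misreading of the model and therefore misidentifies the mechanism behind both halves.

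In the density model here the random set $A\subset\sC_p$ of size $|\sC_p|^\delta$ is the set of chambers that are \emph{removed}: the random space is $X_A:=X_0-\bigcup_{C\in A}\mathring C$ (this is explicit later in the paper, and is the setting of \cite{chambers,random}). You have it reversed, writing that $|\sC_p|^\delta$ chambers are ``retained'' with probability $q_p\asymp N_p^{\delta-1}$. As a consequence your part~(1) argument is aimed at the wrong target: since $X_A$ is a subcomplex of the CAT(0) Brady complex $X_0$, its vertex links embed in those of $X_0$, so the girth (link) condition is inherited for free and there is nothing probabilistic to check there. The genuine issue at $\delta<2/3$ is the ``without boundary'' clause: one must show that removing $|\sC_p|^\delta$ chambers does not, with overwhelming probability, create free faces that survive the collapsing process (the paper even notes that for $1/2<\delta<2/3$ one contracts free faces). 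That is the counting problem whose exponent balance produces the threshold $2/3$, not a girth estimate.

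Your part~(2) is similarly off. The free factor does not come from a ``cheap obstruction'' to the link condition appearing; rather, once enough chambers are removed ($\delta>2/3$) some vertex links become disconnected (again stated in the paper), and a disconnected link yields a free splitting of the fundamental group by standard topology. So the correct dichotomy is: few removals $\Rightarrow$ no boundary after collapse; many removals $\Rightarrow$ disconnected links $\Rightarrow$ free factor. Your first/second-moment outline is the right \emph{shape} of argument, but it is applied to the wrong events.
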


\begin{lemma}
The random group in the density model over $(\G_0,\{\G_p\},X_0)$ acts on a ring complex of type $T$.
\end{lemma}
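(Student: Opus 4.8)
The plan is to exhibit the CAT(0) space $X$ on which the random group $\G$ acts (from the preceding proposition, in the regime $\delta<2/3$) as a ring complex of type $T$, and then observe that the case $\delta>2/3$ is vacuous for the purposes of the lemma since we only need existence of \emph{some} ring complex acted upon with compact quotient, or alternatively restrict attention to $\delta<2/3$ which is the interesting range. The key point is that $X$ is obtained from the deterministic data $(\G_0,\{\G_p\},X_0)$ by the construction of \cite{random}, and since $X_0$ is the Brady complex --- which is already a ring complex of type $T$ by the lemma asserting $\Aut(F_2)$ acts properly cocompactly on a ring complex of type $\Aut(F_2)$ --- the randomization construction only deletes and re-glues $\G_p$-orbits of faces, so the faces of $X$ are still copies of the equilateral triangle and the lozenge from $T$, with the same coloured angles inherited from $X_0$.

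First I would recall precisely how $X$ is built in \cite{random}: one passes to the finite-index subgroup $\G_p$, takes the quotient $X_0/\G_p$, removes the randomly selected chambers (faces) in $\sC_p$, and then takes a suitable covering / developing construction to produce the space $X$ with its $\G$-action. The faces of $X_0/\G_p$ are in bijection with $\G_p$-orbits of faces of $X_0$, each carrying the colouring coming from $X_0$; removing some of them and developing does not introduce new face shapes or new angle colourings. Hence every face of $X$ is isometric, as a coloured polygon, to one of the two shapes in $T$.

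Second I would check the condition on \emph{rings}, i.e.\ on vertex links: the link of a vertex in $X$ is a subcomplex of the link of the corresponding vertex in the (developed cover of the) Brady complex, obtained by deleting the edges corresponding to removed faces. Since deleting faces only deletes arcs from the coloured circle of length $2\pi$ around a vertex, the resulting ring is still a union of arcs drawn from the three prescribed rings of $T$ --- more precisely, each ring of $X$ embeds isometrically, respecting colours, into one of the three rings of $T$, which is exactly what is required for $X$ to be \emph{of type} $T$ (the type being the minimal sets of shapes and rings, and sub-rings of admissible rings are admissible). I would phrase this carefully using the definition of ``type'' given at the start of \textsection\ref{S - Nonplanar}, noting that the type of $X$ is contained in $T$ and that is all the statement claims.

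The main obstacle, and the step that needs genuine care rather than a one-line invocation, is the ring/link verification: one must be sure that the developing construction in \cite{random} does not create vertices whose link fails to embed in one of the three prescribed rings --- for instance that the gluing of chambers along edges in the random cover never forces an angle configuration around a vertex that is not a sub-arc-pattern of ring 1, 2, or 3. This follows because the construction is \emph{local} over $X_0$: every vertex of $X$ maps to a vertex of $X_0$ and its link injects into the link of that vertex in the appropriate cover, which is a cover branched only along the deleted locus, so locally the link is a subcomplex of a link in $X_0$. Once this locality is spelled out, the lemma follows; I would therefore organize the proof as (i) recall the construction of $X$, (ii) identify faces with the two shapes of $T$, (iii) identify links as subcomplexes of Brady-complex links and hence as sub-rings of the three rings of $T$.
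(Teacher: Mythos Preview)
Your approach is essentially the paper's: the random space $X$ is locally isometric to a subspace of the Brady complex $X_0$, so its links embed in those of $X_0$ and its shape set is a subset of that of $X_0$, whence $X$ is of type $T$. The paper's proof handles all densities $\delta\in(0,1)$ uniformly (noting only that for $1/2<\delta<2/3$ the contraction of free faces removes free paths in links, and for $\delta>2/3$ links may be disconnected), so your hedging about restricting to $\delta<2/3$ is unnecessary; also, your phrase ``sub-rings of admissible rings are admissible'' is slightly off---rings are full $2\pi$-circles, and what you actually need is that any $2\pi$-cycle in a subgraph of a Brady link is already one of the three prescribed rings.
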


\begin{proof}
By construction the random group $\G$ at  arbitrary density acts on a space $X$ which is locally isometric to a subspace of $X_0$ and therefore the links of $X$ embed $L$. Note that if $1/2<\delta<2/3$ the contraction of free faces removes free paths in the link, and if $\delta>2/3$ the links are not necessarily connected. The shape set of $X$ is also a subset of the shape set of $X$, so $X$ is of type $T$. This  is true  more generally of the $\alpha$-model \cite[\textsection 4]{random}.
\end{proof}

The following result is an application of Theorem \ref{Th - gromov Z2 puzzles}.

\begin{theorem}
For arbitrary deterministic data $(\G_0,\{\G_p\},X_0)$ with $\G_0=\Aut(F_2)$, $\G_p$ are finite index subgroups, and $X_0$ denotes the brady complex, the random group in the density model over $(\G_0,\{\G_p\},X_0)$ satisfies the flat closing conjecture. 
\end{theorem}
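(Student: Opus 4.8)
The plan is to apply Theorem \ref{Th - gromov Z2 puzzles} to the space $X$ on which the random group $\G$ acts. By the preceding lemma, $X$ is a ring complex of type $T = T(\Aut(F_2))$, and by the phase transition proposition (in the relevant range of densities $\delta$, for which the flat closing conjecture is nontrivial) $X$ is CAT(0) with compact quotient $X/\G$ under a proper action, so in particular the hypothesis ``$\ZI^2 \inj \G \impl \IR^2 \inj X$'' half of the conjecture is automatic from the flat torus theorem. It therefore suffices to verify the two hypotheses of Theorem \ref{Th - gromov Z2 puzzles}: that $X$ is acylindrical, and that every $\diamond$-strip and every $\triangle$-strip in $X$ is uniquely $\diamond$-embeddable. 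Both of these should hold with overwhelming probability, and this is where the randomness enters the argument.

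First I would address acylindricity. A bi-infinite simplicial cylinder $\SI^1 \times \IR \inj X$ would project to a bounded annular region in $X/\G$ since $X/\G$ is compact; pulling back a fundamental domain and using that the random group is obtained by selecting only $|\sC_p|^\delta$ chambers (a sparse, ``high-girth-like'' selection), one argues that with overwhelming probability no such short cylindrical configuration of chambers survives in the complex. This is a standard first-moment / Borel--Cantelli type computation in the density model: the expected number of cylinders of bounded combinatorial length closes up to zero as the approximating parameter $p \to \infty$, because forming a closed cylinder imposes enough ``coincidence'' constraints on the randomly chosen chambers to beat the number of potential configurations. One should be careful to handle cylinders of all lengths by a diagonal/compactness argument rather than a single length.

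Next comes unique $\diamond$-embeddability of the $\diamond$-strips and $\triangle$-strips. The key point is that these are 1-strips, so an embedding of such a strip into $X$ is determined locally: once a single lozenge $L$ inside the strip is pinned down, the $\theta_0$-extension property (Lemma \ref{L -NPC}, with $\theta_0 = \pi$) forces, step by step, the next piece along the strip, because consecutive pieces in a $\diamond$-strip or $\triangle$-strip meet at vertices contributing an angle $> \pi$. Thus in the Brady complex $X_0$ itself these strips are uniquely $\diamond$-embeddable, essentially by the same propagation arguments ($w$-blocks are ``forward analytic'', etc.) used throughout \textsection\ref{S - ring puzzles}. In the random quotient one must rule out that the sparse random identifications create a ``branch point'' where the strip could continue two different ways; again this is a probability estimate showing that the obstruction — two locally isometric but globally distinct continuations both surviving in the randomly chosen subcomplex — occurs with probability tending to zero. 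The technical assumption in Theorem \ref{Th - gromov Z2 puzzles} is precisely engineered to be this robust, and deterministic data coming from $\Aut(F_2)$ (as opposed to the $S_n$ data behind the small tori property) is what makes it hold.

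The main obstacle I expect is the second verification: controlling the unique $\diamond$-embeddability \emph{globally} in the random complex, not just locally. Locally, the extension property is deterministic, but a $\diamond$-strip or $\triangle$-strip is an infinite object, and one must ensure the deterministic forward-propagation never runs into a randomly created defect — and, symmetrically, that two parallel copies of the strip inside a flat cannot be ``glued'' in inequivalent ways by the random relations. This requires quantifying how the density-$\delta$ selection interacts with the combinatorics of strips in $X_0$, i.e. bounding the number of chambers that a length-$r$ portion of such a strip meets and showing the probability of a bad configuration decays faster than the number of such configurations grows, uniformly in $r$, then concluding by a union bound over $r$. Once these two probabilistic inputs are in place, the theorem follows immediately by invoking Theorem \ref{Th - gromov Z2 puzzles}, and the ``overwhelming probability, arbitrary $\delta$, arbitrary deterministic data'' strengthening is just a matter of tracking uniformity of the estimates in $p$ and $\delta$.
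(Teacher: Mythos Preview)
Your overall plan---reduce to Theorem \ref{Th - gromov Z2 puzzles} by verifying acylindricity and unique $\diamond$-embeddability---matches the paper, but you have misidentified where the work lies. You believe both verifications are probabilistic (``this is where the randomness enters the argument''); in fact the paper shows both are \emph{deterministic}, holding for \emph{every} subset $A\subset\sC_p$, random or not. This is precisely why the result is valid at arbitrary density without any union-bound gymnastics.

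For acylindricity: you already noted $X$ is CAT(0). That is the whole argument---a CAT(0) space contains no isometrically embedded flat cylinder (a closed local geodesic would be a global geodesic). No first-moment computation is needed. For unique $\diamond$-embeddability: the random space $X=\tilde X_A$ is by construction the universal cover of $X_A=X_0\setminus\bigcup_{C\in A}\stackrel{\circ}{C}$, a subspace of the Brady complex. Strips are simply connected, so embeddings $S\inj \tilde X_A$ project to embeddings $S\inj X_A\subset X_0$, and conversely any such map lifts \emph{uniquely} through a chosen basepoint. Given $f,f'\colon S\inj \tilde X_A$ with $f'=f\circ g$ on $L$, push down to $X_0$, use unique $\diamond$-embeddability there (the easy deterministic fact you correctly identified), then lift back uniquely. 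This covering-space transfer completely sidesteps the ``main obstacle'' you anticipated: there is no need to control infinite strips against random defects, because the strip never sees the randomness---it only sees the covering map, and simple connectivity of $S$ does the rest. Your proposed probabilistic route, with its uniform-in-$r$ union bound over strip lengths, is not obviously wrong, but it is substantially harder and unnecessary.
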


This may be compared to the randomized lattices in nonarchimedean Lie groups \cite{random}, for which the corresponding statement is open. 

\begin{proof}
Let $\G\acts X$ denote the action of the random group on its associated random classifying space. Since $X$ is nonpositively curved \cite{random} it is acylindrical. In order to apply Theorem \ref{Th - gromov Z2 puzzles} we have to check the unique extension property for $X$. This follows from a generalization of \cite[Lemma 11]{random} to strips, which are simply connected (see also  \cite[Lemma 13]{rd}.). Namely if $A\subset \sC_p$ is an arbitrary subset and, in the notation of \cite[Lemma 11]{random}, if
\[
S\subset X_A:=X_0- \bigcup_{C\in A}\stackrel \circ C
\] 
is a strip,  then the map $j:S\to \Pi\subset X_A$ lifts to unique map  $\tilde j :   S\to\tilde \Pi\subset \tilde X_A$ through any point $\tilde j(s)=\tilde x\in \tilde X_A$, $s\in S$ such that $\pi_A(\tilde x)=j(s)=:x$ and we have a commutative diagram
\[
\xymatrix{
 &\tilde X_A\ar[d]^{\pi_A}&\\
S\ar[r]_{j}\ar[ur]^{\tilde j}  & X_A}
\]   

We prove that the strip $S$ is uniquely $L$-embeddable in $X$ if it is so in  the Brady complex $X_0$. Since both the  $\diamond$-strip and the $\triangle$-strip are uniquely embeddable in $X_0$ (which is an easy exercise), the result will follow.  

Consider two simplicial embeddings $f,f'\colon S\inj \tilde X_A$ and a translation $g\colon S\to S$ 
\[
f'=f\circ g \text{ on } L. 
\]
We have to show that $f'=f\circ g$  on $S$. 

Let $j:=\pi_A\circ f\circ g$ and $j':=\pi_A\circ f'$. These are two embeddings $S\to X_A\subset X_0$ which coincide on $L$. By the the unique strip extension property for $S$ in $X_0$, we have that $j=j'$ on $S$. 

Let $s\in L$ and $\tilde x= f'(s)=f\circ g(s) \in f'(L)=f\circ g(L)$. There is a unique lift $\tilde j$ of $j$ such that $\tilde j(s)=\tilde x$ and a unique lift  $\tilde j'$ of $j'$ such that $\tilde j'(s)=\tilde x$ so  $\tilde j=\tilde j'$ on $S$ (since $j=j'$ on $S$). Applying uniqueness again $f\circ g=\tilde j$ on $S$ and $f'=\tilde j'$ on $S$. So $f'=f\circ g$  on $S$. 

This establishes the unique extension property for $\tilde X_A$ with respect to an arbitrary --- in particular for a random --- subset $A\subset \sC_p$.
\end{proof}

The $\Aut(F_2)$ models differ from the nonarchimedean lattices models of \cite{random} in many respects. 
  What we prove below is  that for suitable choices of  the deterministic data, the presence of periodic flats persists at \emph{arbitrary} densities $\delta\in (0,1)$. 
  This relies on the fact that  the following ``small tori property'' can be achieved for $\Aut(F_2)$ models, for example for deterministic data associated with the symmetric group, using superexponential growth.

\begin{definition}\label{D - small tori property} 
We say that the deterministic data $(\G_0,\{\G_p\},X_0)$  has the \emph{small tori property} (in the density model) if for any $p\geq 1$ there exists an embedding $\ZI^2\inj \G_0$ and a simplicial embedding $\IR^2\inj X_0$ with $\ZI^2\acts\IR^2$ (freely and cocompactly) such that
\[
\forall \delta>0, \ |(\IR^2)^{(2)}/(\ZI^2\cap \G_p)|\leq O(|X^{(2)}/\G_p|^\delta).
\]
where $^{(2)}$ indicates  2-faces.\end{definition}

This is easily seen to be an algebraic condition on the growth of the finite index subgroups relative to the embedding of $\ZI^2$, at least if $X$ is a CAT(0) space (see the proof of Theorem \ref{T - arbitrary density}).

The persistence of periodic flat planes at arbitrary density  will follow from:

\begin{lemma}\label{L - small tori arbitrary density}
If $(\G_0,\{\G_p\},X_0)$ has the small tori property, where $X_0$ is a CAT(0) space, then $\ZI^2$ embeds in the random group at arbitrary density $\delta\in (0,1)$ with overwhelming probability.
\end{lemma}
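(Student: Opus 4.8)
The plan is to show that for a fixed $p$, the periodic flat $\IR^2 \inj X_0$ provided by the small tori property survives the random removal of free faces at any density $\delta \in (0,1)$ with overwhelming probability, and then push this down to an embedding $\ZI^2 \inj \G$. First I would recall that the random group $\G$ at density $\delta$ is obtained (in the notation of \cite{random}) by selecting a random subset $A \subset \sC_p$ of size $|\sC_p|^\delta$ and contracting the free faces in $X_A = X_0 - \bigcup_{C\in A}\mathring C$; the key point is that $\G$ (and its classifying space $X$) retains a copy of any sub-complex of $X_0$ that is disjoint from all the chambers chosen in $A$. So the heart of the matter is a first-moment/union-bound estimate: the probability that the random set $A$ meets the (finite) set of $\G_p$-orbits of faces of the periodic plane $\IR^2$.

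Next I would carry out that estimate. Write $N := |X_0^{(2)}/\G_p| = |\sC_p|$ for the number of $\G_p$-orbits of $2$-faces, and $k := |(\IR^2)^{(2)}/(\ZI^2\cap\G_p)|$ for the number of orbits of faces in the periodic plane. Since $A$ is a uniformly random $N^\delta$-subset of an $N$-element set, the probability that $A$ avoids a fixed $k$-element subset is $\binom{N-k}{N^\delta}\big/\binom{N}{N^\delta} \geq \bigl(1 - N^\delta/(N-k)\bigr)^k \geq 1 - k N^{\delta-1}(1+o(1))$, which tends to $1$ provided $k = o(N^{1-\delta})$. The small tori property gives exactly $k \leq O(N^{\delta'})$ for every $\delta' > 0$; choosing $\delta' < 1 - \delta$ (possible since $\delta < 1$) yields $k N^{\delta - 1} \to 0$, hence the plane is disjoint from $A$ with overwhelming probability.

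Then, on the event that the periodic plane avoids $A$, I would argue that the copy of $\IR^2$ in $X_0$ descends to a flat in $X$: contracting free faces does not touch the plane (its faces are not removed, and — one must check — its edges do not become free in $X_A$, which follows since the plane is itself a flat and the ring conditions of type $\Aut(F_2)$ prevent the link of a vertex on the plane from being a free path; alternatively one can pass to $\tilde X_A$ as in the proof above and note that a flat lifts). The group $\ZI^2$ acts on this plane, and since the action $\G \acts X$ is free on the relevant stabilizers (CAT(0), so the flat torus theorem applies as noted in the excerpt), the image of $\ZI^2 \inj \G_0$ survives to give $\ZI^2 \inj \G$. Since this holds for every $p$ with overwhelming probability, it holds in the random group $\G$.

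The main obstacle I expect is the descent step: verifying carefully that the periodic flat in $X_0$, once it is disjoint from the removed chambers $A$, genuinely gives rise to an \emph{embedded} flat in the quotient/classifying space $X$ rather than merely an immersed or locally isometric plane, and that the deck-transformation action of $\ZI^2$ is faithfully realized inside $\G$ (not collapsed by the relations introduced in the randomization). This is where one must invoke the precise form of the construction in \cite{random} — in particular that the relations of $\G$ are supported on the chambers of $A$ and hence act trivially near a plane disjoint from $A$ — together with the fact that $X_0$ is CAT(0) so that $\Stab_\G$ of a point on the flat is finite and the flat torus theorem converts the plane into a $\ZI^2$ subgroup. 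The probabilistic estimate itself is routine once the small tori bound $k = O(N^{\delta'})$ for all $\delta' > 0$ is in hand.
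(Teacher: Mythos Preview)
Your proposal is correct and matches the paper's own proof: both reduce to the same first-moment estimate that a random set of $|\sC_p|^\delta$ chambers avoids the $O(|\sC_p|^{\delta'})$ orbits of faces in the periodic torus (with $\delta'<1-\delta$), and both defer the descent step to the construction in \cite{random} (specifically its Lemma~11). The only cosmetic discrepancy is that the density model samples chambers \emph{independently} (with replacement), so the avoidance probability is $(1-|F_p'|/|\sC_p|)^{|\sC_p|^\delta}$ rather than your hypergeometric expression, but the asymptotics are identical.
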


The proof is  extracted from \cite[\textsection 6]{random}.

\begin{proof}
 Let $\delta\in (0,1)$ be given. Let $\sC_p$ be the set of $\G_p$-orbits of 2-cells in $X_0$ (which can be triangles or lozenges)
and let $F_p$ be the set of  $\ZI^2\cap \G_p$ orbits of 2-cells in $\IR^2$ viewed as a flat plane in $X_0$. 

Let $\alpha<1-\delta$. By the small tori property we have
\[
 \ |F_p|\leq O(|X^{(2)}/\G_p|^{\alpha}).
\]
Write $F_p'$ for the image of $F_p$ in $\sC_p$ so
\[
 \ |F_p'|\leq C |\sC_p|^{\alpha}
\]
for some constant $C>0$.

Consider a sequence $Y_1,Y_2,Y_3,\ldots$ of i.i.d.\ random variables with values in $\sC_p$.
The probability $\IP(E_p)$ of the event:

\[
E_p= \{Y_i\notin F_p', \; \forall i\leq |\sC_p|^\delta\}
\]
satisfies
\[
\IP(E_p)\geq \left(1-{|F_p'|\over |\sC_p|}\right)^{|\sC_p|^\delta}
\geq  e^{-2|\sC_p|^{\delta-1}|F_p'|}\geq e^{-2C|\sC_p|^{\delta-1+\alpha}}
\] 
so $\IP(E_p)\to 1$. Applying Lemma 11 in \cite{random} we have that $\ZI^2\inj \G$ with overwhelming probability at density $\delta$.
\end{proof}

This can be applied to the case of $\Aut(F_2)$ in the following way.
Write $F_2=\langle x, y\rangle$ and define automorphisms $\alpha,\beta\in \Aut(F_2)$  
\[
\alpha\colon x\to x,\  y\to x^2y,\ \beta\colon x\to xy^2,\ y\to y
\]
so
\[
\bar \alpha=\matr 1  2  0  1 \text{ and } \bar \beta=\matr 1 0 2 1
\]
where $\bar{\alpha}$, $\bar{\beta}$ are the reduction  relative to the short exact sequence  
\[
\mathrm{IA}_2\inj  \Aut(F_2)\stackrel{\bar{}}{\surj} \GL_2(\ZI).
\]
 $\mathrm{IA}_2= \Inn(F_2)=\langle \alpha_0,\beta_0\rangle \simeq F_2$ is the subgroup of inner automorphisms:
 \[
\alpha_0\colon x\to x,\  y\to xyx^{-1},\ \beta_0\colon x\to yxy^{-1},\ y\to y.
\]
 It is well--known that $\langle\bar \alpha, \bar\beta\rangle\simeq F_2$ so we have a split exact sequence
\[
\mathrm{IA}_2\inj \mathrm{IA}_2\rtimes \langle \alpha,\beta\rangle\surj \langle \bar \alpha,\bar \beta\rangle.
\]

\begin{theorem}[Periodic flat planes at arbitrary density] \label{T - arbitrary density}
Let $A_p\normal F_2$ be a characteristic subgroup of $F_2$ and let $B_p \normal F_2$ be a normal subgroup such that $F_2/B_p$ is the  group $S_p$ of permutations of the set with $p$ elements. Suppose that $[F_2:A_p]\leq e^{O(p)}$. If the deterministic data $(\G_0,\{\G_p\},X_0)$ satisfy: 
\begin{itemize}
\item $\G_0=\Aut(F_2)$
\item $X_0$ is the Brady complex
\item $\G_1=\mathrm{IA}_2\rtimes \langle \alpha,\beta\rangle$ (a subgroup of index 24 in $\G_0$)
\item for $p\geq 2$, we have $\G_p:=A_p\rtimes B_p$ where we view $A_p$ as a subgroup of $\mathrm{IA}_2$ and $B_p$ as a subgroup of $\langle \bar \alpha,\bar \beta\rangle$.
\end{itemize}
Then  with overwhelming probability $\ZI^2\inj \G:=$ the random group at density $\delta\in (0,1)$.
\end{theorem}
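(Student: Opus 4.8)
The plan is to verify that the deterministic data $(\G_0,\{\G_p\},X_0)$ described in the statement satisfies the \emph{small tori property} of Definition \ref{D - small tori property}, and then invoke Lemma \ref{L - small tori arbitrary density} directly (the Brady complex is CAT(0), so that lemma applies). The conclusion $\ZI^2\inj \G$ at arbitrary density $\delta\in(0,1)$ with overwhelming probability is then immediate. So the entire content of the proof is the growth estimate packaged in the small tori property.

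First I would exhibit the relevant $\ZI^2$. The split exact sequence $\mathrm{IA}_2\inj \mathrm{IA}_2\rtimes\langle\alpha,\beta\rangle\surj\langle\bar\alpha,\bar\beta\rangle$ together with the commuting pairs $\langle\alpha_0\rangle\times\langle\bar\alpha\rangle$ and $\langle\beta_0\rangle\times\langle\bar\beta\rangle$ (inner automorphism conjugated by the corresponding $\GL_2$ element) produces a copy of $\ZI^2$ inside $\G_1=\mathrm{IA}_2\rtimes\langle\alpha,\beta\rangle\leq\Aut(F_2)$; concretely $\langle \alpha_0\alpha^{-1}\cdot(\text{correction}),\ \ldots\rangle$ — I would pin down the two commuting elements $u,v$ generating $\ZI^2$ using that $\alpha$ normalizes $\langle\alpha_0\rangle$ and $\beta$ normalizes $\langle\beta_0\rangle$. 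Since $\Aut(F_2)$ acts properly cocompactly on the Brady complex $X_0$, this $\ZI^2$ acts freely (it is torsion-free and acts properly) and cocompactly on a flat plane $\IR^2\inj X_0$ (by the flat torus theorem, as $X_0$ is CAT(0)). This gives the simplicial embedding $\IR^2\inj X_0$ with $\ZI^2\acts\IR^2$ required in the definition.

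Next I would estimate the number of $2$-faces of the flat modulo $\ZI^2\cap\G_p$. The point is that $\ZI^2$ lies in $\G_1$, and for $p\geq 2$ we have $\G_p=A_p\rtimes B_p$ with $A_p$ characteristic in $F_2=\mathrm{IA}_2$ of index $[F_2:A_p]\leq e^{O(p)}$ and $B_p$ of index $[F_2:B_p]=|S_p|=p!$. Intersecting $\ZI^2$ with $\G_p$ is controlled by these two indices: the $\langle\alpha_0\rangle$-direction is cut down by the index of $A_p$ in $\mathrm{IA}_2\cong F_2$ (so by at most $e^{O(p)}$) and the $\langle\bar\alpha\rangle$-direction by the index of $B_p$ in $\langle\bar\alpha,\bar\beta\rangle\cong F_2$ (so by at most $p!$). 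Hence $|(\IR^2)^{(2)}/(\ZI^2\cap\G_p)|\leq O\big([\ZI^2:\ZI^2\cap\G_p]\big)\leq O(e^{O(p)}\cdot p!)= e^{O(p\log p)}$. On the other hand $|X_0^{(2)}/\G_p|$ is, up to a multiplicative constant (the number of $\G_0$-orbits of faces), equal to $[\G_0:\G_p]$, which is $24\cdot[F_2:A_p]\cdot[F_2:B_p]\geq \big(p!\big)^2$-ish; crucially its logarithm is $\Omega(p!)=\Omega(e^{p\log p})$ by the $B_p=S_p$ factor and superexponential growth of $p!$. Therefore $\log|(\IR^2)^{(2)}/(\ZI^2\cap\G_p)| = O(p\log p)$ while $\log|X_0^{(2)}/\G_p|=\Omega(p!)$, so the ratio of logarithms tends to $0$, i.e.\ for every $\delta>0$ eventually $|(\IR^2)^{(2)}/(\ZI^2\cap\G_p)|\leq O(|X_0^{(2)}/\G_p|^\delta)$. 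This is exactly the small tori property.

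The main obstacle is the second step: making precise and correct the claim that $[\ZI^2:\ZI^2\cap\G_p]$ is bounded by (a constant times) the product of the relevant one-dimensional indices, and that $A_p$ being characteristic and $B_p$ normal genuinely force $\G_p\cap\G_1$ to split compatibly with the decomposition $\G_1=\mathrm{IA}_2\rtimes\langle\alpha,\beta\rangle$ so that $\ZI^2\cap\G_p$ is a finite-index subgroup of $\ZI^2$ of index at most $[\mathrm{IA}_2:A_p]\cdot[\langle\bar\alpha,\bar\beta\rangle:B_p]$. Once this index bound is in hand, the comparison with $[\G_0:\G_p]$ and the superexponential growth of $p!$ is routine, and Lemma \ref{L - small tori arbitrary density} finishes the argument. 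One should also note $\ZI^2\cap\G_p$ is nontrivial (indeed finite index in $\ZI^2$) so that the flat it preserves is genuinely periodic, which is what Lemma \ref{L - small tori arbitrary density} (via Lemma 11 of \cite{random}) needs.
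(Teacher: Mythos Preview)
Your overall strategy is the paper's: verify the small tori property and apply Lemma~\ref{L - small tori arbitrary density}. The copy of $\ZI^2$ is also essentially the right one (the paper simply takes $\Lambda=\langle\alpha_0,\alpha\rangle$, noting that $\alpha$ fixes $x$ and hence commutes with conjugation by $x$). But your growth comparison contains a fatal error. You assert $\log|X_0^{(2)}/\G_p|=\Omega(p!)$; in fact $|X_0^{(2)}/\G_p|$ is proportional to $[\G_0:\G_p]=24\cdot[F_2:A_p]\cdot p!$, whose \emph{logarithm} is $\Theta(p\log p)$, not $\Omega(p!)$. With your numerator bound $[\ZI^2:\ZI^2\cap\G_p]\leq e^{O(p)}\cdot p!=e^{O(p\log p)}$, both logarithms are of order $p\log p$ and their ratio need not tend to $0$; the argument collapses. (The ``$\geq (p!)^2$-ish'' is also wrong: the hypothesis only says $[F_2:A_p]\leq e^{O(p)}$, so the denominator is at most $e^{O(p)}\cdot p!$.)

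What saves the situation, and what you are missing, is a much sharper bound in the $\langle\alpha\rangle$-direction. The index $[\langle\alpha\rangle:\langle\alpha\rangle\cap B_p]$ equals the order of the image of $\alpha$ in $F_2/B_p\simeq S_p$, and the maximal order of an element of $S_p$ is Landau's function $g(p)$, which satisfies $\ln g(p)\sim\sqrt{p\ln p}$; in particular $g(p)\leq e^{O(p)}$. Hence $[\Lambda:\Lambda\cap\G_p]\leq e^{O(p)}\cdot g(p)=e^{O(p)}$, while $[\G_0:\G_p]\geq p!\geq(p/e)^p$ by Stirling, so the logarithms compare as $O(p)$ against $\Omega(p\log p)$ and the small tori property follows. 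This use of the Landau function is the genuine content of the proof; the crude bound $p!$ on a cyclic index, together with mistaking $p!$ for its logarithm on the other side, is exactly what prevents your version from going through.
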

 
\begin{proof}
Since $X_0$ is a CAT(0) space the triple $(\G_0,\{\G_p\},X_0)$ has the small tori property if and only if there exists an embedding $\ZI^2\inj \G$ such that 
\[
\forall \delta>0, \ |\ZI^2/(\ZI^2\cap \G_p)|\leq O(|\G/\G_p|^\delta).
\]
We have 
\[
|\langle \alpha_0\rangle/(\langle \alpha_0\rangle\cap A_p)|\leq e^{O(p)}, \text{ and similarly } |\langle \beta_0\rangle/(\langle \beta_0\rangle\cap A_p)|\leq e^{O(p)}
\]
by assumption and
\[
|\langle \alpha\rangle/(\langle \alpha\rangle\cap B_p)|\leq g(p), \text{ and similarly } |\langle \beta\rangle/(\langle \beta\rangle\cap B_p)|\leq g(p)
\]
where 
\[
g(p):=\text{ maximal order of an element in }S_p 
\]
is the Landau function (see Wikipedia). Note that the value $g(p)$ can be indeed achieved in our situation \cite[\textsection 2.63]{Dixon}. 

Landau has proved that 
\[
\lim_{p\to\infty} \frac{\ln g(p)}{\sqrt{p\ln p}}=1
\]
therefore
\[
|\langle \alpha\rangle/(\langle \alpha\rangle\cap B_p)|\leq e^{O(p)}, \text{ and similarly } |\langle \beta\rangle/(\langle \beta\rangle\cap B_p)|\leq e^{O(p)}.
\]

If $\Lambda\simeq \ZI^2$ denotes the subgroup of $\Aut(F_2)$ generated either by $\alpha_0,\alpha$ or by $\beta_0,\beta$ then we have 
\[
|\Lambda/(\Lambda\cap \G_p)|\leq e^{O(p)}.
\]
On the other hand by Stirling's approximation 
\[
|\G/\G_p|\geq p!\geq \left( \frac p e\right )^p. 
\]
This shows that
\[
\forall \delta>0, \ |\Lambda/(\Lambda\cap \G_p)|\leq O(|\G/\G_p|^\delta).
\]
Then Lemma \ref{L - small tori arbitrary density} applies. 
\end{proof}

\begin{remark} 1) $\Aut(F_2)$ has many interesting finite quotients leading to interesting phase transitions in the corresponding density model. We hope to come back to this elsewhere. 

2) The variation of the small tori property which is implicit in \cite{random} is the assertion that there is $\alpha\geq 0$ such that for any $p\geq 1$ there are $\ZI^2\inj \G_0$ and $\IR^2\inj X_0$ with $\ZI^2\acts\IR^2$ freely and cocompactly such that
\[
\forall \delta>\alpha, \ |(\IR^2)^{(2)}/(\ZI^2\cap \G_p)|\leq O(|X^{(2)}/\G_p|^\delta).
\] 
\end{remark}

 \section{The explicit construction}\label{S - explicit examples 2}

The construction provides a non positively curved simplicial complex $X$ and a group $\G$ acting freely on $X$ with 3 orbits of vertices. 

The link at every vertex is a ``double'' of the link of the Brady complex as explicited on the following drawing (in red and black). See \cite[Fig.\ 6]{CP} for the link of the Brady complex itself. 

\[
\includegraphics[width=10cm]{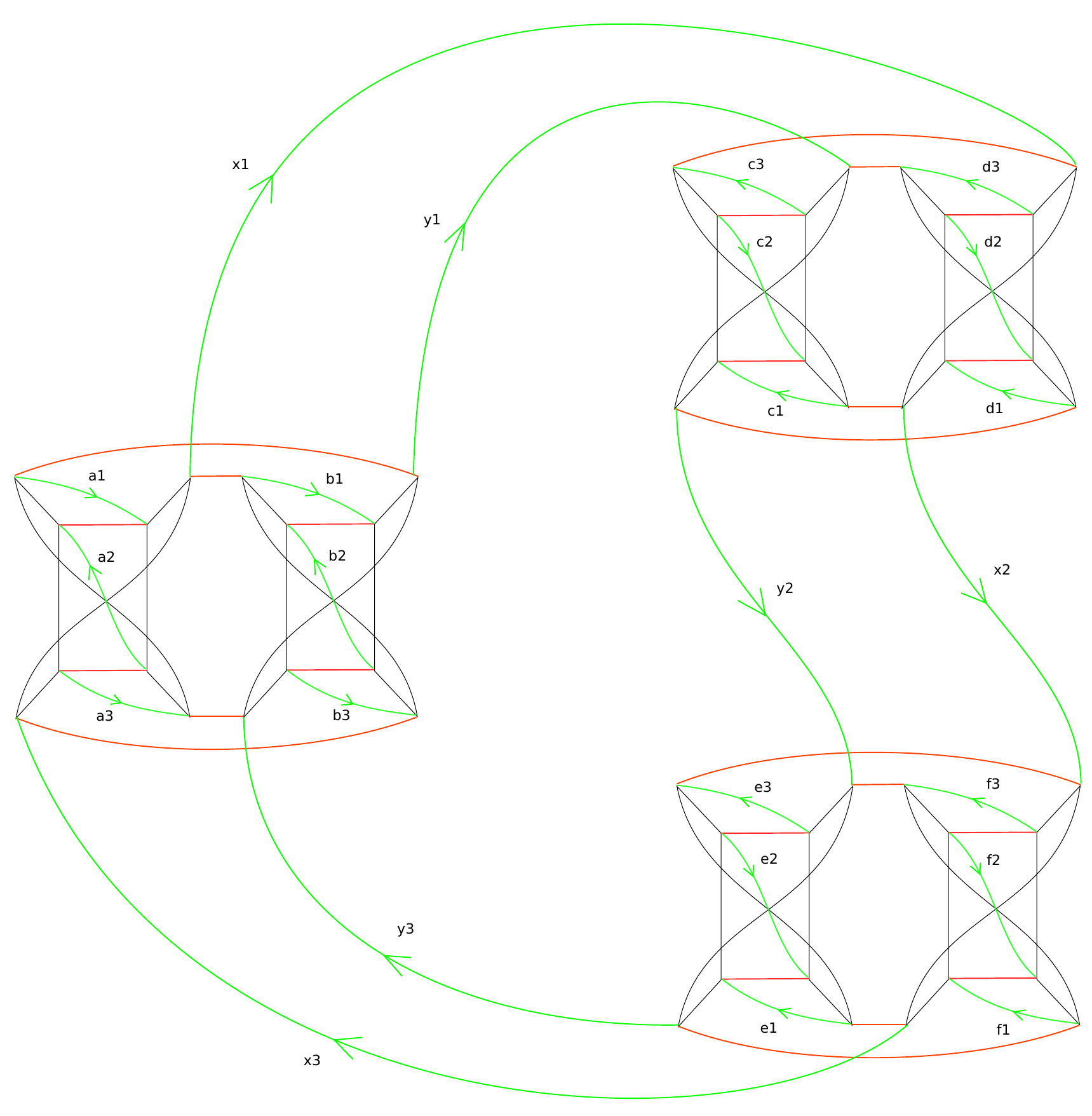}
\]

The figure shows $X/\G$. The 1-skeleton (in green) of $X/\G$ 
supports the following set of faces:

\begin{itemize}
\item[] triangles: $\bullet_1\bullet_2\bullet_3$ where $\bullet=a,b,c,d,e,f,x,y$
\item[] lozenges: $\bullet_1\bullet_2'\bullet_3'\bullet_2$, where $\bullet=a,b,c,d,e,f$ and $x_1d_3y_1'b_1'$, $x_1c_3'y_1'a_1$, $x_2f_3y_2'c_1'$, $x_2e_3'y_2'd_1$, $x_3a_3y_3'e_1'$, $x_3b_3'y_3'f_1$.
\end{itemize}

It is easy to see that the  complex $X$ thus constructed is a ring complex of type $\Aut(F_2)$. Further inspection shows that $X$ fails to be hyperbolic and has the isolated flats property. Every link contains 8 cycles of length $2\pi$ and the corresponding puzzles are of type A.  

This group appears to be an intriguing sort of ``double'' of $\Aut(F_2)$.

\end{document}